\def\bb#1\eb{\textcolor{blue}
{#1}} %
\def\br#1\er{\textcolor{red}
{#1}} %
\newcommand{\df}{{\rm d}}
\newcommand{\dist}{{\rm d}}
\newcommand{\R}{\mathds R}
\newcommand{\N}{\mathds N}
\newcommand{\Z}{\mathds Z}
\newcommand{\be}{\begin{equation}}
\newcommand{\ee}{\end{equation}}
\def\ri#1{\mathring{#1}}
   \def\br#1\er{\textcolor{red}{#1}} %
      \def\bb#1\eb{\textcolor{blue}{#1}} %
\title[On Finsler metrics]{On the definition and examples\\ of Finsler metrics}
\author[M. A. Javaloyes]{Miguel Angel Javaloyes}
\address{Departamento de Matem\'aticas, \hfill\break\indent
Universidad de Murcia, \hfill\break\indent
Campus de Espinardo,\hfill\break\indent
30100 Espinardo, Murcia, Spain}
\email{majava@um.es}
\author[M. S\'anchez]{Miguel S\'anchez}
\address{Departamento de Geometr\'{\i}a y Topolog\'{\i}a, Facultad de Ciencias, \hfill\break\indent
 Universidad de Granada,\hfill\break\indent
 Campus Fuentenueva s/n,
 \hfill\break\indent 18071 Granada, Spain}
\email{sanchezm@ugr.es}
\date{21.11.2011}
\thanks{2010 {\em Mathematics Subject Classification:} Primary  53C60, 53C22 \\
\textbf{Key words:} Finsler metrics, Matsumoto, Randers and
Kropina metrics, $(\alpha.\beta)$-metrics, geodesics, strong
convexity, pseudo-Finsler and conic Finsler metrics.}
\begin{document}
\newtheorem{thm}{Theorem}[section]
\newtheorem{prop}[thm]{Proposition}
\newtheorem{lemma}[thm]{Lemma}
\newtheorem{cor}[thm]{Corollary}
\theoremstyle{definition}
\newtheorem{defi}[thm]{Definition}
\newtheorem{notation}[thm]{Notation}
\newtheorem{exe}[thm]{Example}
\newtheorem{conj}[thm]{Conjecture}
\newtheorem{prob}[thm]{Problem}
\newtheorem{rem}[thm]{Remark}
\newtheorem{conv}[thm]{Convention}
\newtheorem{crit}[thm]{Criterion}

\begin{abstract}
For a standard Finsler metric $F$ on a manifold $M$, its domain is
the whole tangent bundle $TM$ and its fundamental tensor $g$ is
positive-definite. However, in many cases (for example, the
well-known Kropina and Matsumoto metrics), these two conditions
are relaxed, obtaining then either a {\em pseudo-Finsler metric}
(with arbitrary $g$) or a {\em conic Finsler metric} (with domain
a ``conic'' open domain of $TM$).

Our aim is twofold. First, to give an account of quite a few
subtleties which appear under such generalizations, say, for {\em
conic pseudo-Finsler metrics} (including, as a previous step, the
case of {\em Minkowski conic pseudo-norms} on an affine space).
 Second, to provide some criteria  which determine when a
pseudo-Finsler metric $F$ obtained as a general homogeneous
combination of Finsler metrics and one-forms is again a Finsler
metric ---or, with more accuracy, the conic domain where $g$
remains positive definite. Such a combination generalizes the
known $(\alpha,\beta)$-metrics in different directions.
Remarkably, classical examples of
Finsler metrics 
are reobtained and extended, with explicit computations of their
fundamental tensors.
\end{abstract}

\maketitle

\newpage
\tableofcontents
\newpage

\section{Introduction}

Finsler Geometry is a classical branch of Differential Geometry,
with a well-established background. However, its applications to
Physics and other natural sciences (see, for example, \cite{AIM93,
Asa85, Mat89b, Ran41, SSI08}) introduced some ambiguity in what is
understood as a Finsler manifold. The  standard  definition of a
Finsler metric $F$ on a manifold $M$ entails that $F$ is defined
{\em on the whole tangent bundle} $TM$ and that {\em strong
convexity} is satisfied, i.e., its fundamental tensor $g$ is
positive definite \cite{AP,Akbar-Zadeh06,BaChSh00,ChSh05,Mo06,shen2001,Sh01}.
In fact, the non-degeneracy of $g$ is essential for many purposes
which concern connections and curvature, and its
positive-definiteness implies the fundamental inequality, with
important consequences on the associated Finsler distance,
minimizing properties of geodesics, etc.

However, in many interesting cases, the metric $F$ is defined only
in some {\em conic domain} $A\varsubsetneq TM$; recall, for
example, Matsumoto \cite{Mat89b} or Kropina metrics
\cite{Kro59,Mat72,Shi78},  or some metrics constructed as in the
Randers case \cite[Chapter 11]{BaChSh00} (see also
Corollary~\ref{randers} below). Typically, this happens when $F$
is not positive-definite or  smoothly inextendible in some
directions and, so, these directions are removed from the domain
of $F$. Many references consider as the essential ingredient of a
Finsler metric $F$ that it behaves as a (positively homogeneous)
pointwise norm in some conic open subset $A\subset TM$
\cite{AIM93,Asa85,BeFa2000,Br,Mat86}. In
this case, the role of the positive-definiteness of $g$ may remain
somewhat vague. In principle, one admits the {\em pseudo-Finsler}
case when $g$ is allowed to be non-positive definite. If the
non-degeneracy of $g$ is required, then one can redefine $A$ so
that degenerate directions are also suppressed. If, moreover,
 positive-definiteness is required, $A$ will contain only those
directions where this property holds. However, notice that it is
important then to have criteria which determine exactly the domain
$A$, as well as the expected properties for $F$ and $g$ in each
case.

 The first aim of this work is to  study these cases
by making two basic extensions of the  standard  notion of Finsler
metric: {\em pseudo-Finsler metrics} (if one admits that its
fundamental tensor $g$ can be non-positive definite) and {\em
conic Finsler metrics} (if one admits that the domain of $F$ is not
necessarily the whole $TM$ but an open conic domain); if both
extensions are done at the same time we speak of a {\em conic
pseudo-Finsler metric}.  Once the general properties of
these extensions are studied, our next aim is to determine the
regions where strong convexity holds for natural classes of conic
pseudo-Finsler metric. Recall that
 one of the distinctive aspects of Finsler geometry, in
 comparison with
the Riemannian one, is how Finsler metrics (and one-forms) can be
combined to obtain new Finsler metrics.  In Riemannian
geometry the natural operations are just the addition and
conformal changes. But in Finsler geometry there is a big amount
of possibilities providing endless families of Finsler metrics
(see Section \ref{ls3}).  As a remarkable goal, we compute
the fundamental tensor $g$ of these combinations explicitly, in
such a way that the domain where $g$ is positive definite  becomes
apparent.  The paper is divided then in three sections.

In Section \ref{s1}, our study starts at the very beginning, by
discussing accordingly the extensions of  Minkowski norms into
{\em Minkowski pseudo-norms} and {\em Minkowski conic norms}
---or, in general, {\em Minkowski conic pseudo-norms}. After a
preliminary review of properties of Minkowski norms in the first
subsection, in the second one we focus in their generalizations.
As the weakening of these properties may yield the loss of the
triangle inequality, we study at the same time norms and norms
which satisfy the strict triangle inequality (including a
discussion about the weakening of differentiability in
Proposition \ref{pcontinuousnorms}, Remark
\ref{rcontinuousnorms}). In the third subsection we show how all
these norm-related notions are characterized by looking at the
corresponding unit  (conic) sphere or indicatrix $S$, which allows one to
reconstruct the norm from  suitable candidates to unit
(conic) ball $B$
 (Proposition \ref{pfunda}, Theorem \ref{tfunda}).  The interplay between the
 convexities of $S$, $B$ and the conic domain $A$ of the Minkowski conic pseudo-norm is stressed.
 Finally, in the fourth subsection, simple examples
 (constructed on $\R^2$ from a curve which  plays the role of $S$)
show the optimality of the results.
 We stress in both, results and examples, that even though the triangle
inequality may not hold for Minkowski pseudo-norms, their forward (resp. backward)
affine balls still constitute a basis of the topology, as a
difference with the conic case (Proposition \ref{ppseudi}).
However, for conic Minkowski norms, the sets of all the forward
and backward affine balls generate the topology as a subbasis
(Proposition \ref{opennormballs}), and suitable triangle
inequalities occur when the conic domain $A$ is convex.

In Section \ref{s2}, the general properties of a conic
pseudo-Finsler metric $F$, as well as those for the particular
pseudo-Finsler and conic Finsler cases, are studied. It is divided
into five subsections with, respectively, the following aims: (1)
A discussion on the general definition, including the subtleties
inherent to our general choice of conic domain $A\subset TM$ and
the possibility of extending it to all $TM$. (2) To introduce the
notion of {\em admissible curve} (with velocity in $A$) and,
associated to it: the conic partial ordering $\prec$, the {\em
Finsler separation $d_F$} (which extends the classical Finsler
distance), and the open  forward and backward balls, which
are shown to be always topologically open subsets but, in general,
they do not constitute a basis for the topology (in fact, $d_F$
may be identically zero in the conic case). So, the condition of
being Riemannianly lower bounded for $F$ is introduced to guarantee
that the forward (resp. backward) balls give a basis of the topology. (3) A
discussion on the role of the open balls in this general
framework. In particular, we show that, in the pseudo-Finsler
case, the open forward balls still constitute a basis of the
topology, and the Finsler separation $d_F$ is still a generalized
distance. However, we stress that the corresponding
$d_F$-balls (obtained as a length space) may look very different
from the Finsler case
---in particular, they differ from the affine balls for Minkowski
pseudo-norms whenever its fundamental tensor $g$ is not
positive semi-definite. (4) To introduce geodesics as critical
points of the energy functional for admissible curves joining two
fixed points. In the non-degenerate directions they are related to
the Chern connection, and characterized by means of the geodesic
equation (univocally determined from their velocity at some
point); otherwise, their possible lack of uniqueness becomes
apparent (see Example \ref{ex2a}). Their
minimization properties only appear in the conic Finsler case, and
some subtleties about them are especially discussed. (5) A final
summary is provided, making a comparison between the Finsler,
pseudo-Finsler and conic Finsler cases at three levels: (i)
Finsler separation $d_F$, (ii) geodesics, and (iii) the particular
affine Minkowski case.

In Section \ref{ls3},
 we study a general homogeneous functional combination of
$n$ (conic) Finsler metrics $F_1, \dots , F_n$ and $m$ one-forms
$\beta_{n+1}, \dots , \beta_{n+m}$. Such a $(F_1, \dots , F_n,$
$\beta_{n+1}, \dots , \beta_{n+m})$-metrics constitute an obvious
generalization of the known $(\alpha, \beta)$-metrics (see
\cite{Mat72} and also
\cite{KAM95,MSS10,Mat89,Mat91b,Mat91c,Mat92,SSI08}) and
$\beta$-deformations \cite{Rom06}. In the first subsection, we
compute explicitly its fundamental tensor $g$ and derive general
conditions to ensure that $g$ is positive definite in Theorem
\ref{central}. Then, some simple particular cases (Corollaries
\ref{sum} and \ref{rrandersr}) and consequences (Remark
\ref{consequences} and Corollary \ref{cisom}) are stressed. In the
second subsection we focus on the simple case $n=1=m$. The metric
$F$ obtained in this case coincides with the so-called
$\beta$-deformation of a preexisting Finsler metric $F_0$, and it
is called here a {\em $(F_0,\beta)$-metric}.  In particular,
classical Matsumoto, Randers and Kropina metrics are reobtained
and extended, including the explicit computation of their
fundamental tensor $g$, as well as the conic domain for its
positive-definiteness. As an example of the possibilities of our
approach, we conclude with a further extension to metrics
constructed from a pair $(F_1,F_2)$ of Finsler ones. Finally, as
an Appendix, in the third subsection,  our computations are
compared with those by Chern and Shen \cite{ChSh05} for
$(\alpha,\beta)$-metrics.

Finally, we would like to emphasize that, even though our study is
quite extensive, it is not by any means exhaustive. There are
still natural questions related to the conic pseudo-Finsler
metrics which have not been well studied yet and, as some of the
ones studied in this paper, may have a non-trivial and subtle
answer. Due to the increasing activity in this field, we encourage
 the readers to make further developments.

\section{Generalizing Minkowski norms}\label{s1}

\subsection{Classical notions} First, let us recall the following classical concepts.

\begin{defi}\label{dnorms} Let $V$ be a real
vector space  of finite dimension $N\in \N$ and consider a
function $\|\cdot\|:V\rightarrow\R$ being

\begin{enumerate}[(i)]

\item positive: $\| v \| \geq 0$, with equality  if  only
if $v=0$,

\item positively homogeneous: $\| \lambda v \| =
\lambda \|v\|$ for all  $\lambda > 0$.
\end{enumerate}
\smallskip

\noindent Then, $\| \cdot \|$ is called:
\begin{enumerate}[(a)]
\item A {\em positively homogeneous norm}, or simply {\em
norm},  if it satisfies the triangle inequality, i.e.: $\| v+w
\| \leq \| v \| + \| w \|$.

\item A {\em norm with strict triangle inequality}, if the equality
in the triangle inequality is satisfied only if $v=\lambda w$ or
$w=\lambda v$ for some $\lambda\geq 0$.

\item A {\em Minkowski norm}, if: (c1) $\| \cdot \|$ is
smooth\footnote{ For simplicity, ``smooth'' will mean
$C^\infty$ even though differentiability $C^2$ will be enough for
most  purposes. } away from $0$, so that the {\em fundamental
tensor field} $g$ of $\| \cdot \|$ on $V\setminus\{0\}$ can be
defined as the Hessian of $\frac 12 \| \cdot \| ^2$,  and (c2) $g$
is pointwise positive-definite.
\end{enumerate}
\end{defi}
About the last definition, recall  that the Hessian  of any smooth
function $f$ is written as Hess$f(X,Y)=X(Y(f))-\nabla^0_XY(f)$ for
any vector fields $X,Y$, where $\nabla^0$ is the natural affine
connection of $V$. Moreover,
Minkowski norms always satisfy the strict
triangle inequality (see for example \cite[Theorem 1.2.2]{BaChSh00}).

Let us collect some properties of the fundamental tensor $g$ that
come directly from its definition and the positive homogeneity.
\begin{prop}\label{fundamentalprop}
Given a Minkowski norm $\|\cdot\|$ and $v\in V\setminus\{0\}$, the
fundamental tensor $g_v$ is given as:

\begin{equation}\label{eg} g_v(u,w):=
\frac{\partial^2}{\partial t\partial s}
G(v+tu+sw)|_{t=s=0},\end{equation} where $u,w\in V$ and
$G=\frac 12\|\cdot\|^2$. Moreover, $v\mapsto g_v$ is positively
homogeneous of degree 0 (that is, $g_{\lambda v}=g_v$ for $\lambda>0$) and it
satisfies
\begin{align}\label{propfundtensor}
g_v(v,v)&=\|v\|^2,& g_v(v,w)&= \frac{\partial}{\partial
s}G\left(v+s w\right)|_{s=0}.
\end{align}
Moreover, $v$ is $g_v$-orthogonal to the unit sphere $S$, and the metric
$g$ is positive definite if and only
if so is its restriction to $S$.
\end{prop}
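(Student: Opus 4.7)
The plan is to read the entire statement as a set of consequences of two ingredients: the formula for the Hessian of a function on an affine space in terms of constant vector fields, and Euler's identity for positively homogeneous functions applied to $G=\tfrac12\|\cdot\|^2$ (which is positively homogeneous of degree $2$).

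First, I would establish formula \eqref{eg}. Extending $u,w\in V$ to constant vector fields $U,W$ on $V$, one has $\nabla^0_U W=0$ because $W$ is constant for the natural affine connection, so $\mathrm{Hess}\,G(U,W)=U(W(G))$; evaluated at $v$, this is precisely the iterated directional derivative appearing on the right-hand side of \eqref{eg}. Symmetry of this mixed partial in $u,w$ reproves symmetry of $g_v$ along the way. Positive homogeneity of degree $0$ of $v\mapsto g_v$ is then immediate from \eqref{eg} and the scaling relation $G(\lambda x)=\lambda^2 G(x)$: substituting $\lambda v$ for $v$ and rescaling the inner variables $t,s$ by $\lambda$ yields $g_{\lambda v}(u,w)=g_v(u,w)$.

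Next, for the two identities in \eqref{propfundtensor}, I would invoke Euler: $dG_x(x)=2G(x)=\|x\|^2$, so $dG$ is positively homogeneous of degree $1$. Then, using \eqref{eg} with $u=v$,
\begin{equation*}
g_v(v,w)=\frac{d}{dt}\Big|_{t=0} dG_{v+tv}(w)=\frac{d}{dt}\Big|_{t=0}(1+t)\,dG_v(w)=dG_v(w)=\frac{\partial}{\partial s}G(v+sw)\Big|_{s=0},
\end{equation*}
which is the second identity; taking $w=v$ and applying Euler's identity once more gives $g_v(v,v)=dG_v(v)=2G(v)=\|v\|^2$.

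For the last two assertions I would first note that, since $d\|\cdot\|_v(v)=\|v\|\neq 0$ by homogeneity, $v\notin T_vS$ and one has the splitting $V=\R v\oplus T_vS$. If $w\in T_vS$ then $d\|\cdot\|_v(w)=0$, hence $dG_v(w)=\|v\|\,d\|\cdot\|_v(w)=0$, so by the identity just proved $g_v(v,w)=0$; this is the claimed $g_v$-orthogonality of $v$ to $S$. For the final equivalence, the implication ``$g$ positive definite $\Rightarrow$ $g|_S$ positive definite'' is trivial, while the converse uses the splitting: any $w\in V$ writes as $w=\lambda v+u$ with $u\in T_vS$, and by orthogonality
\begin{equation*}
g_v(w,w)=\lambda^2\|v\|^2+g_v(u,u),
\end{equation*}
which is strictly positive whenever $w\neq 0$, provided $g_v|_{T_vS}$ is positive definite; for a general $v\neq 0$, homogeneity of degree $0$ of $g$ reduces to the case $v\in S$. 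The only slightly delicate step is this last equivalence, since it requires the direct-sum decomposition of $V$ together with the orthogonality just established; once these are in place the argument is a one-line computation.
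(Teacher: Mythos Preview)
Your proof is correct. The paper does not actually prove this proposition: it is stated immediately after the sentence ``Let us collect some properties of the fundamental tensor $g$ that come directly from its definition and the positive homogeneity,'' and no argument is given. Your write-up supplies exactly the details the authors leave implicit --- the identification of the Hessian with iterated directional derivatives via $\nabla^0_UW=0$ for constant fields, Euler's relation for the degree-$2$ function $G$, and the splitting $V=\R v\oplus T_vS$ together with the orthogonality $g_v(v,T_vS)=0$ to handle the final equivalence. One small cosmetic point: when you write ``$v\notin T_vS$'' and ``$w\in T_vS$'' for general $v\neq 0$, you are tacitly working with the level sphere of radius $\|v\|$ through $v$ (or, equivalently, reducing to $v\in S$ by the degree-$0$ homogeneity you already established); this is harmless, but worth saying explicitly since $T_vS$ literally only makes sense for $v\in S$.
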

Any norm can be characterized by its closed unit ball $B=\{x\in
V:\|x\|\leq 1\}$ and its unit sphere or {\em indicatrix} $S=\{x\in
V:\|x\|=1\}$.  For the next result, recall that $V$ is endowed
with a natural affine connection $\nabla^0$ (as so is any vector
 or affine space) and, then, any smooth hypersurface admits a
 second fundamental form $\sigma^\xi$ for each
  {\em transverse}\footnote{ Even though a Minkowski norm would allow  one
to define locally a normal direction to the hypersurface (recall
for example \cite[Def. 3.2.2, Cor. 3.2.3 ]{Th}), this will not
play any role here.}  vector field  $\xi$ (see
\eqref{nablacanonica} below).

\begin{prop}\label{pnormball} Let $\| \cdot \|:V\rightarrow \R$ be a
norm and $B$ its closed unit ball. Then
\begin{enumerate}[(i)]
\item Vector $0$ belongs to the interior of $B$, $0\in  \ri B$.

\item $B$ is compact.

\item $B$ is convex (i.e., $u,v\in B$ implies $ \lambda u +
(1-\lambda)v\in B$, for all $\lambda\in [0,1]$).
\end{enumerate}
 Moreover,
\begin{enumerate}[(i)]
\item[(iv)] the norm $\| \cdot \|$ satisfies the strict triangle
inequality iff $B$ is strictly convex (i.e. $u,v\in B$, with
$u\neq v$, implies $ \lambda u + (1-\lambda)v\in  \ri B$, for all
$\lambda\in (0,1)$).

\item[(v)] the norm $\| \cdot \|$ is a Minkowski one iff
$S=\partial B$ is a smooth  hypersurface embedded in $V$ and it is
strongly convex (i.e. the second fundamental form $\sigma^\xi$ of
$S$ with respect to some, and then any, transverse vector $\xi$
pointing out to $\ri  B$ is positive definite).
\end{enumerate}
\end{prop}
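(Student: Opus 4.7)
The plan is to establish (i)--(iii) by comparing $\|\cdot\|$ with any background Euclidean norm $|\cdot|$ on $V$. Expanding $v$ in a fixed basis and applying the triangle inequality together with positive homogeneity yields an estimate $\|v\|\leq C|v|$ for some $C>0$, so $\|\cdot\|$ is Lipschitz with respect to $|\cdot|$ and, in particular, continuous; this makes $B$ closed and places a Euclidean ball of radius $1/C$ around $0$ inside $B$, giving (i). On the Euclidean unit sphere the positive continuous function $\|\cdot\|$ attains a positive minimum $m$, so $\|v\|\geq m|v|$; this makes $B$ Euclidean-bounded, hence compact in finite dimension, giving (ii). Assertion (iii) is immediate from the triangle inequality and positive homogeneity.

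For (iv), I would argue both directions by tracking equality cases. Assuming the strict triangle inequality, if $u\neq v$ were points of $B$ with $\lambda u+(1-\lambda)v\in S$ for some $\lambda\in(0,1)$, then the chain $1=\|\lambda u+(1-\lambda)v\|\leq\lambda\|u\|+(1-\lambda)\|v\|\leq 1$ must be all equalities; this forces $\|u\|=\|v\|=1$ and saturates the strict triangle inequality on $(\lambda u,(1-\lambda)v)$, making $u$ and $v$ positively proportional; combined with $\|u\|=\|v\|=1$, this forces $u=v$, a contradiction. Conversely, if $B$ is strictly convex and $\|v+w\|=\|v\|+\|w\|$ with $v,w\neq 0$, setting $\hat v=v/\|v\|$, $\hat w=w/\|w\|$ and $s=\|v\|/(\|v\|+\|w\|)$ one checks $\|s\hat v+(1-s)\hat w\|=1$, so strict convexity forces $\hat v=\hat w$, which is the strict triangle conclusion.

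For the forward direction of (v), I would begin from smoothness of $G=\tfrac12\|\cdot\|^2$ away from $0$; by Proposition \ref{fundamentalprop}, $dG_v(v)=\|v\|^2=1$ on $S$, so $1/2$ is a regular value of $G$ and $S$ is smoothly embedded. To identify the second fundamental form, take vector fields $X,Y$ tangent to $S$ at $v\in S$, extended to a neighborhood so as to remain tangent along $S$; then $Y(G)\equiv 0$ on $S$, so $X(Y(G))(v)=0$, and the Hessian formula in the excerpt gives $g_v(X,Y)=-dG_v(\nabla^0_XY)=-g_v(v,\nabla^0_XY)$. Decomposing $\nabla^0_XY|_v=Z+\sigma^{-v}(X,Y)(-v)$ with $Z\in T_vS$ and using $g_v(v,Z)=0$ together with $g_v(v,v)=1$, this collapses to $g_v(X,Y)=\sigma^{-v}(X,Y)$ on $T_vS$; positive-definiteness of $g$ thus transfers to $\sigma^{-v}$, and $-v$ indeed points to $0\in\ri B$.

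For the reverse direction, I would build a smooth chart from $S$. By (i)--(iii) each ray from $0$ meets $S$ in a single point, so $\Phi:S\times(0,\infty)\to V\setminus\{0\}$, $(x,t)\mapsto tx$, is a smooth bijection with $\|v\|$ equal to its second inverse coordinate; its differential $(X,s)\mapsto tX+sx$ is an isomorphism provided $x\notin T_xS$, which holds because $x+T_xS$ is a supporting hyperplane of the convex set $B$ at $x\in\partial B$ and $0\in\ri B$ sits strictly off it. Hence $\|\cdot\|$ is smooth on $V\setminus\{0\}$, and the Hessian computation above now yields $\sigma^{-v}=g_v|_{T_vS\times T_vS}$, so strong convexity gives positive-definiteness of $g_v$ on $T_vS$; combining with $g_v(v,v)=1>0$, the $g_v$-orthogonality $g_v(v,T_vS)=0$ from Proposition \ref{fundamentalprop} and the splitting $V=\R v\oplus T_vS$ delivers positive-definiteness on all of $V$. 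The main obstacle I anticipate is precisely the identification $\sigma^{-v}=g_v|_{T_vS\times T_vS}$: it requires a careful choice of extension for the tangent fields $X,Y$ off $S$, the right sign for the transverse direction $-v$ matching the convention defining $\sigma^\xi$, and, in the converse, establishing smoothness of $\|\cdot\|$ before any $g$-computation becomes meaningful.
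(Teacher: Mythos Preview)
Your proof is correct and follows essentially the same approach as the paper's: the same norm-equivalence argument for (i)--(iii), the same normalization trick $\hat v,\hat w$ for (iv), and the same identification of $g_v|_{T_vS}$ with the second fundamental form via the affine decomposition $\nabla^0_XY=\nabla^\xi_XY+\sigma^\xi(X,Y)\xi$ for (v). The paper computes $g(X,X)=-\sigma^\xi(X,X)\,\xi(G)$ with $\xi=-v$ and $-\xi(G)=1$, which is your $g_v(X,Y)=\sigma^{-v}(X,Y)$ in slightly different packaging. Your treatment of $(v)\!\Leftarrow$ is actually more careful than the paper's: you justify smoothness of $\|\cdot\|$ via the explicit diffeomorphism $\Phi:S\times(0,\infty)\to V\setminus\{0\}$ and establish transversality of the position vector through the supporting-hyperplane argument, whereas the paper simply asserts that ``positive homogeneity implies that $-\xi$ is transverse and $\|\cdot\|$ is smooth away from $0$'' without spelling this out.
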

\begin{proof}
Standard arguments as in \cite{BaChSh00, Th} are used. Namely,
assertions in $(i)$ and $(ii)$ follow from the fact that all the
norms on $V$ are equivalent \cite[p. 29]{Th}, and $(iii)$, as well
as ($(iv)\Rightarrow$), is straightforward from the triangle
inequality applied to $ \lambda u + (1-\lambda)v$. For
($(iv)\Leftarrow$), in the non-trivial case when $\{u, v\}$ is
linearly independent,  put $\tilde{u}=u/\| u \|, \tilde{v}=v/\| v
\|\in \partial B$ so that
$$
z=\frac{\| u \|}{\| u \|+ \| v
\|}\tilde{u} + \frac{\| v \|}{\| u \|+ \| v
\|}\tilde{v} \in \ri B.
$$
Then, $1>\| z \| =\| u +v\|/ (\| u \|+ \| v \|)$, as required.

For ($(v)\Rightarrow$), recall that positive homogeneity implies
that $1$ must be a regular value of $\| \cdot \|$ and, so, $S$ is
a closed smooth hypersurface in $V$. Moreover, the opposite $\xi$
of the vector position is transverse to $S$ and points out to $\ri
B$. So, for any vector fields $X, Y$  tangent to $S$, the
decomposition of the canonical connection $\nabla^0$ on $V$,
\begin{equation}\label{nablacanonica}
 \nabla^0_XY =\nabla^\xi_XY + \sigma^\xi(X,Y)\xi,
\end{equation}
holds under standard conventions. Putting again $G=\frac 12\|\cdot\|^2$,
then \begin{equation}\label{esconv0} g(X,Y)={\rm Hess}\, G(X,Y)=
-\nabla^0_XY(G),\end{equation} and using
(\ref{nablacanonica}),
\begin{equation}\label{esconv}
g(X,X) =-\nabla^\xi_XX(G) - \sigma^\xi(X,X)\xi(G)=-
\sigma^\xi(X,X)\xi(G).
\end{equation}
As positive homogeneity implies that $-\xi(G)>0$, we have that $g(X,X)>0$ iff $\sigma^\xi(X,X)>0$. For ($(v)$ $\Leftarrow$), by last statement in Proposition \ref{fundamentalprop}, we only have to prove that the restriction of $g$ to
the indicatrix $S$ is positive definite. Moreover, notice that the
positive homogeneity implies that $-\xi$ is transverse and
$\|\cdot\|$ is smooth away from $0$.  So, \eqref{esconv} can be applied again and we are done.
\end{proof}
\subsection{Generalized notions}
Next, let us generalize the notion of Minkowski norm.

\begin{defi}\label{dnormsextended}

A {\em Minkowski pseudo-norm} on $V$ is a map $\| \cdot
\|:V\rightarrow \R$ which satisfies $(i)$,
$(ii)$ and $(c1)$ in Def. \ref{dnorms} (i.e., $g$ is not
necessarily positive definite).

 A  {\em Minkowski conic norm} on $V$ is a map  $\| \cdot
\|:A\rightarrow\R$, where $A\subset V$ is
 a {\em conic
domain}   (i.e., $A$ is open, non-empty and satisfies that if
$v\in A$, then $\lambda v\in A$ for all $\lambda>0$), which
satisfies the conditions $(i)$, $(ii)$, $(c1)$ and $(c2)$ in Def.
\ref{dnorms} for all  $v\in A$  
(i.e. $g$ is positive definite, but the conic domain $A$ may
not be all $V$ and, in this case, it excludes vector $0$).

A {\em Minkowski conic pseudo-norm} on $V$ is a map $\| \cdot
\|:A\rightarrow\R$, which satisfies $(i)$, $(ii)$ and (c1) in Def.
\ref{dnorms}  for all
 $v\in A$,     where $A\subset V$ is a  conic domain 
(i.e., the two
previous extensions of the notion of Minkowski norm are allowed
simultaneously).
\end{defi}

 For simplicity, we will assume typically that $A$ is
connected. As in the case of norms, we do not assume that
$\|\cdot\|$
 is reversible, and 
 we can define two types of  {\em affine (normed) balls}  depending on the order
 we compute the substraction. Namely, for any Minkowski conic pseudo-norm the {\em forward and backward
 affine  open balls} of center $v$
 and radius  $r> 0$  are defined, respectively, as
 \begin{align}\label{eopenballs}
 B^+_v(r)&=\{x\in v+A : \|x-v\|<r\}& \text{and} &&B^-_v(r)=\{x\in v-A : \|v-x\|<r\}.
  \end{align}
  where $v\pm A:=\{v\pm w: w\in A\}$.
 In the case that $g$ is not positive-definite, the behavior of
these balls may differ dramatically from the behavior of the
metric balls obtained from a length space (see  Example \ref{ex4}
and Remark \ref{pseudoaffineballs} below), even though the
continuity of $\|\cdot\|$ allows one to ensure that they are open
subsets. For closed balls $\bar B^\pm_v(r)$, the non-strict
inequality $\leq$ is used instead of $<$ in \eqref{eopenballs};
recall that these balls are closed in $v\pm A$ but not in $V$
(except if $V=A$). For the forward and backward spheres
$S_v^\pm(r)$, equalities replace the inequalities in
\eqref{eopenballs},
and the map
\begin{equation}\label{espheres}
\varphi:S^+_v(r)\rightarrow S^-_v(r) \quad \quad w\mapsto 2v-w
\end{equation}
is a homeomorphism. As in the case of norms, we will work by
simplicity with $B=\bar B_0^+(1)$ and $S=S_0^+(1)$ (which is the boundary of
$B$ in $A$).
  Recall also that $0\not\in A$ (and thus $0\not\in B$) except in the
case that $A=V$, i. e., when $\parallel \cdot \parallel$ is a
pseudo-norm.

\begin{rem}\label{r26}
 Observe that Proposition \ref{fundamentalprop} is
extended directly to this case due to its local nature and the
positive homogeneity. Moreover, the expressions
\eqref{nablacanonica} and \eqref{esconv} are also directly
transplantable to Minkowski conic pseudo-norms.
\end{rem}

 The following technical properties must be taken into
account.

\begin{prop}\label{plema}
 Let $\parallel \cdot \parallel: A\rightarrow \R$ be a Minkowski conic
pseudo-norm. Then
\begin{enumerate}[(i)]
\item the indicatrix $S$ is a hypersurface embedded in $A$ as a
closed subset, and the position vector at each point is transverse
to $S$,

\item if $\parallel \cdot \parallel$ is a pseudo-norm ($A=V$), $S$
is diffeomorphic to a  sphere.
\end{enumerate}

\end{prop}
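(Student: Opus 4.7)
The plan is to deduce (i) from the regular-value theorem applied to $\|\cdot\|$, and (ii) from an explicit radial diffeomorphism onto an auxiliary Euclidean sphere; positive homogeneity will drive both arguments.

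For (i), I would first note that $\|v\|=1$ forces $v\neq 0$, so $S\subset A\setminus\{0\}$, where $\|\cdot\|$ is smooth. Differentiating the positive homogeneity identity $\|tv\|=t\|v\|$ in $t$ at $t=1$ yields $d\|\cdot\|_v(v)=\|v\|=1$ for every $v\in S$, so $1$ is a regular value of $\|\cdot\|$ and $S$ is an embedded smooth hypersurface of $A$; the same identity shows $v\notin T_vS=\ker d\|\cdot\|_v$, establishing transversality of the position vector. For closedness of $S$ in $A$ I would verify that $\|\cdot\|$ is continuous on all of $A$: this is automatic on $A\setminus\{0\}$, and if $0\in A$ (which, by openness of $A$ combined with its conic character, in fact forces $A=V$), continuity at $0$ follows from the bound $\|v\|\leq M|v|_E$, where $|\cdot|_E$ is any auxiliary Euclidean norm on $V$ and $M=\max\{\|u\|:|u|_E=1\}$, itself a consequence of positive homogeneity applied along rays.

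For (ii), with $A=V$, I would build a diffeomorphism between $S$ and the Euclidean unit sphere $\mathbb{S}^{N-1}\subset V$ (for any fixed Euclidean norm $|\cdot|_E$) via
\begin{equation*}
\Phi:\mathbb{S}^{N-1}\to S,\quad u\mapsto u/\|u\|,\qquad \Psi:S\to \mathbb{S}^{N-1},\quad w\mapsto w/|w|_E.
\end{equation*}
Both maps are smooth since $\|u\|>0$ for $u\neq 0$ and $|w|_E>0$ for $w\in S$, and positive homogeneity guarantees the images lie in the prescribed targets. Direct evaluation using $\|\lambda v\|=\lambda\|v\|$ for $\lambda>0$ gives $\Psi\circ\Phi=\mathrm{id}_{\mathbb{S}^{N-1}}$ and $\Phi\circ\Psi=\mathrm{id}_S$, so $\Phi$ is the desired diffeomorphism.

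The only conceptual point is the fact that each positive open ray from the origin meets $S$ in exactly one point: existence rests on strict positivity (which supplies the scale $1/\|v\|$) and uniqueness on positive homogeneity (so that $t\mapsto\|tv\|=t\|v\|$ is strictly monotone). Once this is in place, everything else reduces to a routine application of the regular-value theorem and the chain rule, so I do not anticipate any serious obstacle.
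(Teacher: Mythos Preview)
Your proposal is correct and follows essentially the same route as the paper: for (i) the paper also uses that positive homogeneity makes the differential of $\|\cdot\|$ nonzero on the position vector, so $S=\|\cdot\|^{-1}(1)$ with $1$ a regular value; for (ii) the paper uses exactly your radial map $u\mapsto u/\|u\|$ from an auxiliary Euclidean sphere. Your treatment is slightly more detailed in that you explicitly justify closedness of $S$ in $A$ via continuity at $0$ when $A=V$ (a point the paper defers to Theorem~\ref{tfunda}(i)), but the underlying arguments are the same.
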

\begin{proof} For $(i)$, 
positive homogeneity implies that the differential of $\| \cdot
\|$ does not vanish on the position vector and, so, $S$ is the
inverse image  of the regular value $1$.

For $(ii)$, consider the norm $\| \cdot \|^E$ associated to any
auxiliary Euclidean product on $V$, and let $S^E$ be its unit
sphere, and observe that the map $S^E\rightarrow S, v\mapsto v/\|
v \|$ and its inverse are smooth (as so is $\| \cdot \|$).
\end{proof}

\begin{rem}
 As in the proof of the previous proposition, when necessary, we
will consider an auxiliary  norm $\| \cdot \|^E$ associated to
some Euclidean product $g^E$ on $V$, and the results obtained will be independent
of the choice of $g^E$. In particular, $V$ can be regarded as a
Riemannian manifold with Levi-Civita connection $\nabla^0$, the
spheres $S_v^\pm(r)$ as Riemannian submanifolds with the metric
induced by $g^E$, and the map \eqref{espheres} as an
isometry. The  $\|\cdot \|^E$-open balls will be
denoted with a  superscript $E$, say as in
$B^E_v(r)$.
\end{rem}

Now, let us focus on Minkowski conic norms. First, we will show
that the forward  and backward open balls constitute a subbasis
for the topology  (to check  the optimality of this result,
see Examples \ref{ex2a} and \ref{ex3}, and Proposition \ref{ppseudi}
below).

 \begin{prop}\label{opennormballs}
Let $\|\cdot\|:A\rightarrow \R$ be a conic Minkowski norm with $A$ connected. Then,  the collection of subsets
\[\{B_{v_1}^+(r_1)\cap B_{v_2}^-(r_2):\text{$v_1,v_2\in V,$  $r_1,r_2>0$}\}\] is a topological basis of $V$.
\end{prop}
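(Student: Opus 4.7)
The plan is to verify the two axioms of a topological basis for the standard topology on $V$. Each element $B_{v_1}^+(r_1)\cap B_{v_2}^-(r_2)$ is already open in $V$, since the forward and backward affine balls are themselves open (as remarked in the text preceding the statement); so it suffices to show, for every $p\in V$ and every open neighborhood $U$ of $p$, the existence of such an intersection containing $p$ and contained in $U$.

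Fix such $p$ and $U$; choose $\delta>0$ with $B_p^E(\delta)\subset U$ (where $\|\cdot\|^E$ is an auxiliary Euclidean norm on $V$), and pick any $w\in A$. For $\epsilon>0$, let
\[v_1:=p-\epsilon w,\quad v_2:=p+\epsilon w,\quad r_1:=r_2:=2\epsilon\|w\|.\]
Then $p-v_1=\epsilon w\in A$ and $\|p-v_1\|=\epsilon\|w\|<r_1$, so $p\in B_{v_1}^+(r_1)$; symmetrically $p\in B_{v_2}^-(r_2)$. By positive homogeneity of $A$ and of $\|\cdot\|$, the affine change $x=p+\epsilon\tilde y$ identifies the intersection with $p+\epsilon K_w$, where
\[K_w:=\{\tilde y\in V:\tilde y+w,\ w-\tilde y\in A;\ \|\tilde y+w\|<2\|w\|;\ \|w-\tilde y\|<2\|w\|\}\]
depends only on $w$, not on $p$ or $\epsilon$.

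If $K_w$ is bounded in $\|\cdot\|^E$ with $\sup_{K_w}\|\tilde y\|^E\le M$, then choosing $\epsilon<\delta/M$ makes the intersection lie in $B_p^E(\delta)\subset U$, finishing the proof. To prove $K_w$ bounded, I would argue by contradiction: take $\tilde y_n\in K_w$ with $\lambda_n:=\|\tilde y_n\|^E\to\infty$, pass to a subsequence so that $d_n:=\tilde y_n/\lambda_n\to d$ on the Euclidean unit sphere. Since $A$ is a cone, $(\tilde y_n\pm w)/\lambda_n\in A$ and tends to $\pm d$, forcing $\pm d\in\bar A$. Positive homogeneity plus $\|\tilde y_n\pm w\|<2\|w\|$ gives $\|(\tilde y_n\pm w)/\lambda_n\|<2\|w\|/\lambda_n\to0$, and since $\|\cdot\|$ is positive and continuous on $A$, this rules out $\pm d\in A$; hence $\pm d\in\partial A$.

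The main obstacle is extracting a contradiction from $\pm d\in\partial A$ together with these vanishing-rate conditions. Here it is essential that $\|\cdot\|$ is a genuine \emph{Minkowski conic norm} (not merely a pseudo-norm): positive definiteness of the fundamental tensor $g$ (equivalently, strong convexity of the indicatrix) restricts how fast $\|\cdot\|$ may decay along $\partial A$. Writing $(\tilde y_n+w)/\lambda_n=d_n+w/\lambda_n$, the rescalings approach $\pm d$ at speed $\|w\|^E/\lambda_n$ in the component of $w$ perpendicular to $d$; the decay rate of $\|\cdot\|$ required to satisfy $\|(\tilde y_n\pm w)/\lambda_n\|<2\|w\|/\lambda_n$ proves incompatible with the strong convexity of the indicatrix as $\lambda_n\to\infty$, yielding the desired contradiction and hence the boundedness of $K_w$.
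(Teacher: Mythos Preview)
Your reduction to showing that $K_w=B^+_{-w}(2\|w\|)\cap B^-_{w}(2\|w\|)$ is bounded is correct and is a reasonable way to organize the argument. After rescaling by $\lambda_n/(2\|w\|)$, your hypothetical unbounded sequence yields points $a_n,b_n$ in the open unit ball with $a_n+b_n=w/\|w\|\in S$ fixed and $\|a_n-b_n\|^E\to\infty$; this is the crux.

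However, the final step is a genuine gap: you do not actually prove the claimed ``incompatibility with strong convexity.'' Your heuristic asserts that the decay rate of $\|\cdot\|$ near $\partial A$ is controlled by strong convexity of the indicatrix, but strong convexity is a \emph{local} statement on $S\subset A$ and says nothing directly about how $\|\cdot\|$ behaves as one approaches $\partial A$. Concretely, in the first conic Minkowski norm of Example~\ref{ex2a} (indicatrix the branch $(t,\sqrt{1-t})$, domain $A=\{y>0\}$), one computes $\|(x,y)\|=\tfrac12\big(x+\sqrt{x^2+4y^2}\big)$, so along $(-1,\epsilon)\to(-1,0)\in\partial A$ the norm decays like $\epsilon^{2}$. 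Thus the decay $\|z_n^{+}\|=O(1/\lambda_n)$ at Euclidean distance $O(1/\lambda_n)$ from $\partial A$ is entirely compatible with strong convexity on one side; the contradiction in that example comes instead from the \emph{other} sequence $z_n^{-}\to(1,0)$, where the norm stays bounded away from zero. Your argument does not isolate this mechanism, and in general (with $A$ only assumed connected, not convex) it is unclear how to turn the vague ``rate'' statement into a proof.

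The paper sidesteps the boundary entirely. Having fixed $v_0\in S$ (your $w/\|w\|$), it chooses an auxiliary Euclidean metric making $v_0$ unit and normal to $S$, then uses the strict positivity of the principal curvatures of $S$ near $v_0$ to trap a neighbourhood of $v_0$ on $S$ inside a round sphere of some radius $r_k$. This gives, for each small $\epsilon$, an inclusion $B^{+}_{(\epsilon-1)v_0}(1)\cap B^{-}_{(1-\epsilon)v_0}(1)\subset B^E_{x^{+}_{\epsilon}}(r_k)\cap B^E_{x^{-}_{\epsilon}}(r_k)$, and the right-hand side shrinks to $\{0\}$ as $\epsilon\to 0$. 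The argument is purely local at $v_0\in S$ and never looks at $\partial A$; that is exactly what makes it go through without convexity assumptions on $A$. If you want to complete your approach, the missing ingredient is precisely this local curvature comparison at $w/\|w\|$, not a global decay estimate near $\partial A$.
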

 \begin{proof}  By using translations, it is enough to show that
they yield a topological basis around $0 \in V$.

 Choose any $v_0\in S$ and an auxiliary Euclidean $g^E$ such that
 $v_0$ is also unit and orthogonal to $S$ for $g^E$. Let $\theta_{v_0}(v)$ be the $g^E$-angle
 between $v\in V\setminus \{0\}$ and $v_0$.
 Fix $\theta_0\in (0,\pi/2)$
satisfying  both, every $v\in V$ with $\theta_{v_0}(v)<\theta_0$
lies in $A$, and for all such $v$ in $S$, the minimum of the
$g^E$-principal curvatures of $S$ at $v$ (in the inward direction
to $B$) is greater than some constant $k>0$. Such a constant $k$
exists from the positive definiteness of $g$, putting $\xi$ in
\eqref{esconv} as the inward $g^E$-unit normal (in particular,
$\xi_{v_0}=-v_0$). Then, the $g^E$-round sphere through $v_0$ with
radius $r_k=1/k^2$ and center $x^+$ in the $\xi_{v_0}$ direction
remains outside of $B \cap \{v\in V: \theta_{v_0}(v)<\theta_0\}$.
Observe that, by the isometry $\varphi$ in \eqref{espheres}, the
analogous properties hold for $-v_0$ and $S^-_0(1) (=\varphi(S))$
with the same value of $k$ (and thus  $r_k$), obtaining then a
second center $x^-$.

Now, for any neighborhood $U$ of 0,  there exists some
$\epsilon>0$ such that $x^+_\varepsilon=x^+-(1-\varepsilon) v_0$
and $x^-_\varepsilon=x^-+(1-\varepsilon) v_0$ satisfy
$$B^E_{x^+_\varepsilon}(r_k) \cap B^E_{x^-_\varepsilon}(r_k)\subset
U,$$ and the required property follows because, from the
construction, $$0\in B^+_{(\epsilon-1)v_0}(1) \cap
B^-_{(1-\epsilon)v_0}(1) \subset B^E_{x^+_\varepsilon}(r_k) \cap
B^E_{x^-_\varepsilon}(r_k).$$
\end{proof}

\begin{rem}\label{fundineqRem}
We  emphasize that in the conic Minkowski case the strict triangle
inequality still holds for every $v_1,v_2\in A$ such that $t
v_1+(1-t)v_2\in A$  for $t\in(0,1)$
 (see the proof of parts
 $(iv)$ and (v) in Proposition \ref{pnormball}, or Theorem \ref{tfunda} below); in particular, it holds for any $v_1, v_2\in A$ if $A$ is
 convex.
  Analogously, if additionally $v_1\neq 0$ then
$\|\cdot\|$ satisfies the {\it Fundamental Inequality}:
\[\frac{\partial}{\partial t} \big(\|v_1+t v_2\|\big)|_{t=0}\leq \|v_2\|,\]
  or equivalently,
\begin{equation}\label{fundineq}
g_{v_1}(v_1,v_2)\leq \|v_1\| \|v_2\|,
\end{equation}
(use Eq. \eqref{propfundtensor}),  where  the equality holds
if and only
 if $v_2=\lambda v_1$ for some $\lambda\geq 0$ (see \cite[Section 1.1]{ChSh05}).

With more generality,  the  (strict) triangle inequality for
$v_1,v_2\in A$ in a Minkowski conic pseudo-norm holds when:
\begin{enumerate}[(i)]
\item $t v_1+(1-t)v_2\in A$, \item $g$ is positive definite in the
direction ${t v_1+(1-t)v_2}$,
\end{enumerate}
for every $t\in(0,1)$.  As, essentially, the triangle inequality implies the
fundamental one (see \cite[p. 9]{BaChSh00}), conditions (i) and
(ii) are also sufficient to ensure this inequality on $v_1$ and
$v_2$. In particular, if $g_{v_1}$ is positive-definite then both,
the triangle inequality and the fundamental one, hold in a
neighborhood of $v_1$. Moreover,  these inequalities also hold for
any conic Minkowski pseudonorm under more general hypotheses
related to the possible convexity of the indicatrix somewhere. For
example, fixing $v_1\in A$ the fundamental inequality holds for
any $v_2\in A$ if the hyperplane $H$ tangent to the indicatrix $S$
at  $v_1/\| v_1
\|$  touches the
closed unit ball only at $v_1$ (this implies that $g_{v_1}$ is
positive semi-definite and, even when non-positive definite, the
 directions $g_{v_1}$-orthogonal to $v_1$ are those tangent to $H$). Indeed,
given $v_1,v_2$, we can assume that $v_2=\lambda v_1+w$, with
$\lambda\geq 0$ and $g_{v_1}(v_1,w)=0$ (if $\lambda<0$, the
fundamental inequality holds trivially). Observe that the
hypothesis on the hyperplane (transplanted to the parallel
hyperplane at $\lambda v_1$) yields $\lambda\| v_1\|\leq \|v_2\|$
with equality only when $\lambda v_1 =v_2$. Then
\[g_{v_1}(v_1,v_2)=\lambda \|v_1\|^2\leq \|v_1\|\|v_2\|,\]
and the required fundamental inequality follows. 
\end{rem}

The word ``Minkowski'' in Definition \ref{dnormsextended}
comprises two properties for the defined objects: (i) they are
smooth away from 0, and (ii) in the case of (conic) norms, the
fundamental tensor $g$ is positive definite. Recall, that for a
classical norm as in part $(a)$ of Definition \ref{dnorms}, one has only the
weaker properties of continuity and triangle inequality, the
former deduced from the latter. This can be extended to the conic
case but, as the triangle inequality is involved, the previous
remark suggests to impose convexity for the domain $A$
---in particular, $A$ will be connected. Recall that, under this assumption, if
there exists a vector $v\in V$ such that $v,-v\in A$ then $0\in A$ and $A=V$.

\begin{defi}\label{dcontinuousnorms} Let $A$ be a convex conic open subset of $V$. We say that a map $\| \cdot
\|:A\rightarrow\R$ satisfying $(i)$ and $(ii)$ in Def.
\ref{dnorms}  for all $v\in A$ is:

\begin{enumerate}[(i)]
\item a  {\em conic norm} on $V$ if it satisfies the triangle
inequality ($(a)$ in Def. \ref{dnorms}), \item a {\em conic norm
with strict triangle inequality} if it satisfies
 $(b)$ in Def. \ref{dnorms}.
\end{enumerate}
\end{defi}
\begin{prop}\label{pcontinuousnorms}
Any conic norm $\| \cdot \|:A\rightarrow\R$ is continuous, its
open forward and backward balls are open subsets of $V$, and its
indicatrix $S$ is a topological hypersurface, which is closed as
a subset of $A$ and homeomorphic to an open subset of the usual
sphere.
\end{prop}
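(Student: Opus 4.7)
The plan is to first establish continuity as a consequence of the convexity of $\|\cdot\|$, viewed as a function on the open convex set $A$, and then to derive the remaining assertions from continuity together with the radial structure. The starting observation is that, combining positive homogeneity with the triangle inequality (both applied within $A$, using convexity of $A$ to keep all intermediate vectors in $A$), one has for $v,w\in A$ and $\lambda\in(0,1)$
\[\|\lambda v+(1-\lambda)w\|\leq \|\lambda v\|+\|(1-\lambda)w\|=\lambda\|v\|+(1-\lambda)\|w\|,\]
so that $\|\cdot\|$ is a convex real-valued function on the open convex subset $A$ of the finite-dimensional vector space $V$. The standard fact from convex analysis that every such function is automatically continuous then yields continuity on $A$. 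If one prefers an elementary argument, one can inscribe each $v_0\in A$ in a small simplex contained in $A$ to bound $\|\cdot\|$ from above on a neighborhood of $v_0$ via the above convex-combination inequality, and then upgrade local boundedness to local Lipschitzness (hence continuity) by a further application of the same inequality.

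Once continuity is in hand, the openness of $B_v^+(r)$ in $V$ is immediate, as it is the intersection of the open set $v+A$ with the preimage of $(-\infty,r)$ under the continuous map $x\mapsto\|x-v\|$ defined on $v+A$; the same argument covers $B_v^-(r)$.

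For the indicatrix, continuity of $\|\cdot\|$ gives $S=\|\cdot\|^{-1}(1)$ closed in $A$. To describe $S$ topologically I would mimic the smooth argument in Proposition \ref{plema}(ii): pick any auxiliary Euclidean norm $\|\cdot\|^E$ on $V$ with unit sphere $S^E$, and check that the radial projection
\[\Phi:S^E\cap A\longrightarrow S,\qquad v\mapsto v/\|v\|,\]
is a homeomorphism with continuous inverse $w\mapsto w/\|w\|^E$; both formulas are well-defined and continuous because $\|\cdot\|$ and $\|\cdot\|^E$ are continuous and strictly positive on $S^E\cap A$ and $S$ respectively. Since $A$ is open in $V$, $S^E\cap A$ is open in $S^E$, so $S$ is homeomorphic to an open subset of the standard sphere and in particular is a topological hypersurface embedded in $V$. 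The only step really requiring thought is the continuity claim; the rest is a formal transcription of the smooth argument of Proposition \ref{plema}, with continuity taking over the role that smoothness plays there.
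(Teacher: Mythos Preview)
Your proof is correct, but the route differs from the paper's. The paper proceeds in the opposite order: it first proves directly that the forward and backward balls are open (by choosing, for a given $x\in B_v^+(r)$, a basis $e_1,\dots,e_N$ of $V$ contained in $A$ with $x-v$ having strictly positive coordinates, and then using the triangle inequality to trap a small open set around $x$ inside the ball), and only afterwards deduces continuity of $\|\cdot\|$ from the openness of the balls via a sandwich argument with points $y_\pm=(1\pm\delta)x_0$. Your approach instead observes that positive homogeneity plus the triangle inequality make $\|\cdot\|$ a convex function on the open convex set $A$, invokes the classical fact that real-valued convex functions on open convex subsets of finite-dimensional spaces are continuous, and then reads off the openness of the balls as an immediate corollary. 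Your argument is shorter and conceptually cleaner, at the cost of appealing to an external (though standard) result from convex analysis; the paper's argument is entirely self-contained and, incidentally, exhibits the combinatorial structure of the balls more explicitly. The treatment of the indicatrix is essentially identical in both.
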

\begin{proof}
Let us show first that  the forward and the backward affine balls
are open. Given $x\in B^+_v(r)$, as $A$ is open, we can fix a
basis $e_1,e_2,\ldots,e_N$ of $V$ contained in $A$, and such
that $z:=x-v=\sum_{i=1}^N z^ie_i$ with $z^i>0$ for all $i=1,\ldots,N$.
Denote $C=\max \{\|e_1\|,\|e_2\|,\ldots, \|e_N\| \}$ and $y=\sum_{i=1}^N
y^i e_i$ with $y_i>0$ for all $i$. 
By the
triangle inequality, if $0<\lambda <1$, the open subset
\[O_\lambda=\{v+\lambda (x-v)+y: C \sum_{i=i}^N y^i <r-\lambda \|x-v\|\}\]
 is contained in $B^+_v(r)$. Moreover, $x\in O_\lambda$ when
$(1-\lambda)z$ can be chosen as one such $y$, i.e., whenever
 \[(1-\lambda)C \sum_{i=1}^N z^i <r-\lambda\|x-v\|.\]
This holds for  $\lambda$ close to one, as required. The backward
case is analogous.

For the continuity, given $x_0\in A$ and $0<\varepsilon<1$, let us
find an open neighborhood $\Omega\subset A$ of $x_0$ such that
$|\|x\|-\|x_0\||<\varepsilon$ for all $x\in \Omega$. Choose $y_\pm
= (1\pm \delta)x_0$ with $\delta=\frac{\varepsilon}{2\|x_0\|}$
and put $\Omega=B^+_{y_-}(\varepsilon)\cap
B^-_{y_+}(\varepsilon)$. This subset is open by the first part and,
if
$x\in \Omega$: 
\[\|x\|\leq \|x-y_-\|+\|y_-\|< \varepsilon+\|x_0\|, \quad
\|x_0\|\leq \|y_+\|\leq \|y_+-x\|+\|x\|< \varepsilon+\|x\|\] as
required.

For the last assertion, consider an auxiliary Euclidean norm
$\|\cdot\|^E$ with unit sphere $S^E$ and put $S_A^E:= S^E \cap A$.
Then, the map
$$
\rho: S^E_A \rightarrow S, \quad \quad v\mapsto v/\| v \|,
$$
is a homeomorphism, and the required properties of $S$ follow.
\end{proof}
\begin{rem}\label{rcontinuousnorms}
 In the case of  (conic) pseudo-norms  (i.e., only $(i)$ and $(ii)$ in Def.
\ref{dnorms} are fulfilled) the continuity does not follow
because there is no triangle inequality. So, $S$ may be not closed
in $A$ (nor a topological hypersurface) and its affine open balls
may be non-open as subsets of $V$. Throughout this paper all the
conic pseudo-norms will be Minkowski, that is, we will assume that
they are smooth away from $0$. Nevertheless, we will also discuss
next those which satisfy the (strict or not) triangle inequality,
even when $g$ is only positive semi-definite and, so, they are not
Minkowski conic norms.
\end{rem}

\subsection{Characterization through the unit ball}

In analogy to Proposition \ref{pnormball}, the unit balls can be
described as follows.

\begin{prop} \label{pfunda} Let $\parallel \cdot \parallel: A\rightarrow \R$ be a Minkowski conic
pseudo-norm. Then
\begin{enumerate}
\item[(a1)]  $B$  is a closed subset of $A$ which intersects all the
directions  $D_v:=\{\lambda v: \lambda >0$\}, $v\in A$.

\item[(a2)] $B$ is starshaped from the origin, i.e., $v\in B$ implies
$\lambda v\in B$ for all $\lambda\in(0,1)$.

\item[(a3)] The boundary $S$ of $B$ in $A$ is a smooth
hypersurface and a closed subset of $A$ such that the position
vector at each $v\in S$ is transversal (not tangent to
$S$).

\item[(a4)] For each $v\in B\setminus\{0\}$ there exists a
(necessarily unique)\footnote{It is obvious that the
uniqueness of $\lambda$ follows from the definition of Minkowski
conic pseudo-norm. However, we point out here that it also follows
from the previous three items, to stress the independence of the
hypotheses in the next theorem.} $\lambda>0$ such that
$v/\lambda \in S$.
\end{enumerate}
\end{prop}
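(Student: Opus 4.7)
The plan is to derive all four properties as consequences of positivity, positive homogeneity and smoothness of $\|\cdot\|$, combined with the already-established Proposition \ref{plema}. The overall strategy is that homogeneity reduces every assertion to a one-dimensional statement along the ray $D_v$, where $\|\cdot\|$ acts as the identity up to a positive scale factor.

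For (a1), I would first note that $\|\cdot\|$ is continuous on $A\setminus\{0\}$ by smoothness, and positive homogeneity gives $\|\lambda v\|=\lambda\|v\|\to 0$ as $\lambda\to 0^{+}$, so $\|\cdot\|$ is continuous on all of $A$ (this only matters when $A=V$ and $0\in A$). Hence $B=\|\cdot\|^{-1}([0,1])$ is closed in $A$. For the intersection with $D_v$, conicity of $A$ places the whole ray $\{\lambda v:\lambda>0\}$ in $A$, and since $\|v\|>0$ by $(i)$ of Definition~\ref{dnorms}, the point $v/\|v\|$ lies in $D_v\cap S\subset D_v\cap B$. Property (a2) is then immediate: if $v\in B$ and $\lambda\in(0,1)$, conicity gives $\lambda v\in A$ and homogeneity yields $\|\lambda v\|=\lambda\|v\|\leq\lambda<1$.

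For (a3), the bulk of the content is already in Proposition \ref{plema}(i): $S$ is a smooth hypersurface embedded in $A$ as a closed subset, and the position vector is transverse because the differential of $\|\cdot\|$ does not vanish along radial directions. What remains is to identify $S$ with the boundary $\partial B$ taken inside $A$. By continuity, $\{v\in A:\|v\|<1\}$ is open and contained in the interior of $B$; conversely, at any $v$ with $\|v\|=1$, the points $tv$ with $t>1$ (which belong to $A$ by conicity) accumulate at $v$ and lie outside $B$, so $v$ is not interior. Hence the interior of $B$ in $A$ equals $\{\|\cdot\|<1\}$ and its boundary equals $S$.

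For (a4), given $v\in B\setminus\{0\}$, set $\lambda=\|v\|>0$; homogeneity gives $\|v/\lambda\|=1$, and any other $\mu>0$ with $v/\mu\in S$ satisfies $\mu=\|v\|=\lambda$. As the footnote indicates, uniqueness also follows from (a1)--(a3) alone, by observing that the ray $D_v$ meets the transverse hypersurface $S$ in a single point; I would mention this explicitly to emphasize the logical independence that is invoked in Theorem~\ref{tfunda}. I do not anticipate a serious obstacle: the only mild point of care is the behavior at $0$ when $A=V$, which positive homogeneity handles cleanly.
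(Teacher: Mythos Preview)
Your proof is correct and follows the same approach as the paper, whose proof is a single line stating that all properties are straightforward from the definitions of $B$ and $S$, with Proposition~\ref{plema}(i) invoked for (a3). One small quibble: your argument for continuity of $\|\cdot\|$ at $0$ only handles radial limits, but this is harmless since $0\in B$ automatically whenever $0\in A$, so closedness of $B$ in $A$ never actually requires continuity at the origin.
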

\begin{proof}
All the properties are straightforward from the definition of $B$
and $S$ (for $\it (a3)$, use part $(i)$ of Proposition
\ref{plema}).
\end{proof}
Conversely, the unit balls characterize the different types of
conic pseudo-norms.  In fact, the following theorem (which also
strengthens Proposition \ref{pnormball}) gives a very intuitive
picture of all the types of Minkowski conic pseudo-norms defined
above. Recall that the classic notions of  convexity for $B$
(i.e., as  a neighborhood) were included in Proposition
\ref{pnormball}, and the notions of convexity for $S$ (its
boundary hypersurface) are included in the next theorem.

\begin{thm} \label{tfunda} Let $A$ be a conic domain of $V$ and $B$ a
subset of $A$ which satisfies all the properties $\it (a1)$ to $\it (a4)$ in
Proposition \ref{pfunda}. Then, the map
$$
\| \cdot \|_B: A\rightarrow \R , \quad \quad
v\mapsto \mbox{{\rm Inf}}\, \{\lambda >0: v/\lambda\in B\},
$$
is a Minkowski conic pseudo-norm and its closed unit ball is equal
to $B$. Moreover,
\begin{enumerate}
\item[(i)] $\| \cdot \|_B$ is a Minkowski pseudo-norm iff $S$ is
homeomorphic to a sphere. In this case,   $0\in \ri B$ and
$\|\cdot\|_B$ is continuous in $0$.
\item[(ii)] $\| \cdot \|_B$
is a conic Minkowski norm iff $S$ is strongly convex.
\end{enumerate}
 Assume now that $A$ is convex. Then
\begin{enumerate}
\item[(iii)]  $\| \cdot \|_B$ is a conic norm iff $B$ is convex
and iff $S$ is convex (its second fundamental form with respect to
the inner normal is positive semi-definite).

\item[(iv)] $\| \cdot \|_B$ is a conic norm with strict triangle
inequality iff $B$ is strictly convex and iff $S$ is strictly
convex (i.e. the hyperplane tangent to $S$ at each point $v_0$
only touches $B$ at $v_0$).

\end{enumerate}

\end{thm}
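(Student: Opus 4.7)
The plan is to first construct $\|\cdot\|_B$ and identify $B$ as its closed unit ball in general, then prove each of the four characterizations (i)--(iv) in turn, each as a refinement of the general case.

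\textbf{Construction and basic properties.} For $v\in A\setminus\{0\}$, I would analyze the set $T_v:=\{t>0:tv\in B\}$. By (a1), $T_v$ is nonempty and closed in $(0,\infty)$; by (a2) it is downward-closed, hence of the form $(0,t_{*}(v)]$ for some $t_{*}(v)>0$. Condition (a4) forces $t_{*}(v)$ to be the unique positive number with $t_{*}(v)v\in S$, so $\|v\|_B=1/t_{*}(v)>0$. Positive homogeneity is immediate. Smoothness (condition (c1)) follows from the implicit function theorem: writing $S$ locally as $\{\rho=0\}$ with $d\rho\ne 0$, transversality of the position vector (from (a3)) yields $\partial_t\,\rho(tv)|_{t=t_*(v)}=d\rho_{t_{*}(v)v}(v)\ne 0$, so $v\mapsto t_{*}(v)$ is smooth on $A\setminus\{0\}$. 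These facts also give $B=\{v\in A:\|v\|_B\le 1\}$ and $S=\{v\in A:\|v\|_B=1\}$, making $\|\cdot\|_B$ a Minkowski conic pseudo-norm with closed unit ball $B$.

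\textbf{Items (i) and (ii).} For (i), one implication is Proposition \ref{plema}(ii). For the converse, assuming $S$ is homeomorphic to the standard sphere $S^{N-1}$, the smooth radial map $\Psi:(0,\infty)\times S\to A\setminus\{0\}$, $(t,v)\mapsto tv$, is injective by (a4) and a local diffeomorphism by (a3), hence a diffeomorphism onto its image; invariance of domain applied inside $V$ then identifies $A\setminus\{0\}$ with $(0,\infty)\times S^{N-1}\cong V\setminus\{0\}$, so $A=V$, and the compactness of $S$ forces $B$ compact with $0\in\ri B$. Continuity of $\|\cdot\|_B$ at $0$ follows by sandwiching between two Euclidean norms using the compactness of $S$. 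Item (ii) is a direct transcription of the proof of Proposition \ref{pnormball}(v) via Remark \ref{r26}: the identity \eqref{esconv}, $g(X,X)=-\sigma^{\xi}(X,X)\,\xi(G)$ with $\xi$ the inward radial vector (so $\xi(G)<0$), remains valid at each point of $S$, and the last assertion of Proposition \ref{fundamentalprop} reduces positive-definiteness of $g$ to positive-definiteness of $\sigma^{\xi}$, i.e., strong convexity of $S$.

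\textbf{Items (iii) and (iv).} Assume $A$ convex. For ``$B$ convex $\Leftrightarrow$ $\|\cdot\|_B$ satisfies the triangle inequality'', I would run the classical Minkowski-functional argument: given $v_1,v_2\in A$, both unit vectors $v_i/\|v_i\|_B\in S\subset B$ lie in the convex set $A$, so by convexity of $A$ and $B$ the combination $(v_1+v_2)/(\|v_1\|_B+\|v_2\|_B)$ lies in $B$, which yields the triangle inequality by positive homogeneity; the converse follows from the triangle inequality applied to convex combinations as in Proposition \ref{pnormball}(iii). For ``$B$ convex $\Leftrightarrow$ $S$ convex'', use that $B$ is starshaped with smooth boundary $S$ inside $A$, so convexity of $B$ is equivalent to $S$ lying on one side of each tangent hyperplane, which is the sign condition $\sigma^{\xi}\ge 0$. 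The strict versions in (iv) follow by tracking equalities in this chain of implications, exactly as in the proof of Proposition \ref{pnormball}(iv).

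\textbf{Main obstacle.} The genuinely non-routine step, I expect, is the converse in item (i): deducing $A=V$ and $0\in\ri B$ from $S\cong S^{N-1}$. Conditions (a1)--(a4) only assert that $S$ is closed in $A$, so a priori $A$ could be a proper conic subdomain of $V$ whose indicatrix happens to be homeomorphic to a sphere; ruling this out requires the invariance-of-domain argument sketched above, executed carefully so that the origin is captured as an interior point of $B$ rather than a missing point of $A$. Everything else is a controlled adaptation of the classical arguments from Proposition \ref{pnormball} once the unit-ball / indicatrix dictionary is set.
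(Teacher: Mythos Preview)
Your proposal is correct and follows essentially the same architecture as the paper's proof: the Minkowski-functional construction, Proposition~\ref{plema}(ii) plus compactness for (i), formula~\eqref{esconv} plus Proposition~\ref{fundamentalprop} for (ii), and the cyclic equivalences among triangle inequality, convexity of $B$, and convexity of $S$ for (iii)--(iv). Two points of comparison are worth flagging.

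First, you overestimate the difficulty of the converse in (i). Invariance of domain is not needed. Once the diffeomorphism $\R^+\times S\to A\setminus\{0\}$ is in place, fix an auxiliary Euclidean sphere $S^E$; the radial projection identifies $A\cap S^E$ with $S$, so if $S$ is homeomorphic to a sphere it is compact, whence $A\cap S^E$ is an open \emph{and} compact subset of the connected space $S^E$, hence all of it. This is exactly the paper's one-line ``compactness of $S$ implies $A=V$''; the continuity at $0$ is then handled by a short sequential argument rather than a two-sided Euclidean sandwich.

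Second, for (iii)--(iv) the paper closes the cycle differently from your sketch. Rather than invoking the geometric fact ``$B$ convex iff $S$ lies on one side of each tangent hyperplane'', the paper goes directly from ``$S$ convex'' to the triangle inequality by setting $\phi(t)=G(t\tilde u+(1-t)\tilde v)$ with $G=\tfrac12\|\cdot\|_B^2$ and computing $\ddot\phi(t)=g_y(\tilde u-\tilde v,\tilde u-\tilde v)\ge 0$; convexity of $S$ gives positive semi-definiteness of $g$ via~\eqref{esconv}, so $\phi$ is convex with $\phi(0)=\phi(1)=1$, forcing $\phi\le 1$ and hence $\|u+v\|_B\le\|u\|_B+\|v\|_B$. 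Your route through ``$B$ convex'' is equally valid (and the paper even cites it as known), but note that the justification ``$\sigma^\xi\ge 0\Rightarrow B$ convex'' in the non-compact conic setting really passes through the same computation: $\sigma^\xi\ge 0\Rightarrow g\ge 0\Rightarrow G$ convex on the convex domain $A\Rightarrow$ sublevel sets convex. So the two approaches converge.
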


\begin{proof}
 The map  $\| \cdot \|_B$ is well defined from property
$\it (a1)$ (in fact, $\| v \|_B$ can be defined as the unique
$\lambda$ in $\it (a4)$ if $v\neq 0$). Then, it is positive
 and positively homogeneous. Its smoothness follows from
the smoothness of $S$ and the transversality ensured in $\it (a3)$,
as these conditions characterize when the
bijective map $\R ^+ \times S\rightarrow A\setminus\{0\},
(t,v)\mapsto tv$ is a diffeomorphism.  Moreover, $B$ is the
closed unit ball by construction and $\it (a2)$.

For  $(i)$, the implication to the right follows from part
$(ii)$ of Proposition \ref{plema}, and  the converse because the
compactness of $S$ implies that $A=V$.
Therefore, assuming that $S$ is a sphere, the properties $\it (a3)$ and
$\it (a4)$ (or the diffeomorphism given in the previous paragraph)
imply that $0$ belongs to the inner domain delimited by $S$.
Therefore, $B$ is  the inner domain (recall $\it (a2)$ and $\it (a3)$), and
$0\in \ri B$. For the continuity, if $\{x_n\}$ converges to $0$
but $\|x_n\|_B$ does not, 
we can assume that, up to a subsequence,  $\|x_n\|_B$  converges to
some $c\in(0,+\infty]$. Then, $x_{n}/\|x_{n}\|$ also converges to
some $y\in S$ up to a subsequence,
and $x_{n}\to c y\not=0$, which gives a contradiction.

For $(ii)$, recall that positive homogeneity implies
that $1$ must be a regular value of $\| \cdot \|$ and, so, $S$ is
a closed smooth hypersurface in $A$. Moreover, the opposite $\xi$
of the vector position is transverse to $S$ and points out to $\ri B$.
 Putting
$G=\frac 12\|\cdot\|_B^2$  and using  \eqref{esconv} (recall
Remark \ref{r26}), $\sigma^\xi(X,X)>0$ if and only if $g(X,X)>0$
for every vector field $X$ tangent to $S$. This statement together
with  last statement in Proposition \ref{fundamentalprop} (recall
again Remark \ref{r26})  prove $(ii)$.

 For $(iii)$ and $(iv)$, even though the equivalences between
the (strict or not) convexities of  $B$ and $S$ are known (see for
example \cite{BCGS, Sa} and references therein),  we will prove
the full cyclic implications for the sake of completeness.
Moreover, the first implications to the right in $(iii)$ and
$(iv)$ are straightforward from the triangle inequality applied to
$ \lambda u + (1-\lambda)v$. For the second implication to the
right, consider a straight  line $\mathbf r$ tangent to $S$ at
some $v_0\in S$ with direction $v$,  and let $(a,b)\subset\R$ be
the interval of points $t\in \R$ such that $v_0+t v\in A$ (recall
that $A$ is convex). Then the function $f:(a,b)\rightarrow \R$,
given by $f(t)=G(v_0+t v)-1$, where $G=\frac 12\|\cdot\|^2_B$,
satisfies $f(0)=0$, $\dot f(0)=0$  (as $dG_{v_0}(v)=0$) and
$\sigma^\xi_{v_0}(v,v)= -$ Hess$(v,v)=-\ddot f(0)$ (recall
\eqref{esconv0} and \eqref{nablacanonica} above). Now consider the
plane $\pi={\rm span}\{v_0,v\}$. If $\ddot f(0)<0$, then the
intersection $\pi\cap S$ must  lie, close to $v_0$, in the
connected component determined by the  line $\mathbf r$ that does
not contain $0$. Easily, this  contradicts the convexity of $B$
and proves the implication in part $(iii)$.
  For $(iv)$, if $\mathbf r$ touches $\pi\cap S$ in two points,
  then $S$ must contain a whole segment by convexity and it cannot be strictly convex. Reasoning with all the tangent right lines to $S$ in $v_0$ we conclude that the tangent hyperplane in $v_0$ touches only $v_0$.

To close the cyclic implications  in $(iii)$ and $(iv)$, we have
to prove that if $S$ is (strictly) convex, then $\|\cdot\|_B$
satisfies the (strict) triangle inequality. Given $u,v\in B$
(linearly independent), consider $\tilde{u}=u/\|u\|_B$,
$\tilde{v}=v/\|v\|_B\in S$ and define $\phi:[0,1]\rightarrow \R$
as
\[\phi(t)=G(t \tilde{u}+(1-t) \tilde{v})=G(\tilde{v}+t(\tilde{u}-\tilde{v})).\]
If we denote $y=\tilde{v}+t(\tilde{u}-\tilde{v})$, then $\ddot\phi(t)=g_y(\tilde{u}-\tilde{v},\tilde{u}-\tilde{v})\geq 0$ for $t\in (0,1)$ and $\phi(0)=\phi(1)=1$. It is easy to prove that $\phi$ is constantly equal to $1$ or $\phi(t)<1$ for $t\in (0,1)$.
In particular, for $t=\|u\|_B/(\| u \|_B+ \| v \|_B)$ we obtain
$$
z=\frac{\| u \|_B}{\| u \|_B+ \| v
\|_B}\tilde{u} + \frac{\| v \|_B}{\| u \|_B+ \| v
\|_B}\tilde{v} \in B.
$$
Then, $1\geq\| z \|_B =\| u +v\|_B/ (\| u \|_B+ \| v \|_B)$, as required. Regarding the strict inequality, observe that if $S$ is strictly convex, $\phi$ cannot be constantly equal to $1$.

\end{proof}
\begin{rem}\label{rojo} In the previous theorem, if the hypothesis $\it (a4)$ of
Proposition \ref{pfunda} were not imposed, then $\| \cdot \|_B$
would not be positive.  That is,   $\| v \|_B\geq 0$ would
still hold for all $v\in A$, but the equality could be reached
for some $v\neq 0$ and, thus, for all its {\em degenerate}
direction $D_v$. This could happen even if $A=V$ and, in this
case, $B$ would not be compact (nor $S$ homeomorphic to a sphere).
Even though this possibility could be also admitted, we prefer not
to include it. Or, equally, if degenerate directions appear, $A$
is supposed to be redefined  in order  to exclude them.
\end{rem}
Finally, we consider in particular the case of pseudo-norms.

\begin{prop}\label{ppseudi}
 Let $\| \cdot \|:V\rightarrow \R$ be a Minkowski
pseudo-norm. Then
\begin{enumerate}[(i)]
\item  the affine open  forward  (resp. backward)
balls
 constitute  a basis
for the natural topology of $V$,

\item if $\| \cdot
\|$ is not a Minkowski norm, the fundamental tensor field  $g$ is
degenerate at some direction.
\end{enumerate}
\end{prop}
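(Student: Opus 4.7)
For part (i), my plan is to combine continuity, openness of forward balls, and compactness of the unit ball. First I observe that $\|\cdot\|$ is continuous on all of $V$: it is smooth away from $0$ by hypothesis, and continuous at $0$ by Theorem \ref{tfunda}(i), applicable since $S$ is homeomorphic to a sphere (Proposition \ref{plema}(ii)). Hence each forward ball $B^+_v(r)$ is open in $V$. Next, the closed unit ball $B$ is compact: it is closed as the preimage of $[0,1]$, and bounded since the continuous Euclidean norm $\|\cdot\|^E$ attains a maximum $R$ on the compact $S$, which forces $B\subset B^E_0(R)$ by homogeneity. To conclude that forward balls form a basis, by translation invariance it suffices to shrink $B^+_0(r)$ inside any prescribed Euclidean neighborhood of $0$: homogeneity gives $B^+_0(r)=rB^+_0(1)\subset rB\subset B^E_0(rR)$, which is arbitrarily small for small $r$. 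The backward case is analogous via the homeomorphism \eqref{espheres}.

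For part (ii), assume $N\geq 2$, as the case $N=1$ is vacuous (any smooth positively homogeneous $\|\cdot\|$ on $\R$ automatically has positive definite $g$). My plan is to first locate a direction where $g$ is positive definite, and then exploit the connectedness of $S$ to manufacture a degenerate one. For the first step, take the smallest closed Euclidean ball $\overline{B^E_0(R_0)}$ containing $B$; its boundary $S^E_0(R_0)$ must touch $S=\partial B$ at some point $v_0$. Locally near $v_0$, $S$ lies on the $B$-side of $S^E_0(R_0)$, so the second fundamental form of $S$ at $v_0$ with respect to the inward transverse direction $-v_0$ is bounded below by that of $S^E_0(R_0)$, namely $1/R_0>0$, and is therefore positive definite. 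The identity \eqref{esconv} (as used in the proof of Proposition \ref{pnormball}(v)) then gives that $g_{v_0}$ is positive definite.

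For the second step, set $U=\{v\in S: g_v \text{ is positive definite}\}$. By continuity of $g$ the set $U$ is open in $S$; by the first step it is nonempty; and by the standing hypothesis that $\|\cdot\|$ is not a Minkowski norm, together with the homogeneity $g_{\lambda v}=g_v$, it is a proper subset of $S$. Since $S$ is connected (being a sphere of dimension $\geq 1$), $\partial U\neq\emptyset$, and I pick $v_2\in\partial U$. By continuity of eigenvalues, $g_{v_2}|_{T_{v_2}S}$ is positive semi-definite and not positive definite (else $v_2\in U$ by openness), so it admits a null vector $w\in T_{v_2}S$. The Cauchy--Schwarz inequality for positive semi-definite forms forces $g_{v_2}(w,\cdot)\equiv 0$ on $T_{v_2}S$; combined with $g_{v_2}(w,v_2)=0$ (from the $g_{v_2}$-orthogonality of $v_2$ to $T_{v_2}S$ in Proposition \ref{fundamentalprop}), this places $w$ in the kernel of $g_{v_2}$ on all of $V$. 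The main subtlety lies precisely in this last step: the hypothesis alone only yields a non-positive direction for $g$, which a priori could remain strictly negative throughout; it is the combination of the first step with the connectedness of $S$ that rules out this possibility and produces a genuinely null direction at a boundary point.
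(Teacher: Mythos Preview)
Your proof is correct and follows essentially the same route as the paper. For part (i) both arguments compare the pseudo-norm balls with Euclidean ones via compactness of the indicatrix; for the first step of part (ii) both locate a positive-definite direction at a point of maximal Euclidean distance on $S$ and invoke \eqref{esconv}. The only genuine difference is in the second step of (ii): the paper simply asserts that a degenerate direction must exist (deferring to the reference [Lovas]), whereas you supply a self-contained connectedness argument on $S$ to produce a boundary point of the positive-definite locus and extract a kernel vector there. Your argument is slightly more roundabout than necessary---once you know $g_{v_2}$ is positive semi-definite on all of $V$ (by continuity from $U$) and not positive definite, any null vector is automatically in the kernel by Cauchy--Schwarz, so the detour through $T_{v_2}S$ is not needed---but it is valid.
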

\begin{proof}   Choose an auxiliary Euclidean norm $\| \cdot
\|^E$ and  denote $S^E$ its unit sphere.

For $(i)$, the compactness of the unit spheres of  $\| \cdot \|^E$
and $\| \cdot \|$ implies that each ball for one (pseudo-)norm
contains  small balls for the other one.

For $(ii)$, it is well-known that, at any  point $v_0$ in $S$
which is a relative maximum for the $\parallel \cdot
\parallel^E$-distance, the second fundamental form $\sigma^\xi$
with respect to $\xi=-v_0$ must be  positive definite (see
\cite[Ch VII, Prop. 4.6]{KN}). Thus, so is $g$ at $v_0$ (recall
Eq. \eqref{esconv} and Remark \ref{r26} together with last statement
of Proposition \ref{fundamentalprop}) and, as we assume
that  $\| \cdot \|$ is not a Minkowski norm, there must exist
$v_1\in V$ where $g$ is degenerate (see also \cite{Lovas}).

\end{proof}

\begin{rem} \label{rmpseudonorm}
These two properties cannot be extended to Minkowski conic norms
---this is trivial for $(ii)$, and see Example  \ref{ex3}
below for $(i)$.
\end{rem}

\subsection{Some simple examples}\label{sexamples} Theorem \ref{tfunda} provides a simple
picture of Minkows\-ki norms and their generalizations in terms of
the closed unit ball $B$ and the indicatrix $S$. Next, we consider $\R^2$
 and construct
illustrative examples of the generalizations of Minkowski
norms stated above ---easily, these examples can be  extended to higher
dimensional vector spaces.

Consider a curve $c: I \subset \R \rightarrow \R^2$,
$c(\theta)=r(\theta)(\cos \theta , \sin \theta )$, which does not
cross the origin and it is smoothly parameterized by its angular
polar coordinate $\theta \in I$. For each $\theta\in \R$ consider
the radial half line
$$
l_\theta=\{ r(\cos \theta, \sin \theta): r>0 \}.
$$
Recall that $c$ crosses transversally all $l_\theta$ with $\theta
\in I$, and let $n(\theta)$ be the normal vector characterized by
$\langle n(\theta),\dot c(\theta)\rangle_0 =0, -\langle n(\theta),
c(\theta)\rangle_0 >0$, where  $\langle \cdot ,\cdot \rangle_0$ is
the natural scalar product on $\R^2$. Put also $S=c(I)$ and
\be\label{hatg} \hat g(\theta)= \langle \ddot c(\theta),
n(\theta)\rangle_0 =\frac{2\dot r^2+r(r-\ddot r)}{\sqrt{r^2+\dot
r^2}}. \ee
 Recall that the sign of $\hat g(\theta)$ is equal to the sign of
the second fundamental form of $S$ at $c(\theta)$ with respect to
the direction $\xi=-c(\theta)$, and it is also equal to the sign
of the fundamental tensor field $g$ on the non-zero vectors of
$T_{c(\theta)}S$.

Let us construct then some examples.

\begin{exe}\label{ex1}
Assume that $c$ is a smooth closed curve, say, $I=[0,2\pi]$, and
$c$, as well as all its derivatives, agrees at $0$ and at $2\pi$.
By
 Theorem
\ref{tfunda} the interior region $B$ of $S$ is the unit ball of a
Minkowski pseudo-norm $\|\cdot\|_B$. Clearly $\hat g(\theta)\geq
0$ everywhere iff $S$ is  convex and $\|\cdot\|_B$ becomes a norm;
in this case, if $\hat g(\theta)$ is equal to zero only at
isolated points then $S$ is strictly convex and $\|\cdot\|_B$
satisfies the strict triangle inequality.
 The strict inequality $\hat g >0$ characterizes when
 $\|\cdot\|_B$ is a
Minkowski norm.
\end{exe}

\begin{exe}\label{ex2}
Assume  that $I=(\theta_-,\theta_+)$ with $\theta_+-\theta_-<\pi$.
 Now, $A=\cup_{\theta\in I}l_\theta$ constitutes a  convex
conic domain of $\R^2$, and take the closed subset $B$ of $A$
delimited by $l_{\theta_-}, l_{\theta_+}$ and $S$. Again, $B$ is
the closed unit ball for a Minkowski conic pseudo-norm
 $\|\cdot\|_B$, which will be a Minkowski conic norm
 iff $\hat g>0$  (recall that all the items of Theorem \ref{tfunda} will be
 applicable).

 If $c$ admits a smooth extension to $\theta_-$ or $\theta_+$ which
 is transverse to the corresponding $l_\theta$, then
 $\|\cdot\|_B$ can be regarded as the restriction to
 $A$ of a Minkowski conic pseudo-norm defined on a bigger domain
 $\tilde A$.   Recall that,  if $S$ is convex, $c$ can always be extended in $\R^2$ to  either $\theta_-$ or $\theta_+$
as the tangent line at any $\theta_0 \in I$ must intersect either
$l_{\theta_-}$ or $l_{\theta_+}$, but this extension may be
 non-transverse, or even the point zero (put $I=(0,\pi/2)$ and choose $c$ as
 the $\theta$-reparametrization of the  branch $x>x_0$ of the parabola $y=(x-x_0)^2$ for some $x_0\geq
 0$).

Observe that, by construction, $ S$ is always closed as a subset
of $A$. If $S$ is also closed as a subset of $V$ then $S$ cannot
be convex everywhere (essentially, $l_{\theta_-}$ and
$l_{\theta_+}$ would be parallel to asymptotes of $c$). In this
case, it would be also natural to extend
$\|\cdot\|_B$ to all $V$, so that the directions
 $l_{\theta_\pm}$, away from $A$, were degenerate (see part $(1)$ of Remark
\ref{rojo}).
\end{exe}
\begin{exe}\label{ex2185}
Let $c(\theta)= \theta (\cos\theta,\sin\theta)$, $\theta\in
(\epsilon,2\pi-\epsilon)$ for some $\epsilon>0$. Clearly, this
curve is convex (use \eqref{hatg}) and determines a Minkowski
conic norm $\|\cdot\|$. For small $\epsilon$, $\|\cdot\|$ has a
non-convex domain $A$ and satisfies: (i) it is not the restriction
to $A$ of a Minkowski norm on all $\R^2$, and (ii) there are
vectors $u,v\in A$ such that $u+v\in A$ but they do not satisfy
the triangle inequality (choose $u=(0,2\sin(2\varepsilon)),
v=(\cos (2\epsilon),-\sin( 2\varepsilon))$. Then $\|u\|=4\sin (2\varepsilon)/\pi$, $\|v\|=1/(\pi-2\varepsilon)$ and $\|u+v\|=1/(2\varepsilon)$. It is clear that
when $\varepsilon$ is small enough, $\|u\|+\|v\|<\|u+v\|$).
\end{exe}

\begin{exe}\label{ex2a}
Consider now two examples with $A=\{(x,y)\in \R^2: y>0\}$.
First, the curve $c_1$ obtained as the reparametrization of a
branch of the parabola $t\mapsto (t,\sqrt{1-t})$, $t\in (-\infty,1)$,
 with the angular coordinate $\theta\in (0,\pi)$. In this case
$\hat{g}>0$, the curve defines a Minkowski conic norm and
Proposition \ref{opennormballs} is applicable.  Second, the curve
$c_2$ obtaining as reparametrization of the straight line $y=1$
(i.e., the associated norm is $\|(x,y)\|:= y$). This is a conic
norm, but the forward and backward balls do not constitute a
subbasis for the topology of $\R^2$.  Moreover, all its
admissible curves connecting two fixed points will have the same
length and, according to Subsection \ref{s2d}, they will be
extremal of the energy functional (geodesics) when reparameterized
at constant speed.
\end{exe}

\begin{exe}\label{ex3}
 As a refinement of the previous example for future
referencing, consider the parabola $c(t)= (t,1-t^2), t\in \R$
which can be reparameterized with the angular coordinate $\theta
\in (-\pi/2,3\pi/2)$. This curve is strongly convex and defines a
Minkowski conic norm with only a direction excluded from the domain
---concretely, $A=\R^2\setminus (l_{-\pi /2}\cup\{0\}). $
 Its affine open balls (see \eqref{eopenballs})
centered at any $v_0=(x_0,y_0)$ are delimited
by some parabola as follows:
$$
B^+_{v_0}(r)= \{(x,y): (x,y)-v_0\in A;\,
y-y_0<r\left(1-\frac{(x-x_0)^2}{r^2}\right)\}.
$$
In this example (as well as in the first one in Example
\ref{ex2a}), the balls are always open for the topology of $V$
but, as two such parabolas always intersect, the topology
generated by the balls on $\R^2$ is strictly coarser than the
usual one (in particular, no pair of points are Hausdorff related
for that topology). Thus, the result in part $(i)$ of Proposition
\ref{ppseudi} cannot be extended to the case of Minkowski conic
norms.
\end{exe}

\begin{exe}\label{ex4}
Consider the curve $c(t)= (\sinh t,\cosh t ), t\in \R$, which can
be reparametrized by the angular coordinate $\theta \in
(\pi/4,3\pi/4)$ and, thus, can be regarded as a particular case of
Example \ref{ex2}. This curve defines a conic pseudo-norm
$\parallel\cdot\parallel$ with $S$ concave everywhere (i.e. $\hat
g<0$), which can be interpreted as follows.

Consider the natural Lorentzian scalar product on $\R^2$:
$$
\langle (x,y),(x',y')\rangle_1 =x x' -y y'.
$$
Then, $A$ is composed of all the vectors $v=(x,y)$ which are {\em
timelike} ($\langle (x,y),(x,y)\rangle_1 <0$) and {\em
future-directed} ($y>0$); moreover:
$$
\| v \|
=\sqrt{-\langle (x,y),(x,y)\rangle_1}.
$$ Here, the (forward) affine balls
centered at the origin can be described as:
$$
B^+_0(r)= \{v\in A: -\langle v,v\rangle_1 < r^2\}, $$ which is the
open region delimited by a hyperbola and the lines $y=\pm x$.

Remarkable,  the triangle inequality does not hold, but a reverse
strict triangle inequality does, namely:
$$
\| v+w\| \; \geq \; \| v \| + \| w
\|, \quad \quad \forall v,w\in A
$$
with equality iff $w=\lambda v$ for some $\lambda>0$ (this and the
following property are well-known for Lorentzian scalar products,
see \cite[Proposition 5.30]{Oneill83}, for example). By applying
this triangle inequality, the following property follows easily.
For all $v_0\in A$ and $\epsilon>0$ there exists a sequence
$P_0=0, P_1, \dots, P_k=v_0$ such that each $v_i=P_i-P_{i-1}$
belongs to $A$ and $\sum_{i=1}^k \| v_i \| < \epsilon$. That is,
there are poligonal curves connecting 0 and $v_0$ with arbitrarily
short $\| \cdot \|$-length.

 Finally, recall also that
$\|\cdot\|$ can be naturally extended to {\em past-directed}
timelike vectors, or even all the vectors  where the Lorentzian
scalar product does not vanish, yielding a bigger conic domain
$\tilde A$ with four connected parts. Such a situation becomes
natural for {\em quadratic Finsler} manifolds (see part (1) of
Remark \ref{r1} below).

\end{exe}

\section{Pseudo-Finsler and conic Finsler metrics}\label{s2}

\subsection{Notion} First, let us  clarify the notions of
 Finsler metric to be used.
\begin{defi}\label{d1}
Let $M$ be a  manifold and $A$ an open subset of the tangent
bundle $TM$ such that $\pi(A)=M$, where $\pi:TM\rightarrow M$ is
the natural projection, and let $F:A\rightarrow
[0,\infty)$ be a continuous function. Assume that $(A,F)$ satisfies: 
\begin{enumerate}[(i)]
\item  $A$ is conic in $TM$, i.e., for every $v\in A$ and
$\lambda>0$, $\lambda v\in A$ ---or, equivalently, for each $p\in
M$, $A_p:=A\cap T_pM$ is a conic domain in $T_pM$.
\item $F$ is
smooth on $A$ except at most on the zero vectors.
\end{enumerate}
 We say that $(A,F)$, or simply $F$, is a {\em conic pseudo-Finsler metric} if  each
restriction $F_p:=F|_{A_p}$ is a Minkowski conic pseudo-norm on
$T_pM$. In this case, $F$ is

\begin{enumerate}[(i)]
\item[(iii)]  a {\em conic Finsler metric} if
each $F_p$ is a Minkowski conic norm, i.e. the {\em fundamental
tensor} $g$ on $A\setminus\{$zero section$\}$ induced by all the
fundamental tensor fields $g^{(p)}$ at each $ p\in M$ on
$A_p\setminus \{0\}$ is positive definite for all $p\in M$,
\item[(iv)] a {\em pseudo-Finsler metric} if $A=TM$, i.e.,
each $F_p$ is a Minkowski pseudo-norm so that $A_p=T_pM$ for all
$p\in M$,
\item[(v)] a (standard) {\em  Finsler metric} if $F$
is both, conic Finsler and pseudo-Finsler, i.e. $A=TM$ and $g$ is
pointwise positive definite.
\end{enumerate}

\end{defi}
Observe that the fundamental tensor $g$ can be thought as a
section of a  fiber bundle over $A$. To be more precise,
denote also as $\pi:A\setminus\{0\}\rightarrow M$ the
restriction of the natural projection from the tangent bundle to
$M$,  $TM^*$ the cotangent bundle of $M$ and
$\tilde{\pi}:TM^*\rightarrow M$ the natural projection. Define
$\tilde{\pi}^*:\pi^*(TM^*)\rightarrow A\setminus\{0\}$ as the
fiber bundle obtained as the pulled-back bundle of
$\tilde{\pi}:TM^*\rightarrow M$ through
$\pi:A\setminus\{0\}\rightarrow M$: \be \label{efibrados}
\xymatrix{
\pi^*(TM^*)\ar[d]_{\tilde{\pi}^*}&TM^*\ar[d]^{\tilde{\pi}}\\
A\setminus\{0\}\ar[r]_{\pi}&M\, }\ee Then $g$ is a smooth
symmetric section of the fiber  bundle $\pi^*(TM^*)\otimes
\pi^*(TM^*)$ over $A\setminus\{0\}$. Let us remark that if we fix
a vector $v\in A\setminus\{0\}$, then $g_v$ is a symmetric
bilinear form on $T_{\pi(v)}M$. From now on this will be the
preferred notation.

\begin{rem}\label{r1} Some comments are in order:
\begin{enumerate}[(1)]
\item Our definition of  (standard) Finsler metric agrees with
classical references as \cite{AP,Akbar-Zadeh06,BaChSh00,ChSh05, Mo06,shen2001,
Sh01} and our definition of conic Finsler metric is equivalent to the notion of {\em generalized Finsler metric} by \cite{Br} (but we retain our nomenclature as we are also dealing with other generalizations).  According to our nomenclature, a classical Kropina metric
is a {\em conic Finsler} metric (see Corollary \ref{cKropina}
below).
 A  Matsumoto metric has a
maximal conic domain where it is {\em conic Finsler} and a bigger
one where it is {\em conic pseudo-Finsler} (see Corollary
\ref{matsustrongly} below). Another way to generalize Finsler
metrics is considering what we would call a {\em quadratic Finsler
metric} $L$ (here $L$ would be positively homogeneous of degree 2,
and the definition would include all the Lorentzian or
semi-Riemannian metrics). This approach can be found in
\cite{AIM93,Asa85, Mat86,Per08}.
In principle, the case of conic quadratic Finsler metrics is more
general as one can always  consider the conic quadratic Finsler
metric  $L=F^2$  associated to any conic pseudo-Finsler metric.
The converse holds only when $L$ is positive away from the zero
section (otherwise, one can define just a conic pseudo-Finsler
metric on the positive-definite directions). Nevertheless, it is
natural to assume then some minimum restrictions which essentially
reduces to our case\footnote{\ See, for example, the definition of
Finsler spacetime in \cite{PW}. Its associated Finsler metric
satisfies all the assumptions of a conic pseudo-Finsler metric
(except strict positive definiteness) plus other additional
hypotheses.}. So, we will focus on the already general
conic pseudo-Finsler case, which allows one to clarify some properties of the distance and balls.
Let us finally point out that, in reference \cite{BeFa2000}, the authors define
a Finsler metric as what we call a conic Finsler metric and a pseudo-Finsler metric as a
conic quadratic Finsler metric with nondegenerate fundamental tensor.

\item Minkowski conic pseudo-norms as those studied in
Subsection \ref{sexamples}
give the first examples of conic pseudo-Finsler metrics. Starting
at them, one can yield examples of more general situations. For
instance, consider the following two conic pseudo Finsler metrics
on $\R^2$:
\begin{enumerate}[(a)]
\item  $A=\{v\in T\R^2: dy(v)>0\}$ and $F=dy|_A$,

\item $A=\{v\in T\R^2: dx(v)\neq 0, dy(v)\neq 0\}$ and
$F=\sqrt{dx^2}+\sqrt{dy^2}$.
\end{enumerate}

Recall that they come from Minkowski conic pseudo-norms with
non-compact and convex indicatrices, which are not strongly convex
at any point (so that only the non-strict triangle inequality will
 hold). It is not difficult to accept that a
 conic pseudo-Finsler metric $F^a$ on $\R^2$ can be defined such that
$F^a_{(x,y)}$ behaves as the metric in item (a) for $|y|< 1$,
as the conic Minkowski norm in Example \ref{ex3} for $y<-2$ and as
the Minkowski pseudo-norm in the Example \ref{ex4} for $y>2$.
Analogously, a Finsler metric $F^b$ on $\R^2$ can be defined such
that $F^b_{(x,y)}$ behaves as the metric in (b) for $x^2+y^2< 1$
and as the Finsler metric associated to the usual scalar product
on $\R^2$ for $x^2 +y^2>2$. In particular, the conic domain $A_p$
may vary from point to point so that such domains are not
homeomorphic.

\item  As the last example shows,  the number of connected
components of $A_p$ may vary with $p$, and $A$ may be connected
even if some $A_p$ are not. Recall that for a Minkowski conic
pseudo-norm $\parallel\cdot\parallel$, the conical domain $A$
might have (infinitely) many connected parts, even though we
consider typically the case $A$ connected. Accordingly,  a natural
hypothesis in the conic pseudo-Finsler case is to assume that $A$
is {\em pointwise connected} (i.e., any $A_p$, $p\in M$, is
connected) and, in the conic Finsler case,  we also may assume
that $A$ is {\em pointwise convex} ($A_p$ is a
 convex subset, and then connected, of each $T_pM$) in order to have the triangle
 inequality.

\item Typically, one can also   assume that the pair $(A,F)$ is
{\em maximal}, i.e., such that no pair $(\tilde A, \tilde F)$
extends $(A,F)$. This means that no conic pseudo-Finsler metric
$(\tilde A, \tilde F)$ satisfies $A\varsubsetneq \tilde A$ with
$\tilde F|_{A}=F$ (say, as in Examples \ref{ex3} or \ref{ex4}).
Zorn's lemma ensures the existence of maximal extensions. However,
as such an extension may be highly non-unique (recall Example
\ref{ex2} or the first one in Example \ref{ex2a}), and maximality
will not be assumed a priori along this paper. Moreover,
non-maximal pairs $(A,F)$ may be useful to model diverse
situations ---for example, to represent restrictions to the
possible
 velocities on relativistic particles which may move on $M$.


\item As a direct consequence of part $(ii)$ of Proposition
\ref{ppseudi}, {\em the fundamental tensor $g$ of a pseudo-Finsler
but not Finsler manifold must be degenerate at some points} ,
concretely, it must be degenerate on some directions at all the
tangent spaces $T_pM$, $p\in M$, where $g^{(p)}$ is not
definite positive.

\end{enumerate}
\end{rem}
Some properties of  conic pseudo-Finsler metrics can be reduced to
the case $A=TM$ taking into account the following result.

\begin{prop}\label{punidad}
Let $F: A\rightarrow \R$ be a conic pseudo-Finsler metric and
let $C\subset A$ be such that $C\cup \{\text{zero section}\}$ is a closed conic subset
of $TM$. Then, there exists a pseudo-Finsler metric $\tilde F$ on $M$
such that $\tilde F|_{C}=F|_{C}$.
\end{prop}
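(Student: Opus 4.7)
The plan is to extend $F$ to a pseudo-Finsler metric on $TM$ by gluing it against an auxiliary Riemannian norm through a smooth bump function on the unit sphere bundle, and then propagating the result back to $TM$ by positive homogeneity.

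First, I would fix an auxiliary Riemannian metric $h$ on $M$, with associated norm $\|\cdot\|^h$ and unit sphere bundle $SM := \{v \in TM : \|v\|^h = 1\}$; this $SM$ is a smooth paracompact submanifold of $TM$ disjoint from the zero section. Since $C \cup \{\text{zero section}\}$ is closed in $TM$, the set $K := C \cap SM$ is closed in $SM$. Moreover $K$ is contained in the open set $U := A \cap SM$, on which $F$ is smooth and strictly positive (the latter because $F_p$ is a Minkowski conic pseudo-norm on each $A_p$, and $v \in U$ implies $v \neq 0$).

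By paracompactness I would choose a smooth function $\chi : SM \to [0,1]$ with $\chi \equiv 1$ on a neighbourhood of $K$ and $\mathrm{supp}(\chi) \subset U$, and define $\tilde f : SM \to \R$ by
\[\tilde f(u) = \chi(u)\, F(u) + \bigl(1 - \chi(u)\bigr),\]
with the convention that $\chi(u)\, F(u) := 0$ wherever $\chi(u) = 0$. Then $\tilde f$ is smooth on $SM$ and strictly positive: on the neighbourhood of $K$ it equals $F > 0$; outside $\mathrm{supp}(\chi)$ it equals $1$; elsewhere, both $\chi F$ and $1 - \chi$ are nonnegative with $1 - \chi > 0$. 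I then set $\tilde F(0_p) = 0$ for each $p \in M$ and
\[\tilde F(v) = \|v\|^h \cdot \tilde f\!\left(v/\|v\|^h\right) \quad \text{for } v \neq 0.\]

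The remaining verifications are routine. Positive homogeneity is immediate; positivity with equality only at $v=0$ follows from $\tilde f > 0$; smoothness on $TM$ away from the zero section comes from smoothness of $\tilde f$ combined with that of $v \mapsto v/\|v\|^h$; continuity at the zero section follows from local boundedness of $\tilde f$ on $SM$. To check agreement on $C$: if $v \in C$ and $v \neq 0$, then conicity of $C$ gives $v/\|v\|^h \in K$, where $\tilde f = F$, so $\tilde F(v) = \|v\|^h\, F(v/\|v\|^h) = F(v)$ by positive homogeneity of $F$. Each $\tilde F_p$ therefore fulfils conditions $(i)$, $(ii)$ and $(c1)$ of Definition \ref{dnorms}, so it is a Minkowski pseudo-norm and $\tilde F$ is a pseudo-Finsler metric in the sense of Definition \ref{d1}. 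The main obstacle is arranging the gluing so that $\tilde f$ remains strictly positive while still coinciding with $F$ on the whole of $K$; this is handled by the insistence that $\chi \equiv 1$ on a neighbourhood of $K$ and $\mathrm{supp}(\chi) \subset U$, whose existence is a standard application of paracompactness of the manifold $SM$.
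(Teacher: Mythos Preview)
Your proof is correct and follows essentially the same route as the paper: both fix an auxiliary Riemannian metric, restrict to its unit sphere bundle, glue $F$ to the Riemannian norm via a partition-of-unity/bump function (your $\chi$ and $1-\chi$ are exactly the paper's $\mu_V$ and $\mu_U$, and your additive constant $1$ is just $F_R$ evaluated on the unit sphere), and then extend by homogeneity. One tiny quibble: in your positivity check the ``elsewhere'' case need not satisfy $1-\chi>0$, but since any point with $\chi(u)>0$ lies in $U$ and hence has $F(u)>0$, the convex combination $\chi(u)F(u)+(1-\chi(u))$ is still strictly positive, so the conclusion stands.
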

\begin{proof}
Consider some auxiliary Riemannian metric $g_R$ on $M$, let $F_R=\sqrt{g_R}$
be its associated Finsler metric, and take its unit sphere bundle
$S_RM\subset TM$. Let $\{U,V\}$ be the open covering of $S_RM$
defined as  $U=S_RM\cap (TM\setminus C)$ and $V=S_RM\cap A$.
Consider  a partition of the unity  $\{\mu_U, \mu_V\}$
subordinated to this covering, and regard these functions as
functions on $TM\setminus\{$zero section$\}$ just by making
them homogeneous of degree $0$. The pseudo-Finsler metric $\tilde F=
\mu_U F_R + \mu_V F$ satisfies the required properties.
\end{proof}

\begin{rem} \label{runidad}
Recall that, in the previous proposition, the functions which
constitute the partition of the unity are not defined on $M$ but
on $TM$. So, when applied to a conic Finsler metric, the obtained
extension is only a conic pseudo-Finsler metric (see Example
\ref{ex2185}). \end{rem}

\subsection{Admissible curves and Finslerian separation}  In this subsection, $M$ will be assumed to be connected in order to
avoid trivialities. Consider a smooth curve $\alpha$ with
velocity $\dot \alpha$ in a conic pseudo-Finsler manifold $(M,F)$.
As the expression $F(\dot\alpha)$ does not always make sense, we
need to restrict to curves where it does.
\begin{defi}
Let $F:A\rightarrow [0,\infty)$ be a conic pseudo-Finsler metric
on $M$. A piecewise smooth curve $\alpha:[a,b]\rightarrow M$ is
{\em $F$-admissible} (or simply {\em admissible}) if the right and
left derivatives $\dot\alpha_+(t)$ and $\dot\alpha_-(t)$ belong to
$A$ for every $t\in [a,b]$. In this case, the {\em ($F$-)length}
of $\alpha$ is defined as
\begin{equation}\label{lengthfunctional}
\ell_F(\alpha)=\int_a^b F(\dot\alpha(t))\df s.
\end{equation}
\end{defi}

\begin{defi}
Let $(M,F)$ be a conic pseudo-Finsler manifold, and $p,q\in M$. We
say that $p$ {\em precedes} $q$, denoted $p\prec q$, if there
exists an admissible curve  from $p$ to $q$. Accordingly, the {\it
future} and the {\it past} of $p$
 are the subsets
 \[
 \begin{array}{ccc}
 {\rm I}^+(p)=\{q\in M: p\prec  q\}&\text{and}&
 {\rm I}^-(p)=\{q\in M: q \prec  p\},
\end{array}
 \]
respectively. Moreover, we define ${\rm I}^+=\{(p,q)\in M\times M:
p\prec  q\}$, i.e., ${\rm I}^+$ is equal to the binary relation $\prec
$ as a point subset of $M\times M$.

\end{defi}
 It is straightforward to check the following properties.

\begin{prop}\label{pimas} Given a conic pseudo-Finsler manifold $(M,F)$:
\begin{enumerate}[(i)]
\item The binary relation $\prec $ is transitive.

\item The subsets ${\rm I}^+(p)$ and ${\rm I}^-(p)$ are open in $M$
for all $p\in M$, and ${\rm I}^+$ is open in $M\times M$.

\item If $F$ is a pseudo-Finsler metric then $\prec$ is trivial,
i.e.,  $p\prec  q$ for all $p,q$  in the (connected) manifold
$M$.
\end{enumerate}
\end{prop}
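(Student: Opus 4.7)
The plan is to prove the three parts in turn, with part (ii) being the only nontrivial one.

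For part (i), I would simply concatenate admissible curves. If $\alpha \colon [a,b] \to M$ is an $F$-admissible curve from $p$ to $q$ and $\beta \colon [b,c] \to M$ is $F$-admissible from $q$ to $r$, the concatenation $\gamma$ on $[a,c]$ is piecewise smooth. At every $t \in [a,c]$ the one-sided derivatives $\dot\gamma_\pm(t)$ coincide with those of $\alpha$ or $\beta$ (at the junction $t = b$, we use $\dot\alpha_-(b)$ and $\dot\beta_+(b)$), so they all lie in $A$. Hence $p \prec r$.

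For part (ii), it suffices to show that $\mathrm{I}^+$ is open in $M \times M$; the openness of $\mathrm{I}^\pm(p)$ then follows by fixing $p$. Given $(p_0,q_0) \in \mathrm{I}^+$, fix an admissible curve $\alpha \colon [0,1] \to M$ with $\alpha(0)=p_0$, $\alpha(1)=q_0$. The plan is to replace a small initial segment of $\alpha$ by an admissible curve starting at a nearby point $p$, and likewise at the final segment near $q_0$, then concatenate with the untouched middle piece using part (i). To construct the initial modification, I choose a coordinate chart around $p_0$ with $p_0 \leftrightarrow 0$ and $\dot\alpha_+(0) \leftrightarrow v_0$. Since $(0,v_0) \in A$ and $A$ is open, there is $r>0$ with
\begin{equation*}
\{(x,v) : |x|<r,\; |v-v_0|<r\} \subset A
\end{equation*}
in coordinates. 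For small $\epsilon>0$, the coordinates of $q' := \alpha(\epsilon)$ satisfy $x_{q'} = \epsilon v_0 + O(\epsilon^2)$. Then for any $p$ with $|x_p| < \epsilon^2$, the straight-line segment $\beta(t) = x_p + t(x_{q'}-x_p)/\epsilon$, $t \in [0,\epsilon]$, has constant velocity $w = v_0 + O(\epsilon)$ and stays within $O(\epsilon)$ of $0$, so for $\epsilon$ small it is admissible and joins $p$ to $q'$. An analogous construction near $q_0$ produces, for $q$ in a neighborhood $V$ of $q_0$, an admissible curve from $q'' := \alpha(1-\epsilon)$ to $q$. Concatenating these with $\alpha|_{[\epsilon,1-\epsilon]}$ via part (i) shows that a neighborhood of $(p_0,q_0)$ lies in $\mathrm{I}^+$.

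Part (iii) is immediate: when $A = TM$, every piecewise smooth curve is automatically $F$-admissible, and a connected manifold is piecewise-smoothly path-connected, so $p \prec q$ for every $p,q \in M$.

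The main obstacle is the openness construction in part (ii): one must ensure that the small replacement curve has both its base points and velocities remain in $A$. The key observation is that $A$ is open in $TM$, so a simultaneous smallness condition on the coordinate position of $p$ (relative to $\epsilon^2$) and on $\epsilon$ itself keeps the straight-line velocity $w$ within the $r$-ball around $v_0$ and the curve within the $r$-ball around $0$; then membership in $A$ is preserved throughout.
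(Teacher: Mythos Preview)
Your proof is correct and follows essentially the approach the paper intends: the paper itself gives no detailed argument, stating only that the properties are ``straightforward to check'' and, for $(ii)$, pointing to the techniques in the proof of Proposition~\ref{openballs} (coordinate charts, straight-line segments in coordinates, and openness of $A$ in $TM$), which is exactly what you carry out.
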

(For $(ii)$, recall the techniques in the proof of Proposition
\ref{openballs} below).

Now, we can introduce a generalization of the so-called
Finsler distance for Finsler manifolds.
\begin{defi}
Let $(M,F)$ be a conic pseudo-Finsler manifold $p,q\in M$ and
 $C_{p,q}^F$ the set of all the $F$-admissible
piecewise smooth curves from $p$ to $q$.  The {\em Finslerian
separation}  from $p$ to $q$ is defined as:
\[{\dist}_F(p,q)=\inf_{\alpha\in C_{p,q}^F}\ell_F(\alpha)\in [0,\infty].\]
\end{defi}
From the definitions, one has directly: \begin{equation}
\label{sep1} C_{p,q}^F\neq \emptyset \Leftrightarrow p\prec   q,
\quad {\dist}_F(p,q)=\infty \Leftrightarrow p\not\prec   q, \quad
{\dist}_F(p,q)\geq 0,\end{equation} as well as the triangle
inequality \begin{equation} \label{sep2} \dist_F(p,q)\leq
\dist_F(p,z)+\dist_F(z,q), \quad \forall p,q,z \in
M.\end{equation}

Analogously, we can define two kinds of balls, the  {\it (open)
forward $d_F$-balls}  $B_F^+(p,r)$ and the {\it backward} ones
$B_F^-(p,r)$, namely,
\[B_F^+(p,r)=\{q\in M: \dist_F(p,q)<r\}, \quad
B^-_F(p,r)=\{q\in M:\dist_F(q,p)<r\}.\]

\begin{prop}\label{openballs} Let $(M,F)$ be a conic pseudo-Finsler manifold.
Then the open forward and backward $d_F$-balls are open subsets.
\end{prop}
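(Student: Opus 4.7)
The plan is to prove openness of $B_F^+(p,r)$, the backward case being analogous. Fix $q\in B_F^+(p,r)$ and pick an $F$-admissible piecewise smooth curve $\alpha\colon[a,b]\to M$ from $p$ to $q$ with $\ell_F(\alpha)<r$. The idea is to truncate $\alpha$ at some time $b-\eta$ (for small $\eta>0$), landing at an intermediate point $q_0:=\alpha(b-\eta)$, and then prolong the truncation to any nearby target $q'$ by a short ``chart-straight-line'' from $q_0$ to $q'$. For $\eta$ sufficiently small and $q'$ in a sufficiently small neighborhood of $q$, this concatenation will be admissible and of total length less than $r$, which forces $q'\in B_F^+(p,r)$.

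To set this up, choose $\eta_0>0$ so small that $\alpha|_{[b-\eta_0,b]}$ is smooth with image in the domain of a chart $\phi\colon U\to\R^n$, and for $\eta\in(0,\eta_0]$ and $q'\in U$ define
$$
\beta_{\eta,q'}\colon[0,1]\to M,\qquad \beta_{\eta,q'}(t)=\phi^{-1}\bigl(\phi(q_0)+t\,w_{q'}\bigr),\quad w_{q'}:=\phi(q')-\phi(q_0).
$$
In the trivialization $TU\cong\phi(U)\times\R^n$, the velocity of $\beta_{\eta,q'}$ is the constant vector $w_{q'}$, and a Taylor expansion yields $w_q=\eta v_0+o(\eta)$ with $v_0:=d\phi_{q_0}(\dot\alpha(b-\eta))$, the chart representative of a vector in $A_{q_0}$. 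Since $A$ is open in $TM$, a product neighborhood of $(q_0,v_0)$ lies in $A$; combining this with conicity of each $A_x$, the fact that all positions $\phi(q_0)+tw_q$, $t\in[0,1]$, lie within distance $O(\eta)$ of $\phi(q_0)$, and the fact that $w_q/|w_q|$ tends to $v_0/|v_0|$ as $\eta\to 0$, one concludes that the whole image of $\beta_{\eta,q}$ in $TM$ lies in $A$ for small $\eta$. By joint continuity in $(t,q')$ and compactness of $[0,1]$, the same holds for $\beta_{\eta,q'}$ whenever $q'$ varies in a small enough neighborhood $V$ of $q$.

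It remains to estimate lengths. Using positive homogeneity of $F$,
$$
\ell_F(\beta_{\eta,q})=\eta\int_0^1 F\bigl(\beta_{\eta,q}(t),w_q/\eta\bigr)\,dt=\eta F(\dot\alpha(b^-))+o(\eta),
$$
while $\ell_F(\alpha|_{[a,b-\eta]})=\ell_F(\alpha)-\eta F(\dot\alpha(b^-))+o(\eta)$, so the total length of the concatenation is $\ell_F(\alpha)+o(\eta)<r$ for $\eta$ small. Continuity of $F$ on $A$ then yields the same strict inequality for all $q'$ in a small enough sub-neighborhood of $V$, giving $d_F(p,q')<r$ as required. The main obstacle is the admissibility step: one must ensure that the \emph{entire} velocity curve of $\beta_{\eta,q'}$ in $TM$ lies in $A$, uniformly for $q'$ in an open neighborhood of $q$, and the combination of openness of $A\subset TM$, fiberwise conicity, and the smallness of $\eta$ is precisely what makes this possible.
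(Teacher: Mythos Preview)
Your proof is correct and follows essentially the same strategy as the paper's: truncate an admissible curve from $p$ to $q$ slightly before its endpoint, then reach nearby points $q'$ by chart-straight segments whose admissibility and short length are ensured by the openness and fiberwise conicity of $A$. The only differences are cosmetic: the paper works with an arclength parametrization and controls lengths via a uniform bound $N_0$ for $F$ on a compact cone of directions, whereas you use the asymptotic estimate $\ell_F(\alpha|_{[a,b-\eta]})+\ell_F(\beta_{\eta,q})=\ell_F(\alpha)+o(\eta)$; also, your $v_0:=d\phi_{q_0}(\dot\alpha(b-\eta))$ depends on $\eta$, so it would be slightly cleaner to anchor the product neighborhood at the fixed limit $d\phi_q(\dot\alpha(b^-))$ instead.
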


\begin{proof}
Let $q\in B_F^+(p,r)$ and $\alpha: [0,r_0]\rightarrow M$  be an
 admissible curve from $p$ to $q$ parametrized by arclength, so that $r_0<r$ and
$\dot\alpha(r_0)$ belongs to $A\cap T_qM$. Let $K$ be a compact
neighborhood of $\dot\alpha(r_0)$ in the unit ball at $T_qM$
entirely contained in  $A$, and $C$ the corresponding conical
subset $C=\{tv: v\in K, t>0\}$. Choosing coordinates $(U,\varphi)$
in a neighborhood $U$ of $q$, $TU$ can be written as
$\varphi(U)\times \R^n$ and $C$ is identified with some
$\{\varphi(q)\}\times C_0$, namely $C_0=\{tu: u\in K_0, t>0\}$ is
a conical subset of $\R^n$, and the compact subset $K_0$ is
defined by $ d\varphi(K)=\{q\}\times K_0$. Moreover, choosing a
smaller $U$ if necessary (say, with compact closure), we can
assume $\varphi(U)\times C_0\subset d\varphi(A)$ and $F\circ
\varphi^{-1}$ is bounded on $\varphi(U)\times K_0$ by some
constant $N_0\geq 1$. So, choose some $r_-<r_0$ with
$r_0-r_-<(r-r_0)/N_0$, such that $\alpha([r_-,r_0])\subset U$,
 the segment from $\varphi(\alpha(r_-))$ to $\varphi(\alpha(r_0))$ is
contained in $U$ and
there
exists $\lambda>1$ with
$\frac{\lambda}{r_0-r_-}(\varphi(\alpha(r_0))-\varphi(\alpha(r_-)))\in
K_0$. (Observe that we can always find such an $r_-$ because
\[\lim_{r_-\to r_0}
\frac{\varphi(\alpha(r_0))-\varphi(\alpha(r_-))}{r_0-r_-}=d\varphi(\dot\alpha(r_0))\]
and $d\varphi(\dot\alpha(r_0))\in K_0$).  The segments in
$\varphi(U)$ (regarded as a subset of $\R^n$) which start at
$\alpha(r_-)$ with velocity in $K_0$ and defined on some interval
$[r_-, b]$ with $b<r_-+(r-r_0)/N_0]$ have $F$-length smaller than
$r-r_0$ and cover a neighborhood $\varphi(W)$ of $\varphi(q)$. So,
the neighborhood $W\ni q$ is clearly contained in the required
ball. The proof for backward balls is analogous.
\end{proof}
\begin{defi} A conic pseudo-Finsler metric $F$ is {\em (Riemannianly)
lower bounded} if  there exists a Riemannian metric
$g_0$ on $M$  such that $F(v)\geq \sqrt{g_0(v,v)}$ for
every $v\in A$. In this case,  $g_0$ is called a {\em
(Riemannian) lower bound} of $F$.
\end{defi}
\begin{rem}\label{rprevio}
Several comments are in order:
\begin{enumerate}[(1)]
\item[(1)] The lower boundedness  needs to be checked
only locally, that is, {\em a conic pseudo-Finsler metric $F$ is
lower bounded if and only if for every point $p\in M$, there
exists a neighborhood $U\subset M$ of $p$ and a Riemannian metric
 $g$ on $U$ such that $F(v)\geq\sqrt{g(v,v)}$ for every $v\in
A\cap TU$.}
 In fact, each $U$ can be chosen compact and, then, all
the metrics $g$ can be taken homothetic to a fix auxiliary one
$g_R$ defined on all $M$. By using paracompactness,   a locally
finite open covering $\{U_i\}_{i\in I}$ of $M$ exists such that
 $F(v)\geq \frac{1}{c_i}\sqrt{g_R(v,v)}$ for
some $c_i\geq 1$ and all $v\in A\cap TU_i$. If $\{\mu_i\}_{i\in
I}$ is a partition of the unity subordinated to $\{U_i\}_{i\in
I}$, then $g_0=\frac{1}{\sum_{i\in I}\mu_ic_i}g$ is the required
lower bound of $F$.

\item[(2)] The lower boundedness of $F$ can be also checked by looking at
the indicatrix $S_p\subset A\cap T_pM, p\in M$ for each $F_p$.
Indeed, {\em $F$ is lower bounded if and only if for each $p\in M$
there exists a compact neighborhood $\hat K$ of  $0\in T_pM$ in
$TM$ such that $S_q\subset \hat K$ for all $q\in K:=\pi(\hat K)$.}

\item[(3)] Either from the definition or from the criteria above,
it is straightforward to check that  classical Kropina and
Matsumoto metrics  (see Subsection \ref{s3b} for their definition)
are lower bounded.

\item[(4)] Observe that the distance $d_{g_0}$ associated to the
lower bound $g_0$ satisfies $d_{g_0}(p,q)\leq d_F(p,q)$ for every
$p,q\in M$ and, then, $ B^+_F(p,\epsilon)\subseteq
B_{g_0}(p,\epsilon)$.
\end{enumerate}
\end{rem}

\begin{lemma}\label{basis}
Let $F:A\rightarrow [0,+\infty)$ be a conic pseudo-Finsler metric
in $M$ that admits a lower bound $g_0$. Fix $p\in M$ and $r>0$.
 Given $x\in B_{g_0}(p,r)$, there exist $q, q'\in B_{g_0}(p,r)$
 and $\epsilon>0$ such
that $x\in B^+_F(q,\epsilon)\cap B^-_F(q',\epsilon)$ and
$B^+_F(q,\epsilon)\cup B^-_F(q',\epsilon)\subseteq
B_{g_0}(p,r)$.
\end{lemma}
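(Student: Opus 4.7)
The plan is to build the points $q$ and $q'$ as endpoints of very short admissible curves respectively arriving at and departing from $x$, then use the inequality $d_{g_0}\leq d_F$ (recalled in Remark \ref{rprevio}(4)) together with the triangle inequality for $d_{g_0}$ to place the associated $F$-balls inside $B_{g_0}(p,r)$. Set $\delta_0:=r-d_{g_0}(p,x)>0$ and fix in advance $\epsilon\in(0,\delta_0/4)$; the goal will be to choose $q,q'$ close enough to $x$ in $g_0$-distance (say within $\delta_0/4$) and satisfying $d_F(q,x)<\epsilon$, $d_F(x,q')<\epsilon$.

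To construct $q$, pick any $v\in A_x$ (non-empty since $\pi(A)=M$) and a chart $(U,\varphi)$ centered at $x$ with $\varphi(x)=0$. For $\delta>0$ small define $\alpha:[0,\delta]\to M$ by $\alpha(s)=\varphi^{-1}((s-\delta)v)$, so $\alpha(0)=:q$ and $\alpha(\delta)=x$, with $\dot\alpha(s)=d\varphi^{-1}_{(s-\delta)v}(v)$. At $s=\delta$ this equals $v\in A_x$, hence by openness of $A\subset TM$ and continuity of $s\mapsto\dot\alpha(s)$, for $\delta$ sufficiently small $\dot\alpha(s)\in A$ for every $s\in[0,\delta]$, so $\alpha$ is admissible; moreover, the map $s\mapsto F(\dot\alpha(s))$ is continuous on a compact interval and hence bounded by some $N>0$ independent of $\delta$ (one may argue as in the proof of Proposition \ref{openballs} using a compact neighbourhood of $v$ in $A_x$). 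Therefore $\ell_F(\alpha)\leq N\delta\to 0$ and $q=\varphi^{-1}(-\delta v)\to x$ as $\delta\to 0$. Shrinking $\delta$ we obtain $d_F(q,x)<\epsilon$ and $d_{g_0}(p,q)<d_{g_0}(p,x)+\delta_0/4$. Symmetrically, running $\tilde\alpha(s)=\varphi^{-1}(sv)$ on $[0,\delta]$ yields a short admissible curve from $x$ to $q':=\varphi^{-1}(\delta v)$ with $d_F(x,q')<\epsilon$ and $d_{g_0}(p,q')<d_{g_0}(p,x)+\delta_0/4$.

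By construction $x\in B^+_F(q,\epsilon)\cap B^-_F(q',\epsilon)$. For any $y\in B^+_F(q,\epsilon)$, using the lower bound,
\[
d_{g_0}(p,y)\leq d_{g_0}(p,q)+d_{g_0}(q,y)\leq d_{g_0}(p,q)+d_F(q,y)<d_{g_0}(p,x)+\tfrac{\delta_0}{4}+\epsilon<r,
\]
so $B^+_F(q,\epsilon)\subseteq B_{g_0}(p,r)$; the same computation applied to $y\in B^-_F(q',\epsilon)$ gives $B^-_F(q',\epsilon)\subseteq B_{g_0}(p,r)$, which concludes the proof.

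The only mildly delicate point is the uniform bound on $F\circ\dot\alpha$ used to control $\ell_F(\alpha)$ as $\delta\to 0$; this is harmless because $F$ is smooth on $A$ and we are working on a compact piece of a single integral curve of the constant coordinate vector field, exactly the type of bound already exploited in Proposition \ref{openballs}. Everything else is a direct application of the triangle inequality for $d_{g_0}$ and the inequality $d_{g_0}\leq d_F$.
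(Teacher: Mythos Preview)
Your proof is correct and follows essentially the same approach as the paper's: both arguments select $q$ and $q'$ as nearby points on an admissible curve through $x$, then use the inclusion $B^{\pm}_F(\,\cdot\,,\epsilon)\subseteq B_{g_0}(\,\cdot\,,\epsilon)$ from Remark~\ref{rprevio}(4) together with the triangle inequality for $d_{g_0}$ to place the $F$-balls inside $B_{g_0}(p,r)$. The only difference is presentational: the paper simply invokes an admissible curve through $x$ and picks $q$ before and $q'$ after $x$ along it, whereas you construct that curve explicitly in coordinates and quantify $\epsilon$ and the location of $q,q'$ more carefully.
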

\begin{proof}
Consider an admissible curve $\gamma$ passing through $x$.
Clearly, we can choose a point $q$ on $\gamma$  such that
$d_F(q,x)<\epsilon$ and $B_{g_0}(q,\epsilon)\subset B_{g_0}(p,r)$.
So (recall part (4) of Remark \ref{rprevio}),  $x\in
B^+_F(q,\epsilon)\subseteq B_{g_0}(q,\epsilon)\subseteq
B_{g_0}(p,r)$. By reversing the parametrization of $\gamma$,  the
required point $q'$ can be found analogously.
\end{proof}

\begin{prop}\label{topologia}
Let $F$ be a lower bounded conic pseudo-Finsler metric on $M$.
Then the collection of subsets $\{B^+_F(p,r): p\in M, r>0\}$
(resp. $\{B^-_F(p,r): p\in M, r>0\}$) constitute a topological
basis of $M$.
\end{prop}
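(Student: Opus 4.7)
The plan is to show that the collection $\mathcal{B}^+ := \{B^+_F(p,r): p\in M, r>0\}$ is a basis for the standard manifold topology on $M$; the backward case will be identical after reversing parametrizations. Concretely, I will verify the two standard basis conditions: (a) every element of $\mathcal{B}^+$ is open in the manifold topology, and (b) for every open set $U\subseteq M$ and every point $x\in U$, there is a member of $\mathcal{B}^+$ containing $x$ and contained in $U$.

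Condition (a) is already provided by Proposition \ref{openballs}, which established that forward $d_F$-balls are open subsets of $M$. For condition (b), my strategy is to sandwich a forward $d_F$-ball between $x$ and an auxiliary Riemannian ball that already lies inside $U$. The key input is the hypothesis that $F$ is Riemannianly lower bounded: fix any lower bound $g_0$, a Riemannian metric on $M$ whose associated distance $d_{g_0}$ generates precisely the manifold topology. Consequently, the family $\{B_{g_0}(p,r)\}$ is itself a topological basis of $M$, so for any open $U$ and any $x\in U$ I can choose $r>0$ with $B_{g_0}(x,r)\subseteq U$.

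At this point the work is essentially done by Lemma \ref{basis}: applied to the point $x\in B_{g_0}(x,r)$, it yields some $q\in B_{g_0}(x,r)$ and $\epsilon>0$ such that
\[
x\in B^+_F(q,\epsilon)\subseteq B_{g_0}(x,r)\subseteq U,
\]
which is exactly (b). To conclude that $\mathcal{B}^+$ covers $M$ (so that (B1) holds), I can simply apply the same argument to $U=M$ and any point $x\in M$.

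Since the heavy lifting is already contained in Proposition \ref{openballs} (openness of forward balls) and Lemma \ref{basis} (the refinement of $g_0$-balls by forward $d_F$-balls), I do not expect a genuine obstacle in this proof; the only conceptual point worth emphasizing is the role of the lower bound $g_0$, whose associated Riemannian distance supplies a benchmark basis for the manifold topology against which the forward $d_F$-balls can be compared. The backward case follows verbatim by invoking the $q'$-clause of Lemma \ref{basis}.
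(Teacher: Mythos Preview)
Your proof is correct and follows exactly the approach of the paper, which simply states that the result is a direct consequence of Proposition~\ref{openballs} and Lemma~\ref{basis}. You have merely spelled out the routine details of how these two ingredients combine to verify the basis axioms.
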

\begin{proof}
It is a direct consequence of Proposition \ref{openballs} and Lemma \ref{basis}.
\end{proof}

\subsection{Discussion on balls and distances}
 In
general, the
  Finslerian separation may behave in a very different way from a
  distance. To make more precise this statement, let us introduce the
  following notion, see \cite[Section 1]{Bus44}  and also \cite[page 5]{Z}  and \cite{FHS}.

  \begin{defi}\label{dgd}
  A {\em generalized distance} $d$ on a set $X$ is a map $d:X\times X\rightarrow \R$
  satisfying  the following
axioms:
\begin{itemize}
\item[(a1)] $d(x,y)\geq 0$ for all $x,y\in X$.
\item[(a2)] $d(x,y)=d(y,x)=0$ if and only if $x=y$.
\item[(a3)] $d(x,z)\leq
d(x,y)+d(y,z)$ for all $x,y,z\in X$.
\item[(a4)] Given a sequence
$\{x_n\}\subset X$ and $x\in X$, then $\lim_{n\rightarrow
\infty}d(x_n,x)=0$ if and only if $\lim_{n\rightarrow
\infty}d(x,x_n)=0$.
\end{itemize}
  \end{defi}
  For any generalized distance,  the {\em forward}
  balls have a natural meaning, and generate a topology which
  coincides with the one generated by the  {\em backward} balls.
Recall that the  Finslerian  distance associated to a Finsler
manifold is a generalized distance in the sense above.  This
can be extended to pseudo-Finsler manifolds, as they are
Riemannianly lower bounded.
\begin{thm}\label{tballs}
For any pseudo-Finsler manifold $(M,F)$, the Finslerian separation
$d_F$ is a generalized distance,  the forward (resp. backward)
open balls generate the topology of the manifold, and $d_F$ is
continuous with this topology.
\end{thm}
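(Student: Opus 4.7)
The key observation is that in the pseudo-Finsler case $A=TM$, and Proposition \ref{plema}(ii) tells us that each indicatrix $S_p$ is diffeomorphic to a sphere, hence compact. Combined with the continuity of $F:TM\to[0,\infty)$, this will yield local two-sided bounds by any auxiliary Riemannian norm, which is strictly stronger than the Riemannian lower boundedness hypothesis of Proposition \ref{topologia}.

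My plan is to first establish the following local estimate. Fix any auxiliary Riemannian metric $g_R$ on $M$ with associated norm $F_R=\sqrt{g_R}$. For each $p\in M$ choose a compact coordinate neighborhood $K\ni p$. The Riemannian unit sphere bundle over $K$, namely $\Sigma_K:=\{v\in TM:\pi(v)\in K,\ F_R(v)=1\}$, is compact, and $F$ is continuous and strictly positive on $\Sigma_K$ (by property $(i)$ of Minkowski pseudo-norms). Hence there exist constants $0<c\leq C$ with $c\leq F(v)\leq C$ for all $v\in\Sigma_K$, and positive homogeneity upgrades this to
\[
c\,F_R(v)\ \leq\ F(v)\ \leq\ C\,F_R(v),\qquad v\in\pi^{-1}(K).
\]
Using Remark \ref{rprevio}(1), the lower estimate shows that $F$ is Riemannianly lower bounded on $M$, so Proposition \ref{topologia} immediately supplies the topological-basis claim: the forward (resp. backward) $d_F$-balls form a basis of the manifold topology on $M$.

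Next I verify that $d_F$ is a generalized distance in the sense of Definition \ref{dgd}. Axiom (a1) is built into the definition and (a3) is the already-noted inequality \eqref{sep2}. For (a2): if $x=y$, the constant curve at $x$ has velocity $0\in A=TM$ with $F(0)=0$, hence $d_F(x,x)=0$; conversely, $d_F\geq c\,d_{g_R}$ near $x$ (from the lower bound above) forces $x=y$ whenever $d_F(x,y)=0$. The substantive point is (a4): using the \emph{upper} bound on $K$, any admissible segment of the Riemannian radial curve from $x$ to a nearby $x_n$ (which is $F$-admissible since $A=TM$) yields $d_F(x,x_n)\leq C\,d_{g_R}(x,x_n)$ and, reversing parametrization, $d_F(x_n,x)\leq C\,d_{g_R}(x_n,x)$; since $d_{g_R}$ is symmetric, $d_F(x,x_n)\to 0$ and $d_F(x_n,x)\to 0$ occur simultaneously (and both are implied by $x_n\to x$ in the manifold topology, via the same local upper bound).

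Finally, continuity of $d_F$ on $M\times M$ follows from the generalized triangle inequality together with (a4): for $(p_n,q_n)\to(p,q)$,
\[
|d_F(p_n,q_n)-d_F(p,q)|\ \leq\ d_F(p_n,p)+d_F(q,q_n)+d_F(p,p_n)+d_F(q_n,q),
\]
and each summand tends to zero by the local upper bound and (a4). I expect the only delicate step to be (a4): it is precisely here that the pseudo-Finsler hypothesis $A=TM$ (equivalently, compactness of each indicatrix) is essential, since it alone guarantees the local \emph{upper} Riemannian bound that makes the asymmetric $d_F$ behave symmetrically in the limit; none of this would be available in a genuinely conic setting (cf.\ Example \ref{ex4} and Proposition \ref{ppseudi}).
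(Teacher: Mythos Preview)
Your proof is correct and follows essentially the same strategy as the paper: sandwich $F$ between two Riemannian norms to obtain $d_1\le d_F\le d_2$ and deduce all four axioms and the topology statement from that. The paper does this slightly more directly by asserting global metrics $g_1,g_2$ (each conformal to a fixed $g_R$) with $g_1\le F^2\le g_2$ on all of $TM$, so that the distance sandwich is immediate and the basis property follows without invoking Proposition~\ref{topologia}; you instead establish the bounds locally and then globalize only the lower one via Remark~\ref{rprevio}(1), which is a harmless detour. One small wording issue: your line ``$d_F\ge c\,d_{g_R}$ near $x$'' does not follow from the local norm bound alone (curves could leave $K$), but you already have the global lower bound $d_F\ge d_{g_0}$, which is what you actually need for (a2) and (a4). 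For continuity the paper just cites \cite[p.~6]{Z}, while your explicit triangle-inequality estimate is a fine substitute.
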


\begin{proof} The consistency of the definition, including the axiom
(a1) in Definition \ref{dgd} follows
from part $(iii)$ of Proposition \ref{pimas}, and the axiom
(a3)  from Eq. \eqref{sep2}. For the other two axioms, choose
two auxiliary Riemannian metrics $g_1, g_2$ with distances, resp.,
$d_1, d_2$ which satisfy $g_1(v,v)\leq F^2(v)\leq g_2(v,v)$ for
all $v$ in $TM$ (easily, $g_1$ and $g_2$ can be chosen conformal
to any prescribed Riemannian metric $g_R$). Obviously:
$$d_1 \leq d_F \leq d_2 .$$ From this equality,  (a2) and (a4)
plus the assertion of the topology of the manifold, follow
directly. The last assertion is general for the topology
associated to any generalized distance \cite[p. 6]{Z}.
\end{proof}

\begin{rem}\label{pseudoaffineballs}
However,  if a vector space $V$  endowed with a pseudo-norm
$\parallel \cdot \parallel$ is regarded as a pseudo-Finsler space,
the affine and $d_F$-balls may differ. More precisely, the
unit (forward) affine ball is always contained in the
corresponding unit (forward) $d_F$-one, and the inclusion is
strict if $g^{p}$ is indefinite at some $p\in
V\backslash\{0\}$.
\end{rem}
Theorem \ref{tballs} shows that some properties of $d_F$ in the
Finsler case are retained in the pseudo-Finsler one. However,
in the general conic case the things may be very different, as
only Proposition \ref{openballs} can be claimed. As the following
example suggests, the relation $\prec$ may give a tidier
information.

\begin{exe}\label{ex317}
(1) In Example \ref{ex4}, regarded as a conic pseudo-Finsler
manifold $(M,F)$, the following properties occur:
$${\rm I}^+(0)=\{(x,y): |x|<y\}, \quad d_F(0,v)=\left\{
\begin{array} {cll}0 & \hbox{if} & v\in {\rm I}^+(0), \\ \infty & \hbox{if} & v\not\in
{\rm I}^+(0).\end{array} \right.$$
In particular, $d_F(0,0)=\infty$ and the open $d_F$-balls
look very different to the open affine balls.

(2) Moreover, one can obtain a quotient conic pseudo-Finsler
metric on the torus $T^2=\R^2 / \Z^2$  just taking into account
that $F$ is invariant under translations. In this quotient, {\em
the Finslerian separation between any two points is equal to 0}.

(3) Regarding Example \ref{ex3} as a conic Finsler metric (with
non pointwise convex $A$), one also has $d_F((0,0),(0,-s))=0$ for
all $s\geq 0$ (recall that such $(0,-s)$ does not belong to
$A_{(0,0)}$ but $(0,0)\prec (0,-s)$ as one can connect these two
points by means of $F$-admissible piecewise smooth curves with a
break). Thus, $\{(0,-s): s\geq 0\} \subset B_F^+((0,0),r)$ but
$\{(0,-s): s\geq 0\} \cap B^+_{(0,0)}(r)=\emptyset$ for all $r>0$.
\end{exe}
 Let us see that even in the conic Finsler case, the Finslerian
separation can be discontinuous  around two points
$p,q$ with  $d_F(p,q)<\infty$.
\begin{exe}\label{discontinuity}
  Consider  the translation $\varphi:\R^2\rightarrow \R^2$ given by
$\varphi(x,y)=(x+2,y)$ and   the group of isometries
$G=\{\varphi^n: n\in\Z\}$. The quotient  $\R^2/G$ can be
identified with the strip $\{(x,y)\in\R^2:-1\leq x\leq 1$ and
$(-1,y)\equiv (1,y)$ $\forall y\in \R\}$. Now consider the convex
open cone  of $T_{(0,0)}\R^2$ determined by the vectors $(-1,3)$
and $(1,1)$ and choose in every point of the cylinder the open
cone obtained as the parallel traslation of this cone. This
defines the subset $A$. The Finsler metric $F$ is the square root
of the Euclidean metric restricted to $A$. Finally observe that
the  separation from  $p=(0,0)$ is not continuous in $q=(-2/3,2)$.
In fact, $d_F((0,0),(-2/3,2))=2\sqrt{13}/3$ (which is equal to the
Euclidean distance from $(0,0)$ to $(4/3,2)$), while
$d_F((0,0),(-2/3+\epsilon,2))=\sqrt{(\epsilon-2/3)^2+4}$ (the
Euclidean distance between the two points) for $0<\epsilon<2$.
\end{exe}

\subsection{Minimization properties of geodesics in conic Finsler
metrics}\label{s2d} For any conic pseudo-Finsler metric we can
define the energy functional as
\begin{equation*}\label{energyfunctional}
E_F(\alpha)=\int_a^b F(\dot\alpha(t))^2\df s,
\end{equation*}
where $\alpha:[a,b]\rightarrow M$ is any $F$-admissible piecewise
 smooth curve between two fixed points $p,q\in M$. Then,
geodesics can be defined as critical points of this functional. Of
course, they will be also critical points of the length funcional
in \eqref{lengthfunctional}, but as critical points of the energy
functional, geodesics are obliged to have constant speed.

When the fundamental tensor $g$ is non-degenerate we can define
the  {\em Chern connection} (see \cite[Chapter 2]{BaChSh00}),
which is a connection for  $ \tilde{\pi}^*:
\pi^*(TM)\rightarrow A\setminus\{0\}$ (recall the diagram in
\eqref{efibrados}).
Moreover, fixing a vector field $W$ on an open $U\subset M$, the
Chern connection gives an affine connection on $U$ that we will
denote by $\nabla^W$. If we fix a vector field $T$ along a curve
$\gamma$, the Chern connection provides a covariant derivative
along $\gamma$ with reference $T$ that we will denote as $D^T$
(see \cite[Sections 7.2 and 7.3]{shen2001}). In this case,
geodesics are uniquely determined when the initial conditions are
given as the solutions of the equation $D^T_TT=0$, where
$T=\dot\gamma$, and there is a unique geodesic tangent to a given
vector of the tangent bundle. Moreover, we can define the {\em
exponential map} in $p\in M$, $\exp_p:\Omega\subseteq A_p
(\subseteq T_pM)\rightarrow M$, as $\exp_p(v)=\gamma(1)$,
where $\gamma$ is the unique geodesic such that $\gamma(0)=p$ and
$\dot\gamma(0)=v$, whenever $\gamma$ is defined at least in
$[0,1]$.

\begin{conv}
According to our conventions, if $0\not\in A_p$, the constant
curve $\gamma_p(t)=p$ for all $t\in \R$ is not an admissible curve
and, so, it is not a geodesic (this situation is common in our
study, as it happens in some points whenever a conic
pseudo-Finsler metric is not pseudo-Finsler). Nevertheless, when
considering any curve $c: [a,b]\rightarrow M$ starting at $p$, we
will not care about whether this initial point is obtained from
the exponential. So, we will say that $c$ is contained in the
image by $\exp_p$ of some subset $S\subseteq \Omega (\subseteq
A_p)$ as the geodesic balls below (or that  $c$ lies in $\exp_p
(S)$),  if $c((a,b])\subset \exp_p(S)$,
(that is, we will work as if $\exp_p(0)=p$ when forced by the
context) and we will assume that $\dot c(a)\in A_p$ (otherwise, the curve
would not be admissible).
\end{conv}

 When the fundamental tensor is degenerate, the interpretation
of geodesics as solutions of $D^T_TT=0$ makes no sense, but as
critical points of the energy functional still holds. However,
then geodesics are not univocally determined by its velocity at
one point (see \cite[Example 3 (Fig.1)]{Mat}; as a limit case,
all the constant-speed parameterized curves in the second case of
Example \ref{ex2a} would be geodesics).
 Furthermore, if the fundamental tensor is not
degenerate but indefinite at some $v\in A_p$, the geodesic with
velocity $v$ is  never minimizing. Next, we will see that
for conic Finsler metrics, geodesics minimize in geodesics balls,
but a previous technical discussion is required.

\begin{rem}\label{gausslemma}
The Gauss Lemma in typical references such as \cite[Lemma
6.1.1]{BaChSh00} is proven only for standard Finsler metrics.
Nevertheless, its local nature allows one  to prove it in a much
more general context: it works for any conic pseudo-Finsler metric
whenever the exponential is defined in a  neighborhood
${\mathcal W}$ of $v\in A_p\subset T_pM$  and $g$ is
non-degenerate in ${\mathcal W}$. In fact, put $r=F(v)(>0)$,
let $\gamma$ be a geodesic with $\dot \gamma (0)=v$, and $w$, a
tangent
vector to the 
sphere
$S_0^+(r)=\{u\in A_p: F(u)=r\}$. Choose a curve
$\rho:(-\epsilon,\epsilon)\rightarrow S^+_0(r) \cap \mathcal{W}
\subset A_p$ such
that $\dot \rho(0)=w$, and consider the 
variation
$(-\epsilon,\epsilon)\times [0,r]\rightarrow A_p$, $(s,t)\mapsto
t\rho(s)$. Accordingly, the variation of $\gamma (=\gamma_0)$ by
the geodesics $\gamma_s(t)=\exp_p(t\rho(s))$ with variational
field $U$ satisfies
\[\frac{\partial}{\partial s} \ell_F(\gamma_s )|_{s=0}=\frac{1}{F(T)}g_T(U,T)|_0^{r}-\int_0^r g_T(U,D^T_T(T/F(T)))\df t,\]
where $T=\dot\gamma$  (see \cite[Exercise 5.2.4]{BaChSh00}). As
 $\gamma$ is a geodesic, and all the curves in the variation have the same length and depart
from the same point, the last equation
 reduces to  the Gauss Lemma, i.e.:
 $ g_T(d\exp_{p}[w],T)=0$.
\end{rem}

\begin{prop}\label{minimizeconic}
Let $(M,F)$ be a conic Finsler metric with pointwise convex
$A$, and assume that $\exp_p$
is defined  on a certain 
ball $B^+_0(r)$ of
$(T_pM,F_p)$ and it is a diffeomorphism onto  the geodesic ball
$B_p^+(r):=\exp_p(B^+_0(r))$.  Then, for any $q\in B_p^+(r)$ the
radial geodesic from $p$ to $q$ is, up to reparametrizations, the
unique minimizer of the Finslerian separation among the admissible curves
contained in $B_p^+(r)$.
\end{prop}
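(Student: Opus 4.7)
The plan is to mimic the classical Gauss-lemma proof of local minimization, adapted to the conic Finsler setting by invoking the generalized Gauss Lemma of Remark~\ref{gausslemma} and the fundamental inequality of Remark~\ref{fundineqRem}. Let $\alpha:[a,b]\to B_p^+(r)$ be any $F$-admissible piecewise smooth curve from $p$ to $q$. Since $\exp_p$ is a diffeomorphism from $B_0^+(r)\subset A_p$ onto $B_p^+(r)$, I lift $\alpha$ to a piecewise smooth curve $\sigma:(a,b]\to B_0^+(r)\setminus\{0\}$ with $\sigma(t)\to 0$ as $t\to a^+$, and I set $s(t):=F_p(\sigma(t))$ and $u(t):=\sigma(t)/s(t)$, which lies in the unit indicatrix of $F_p$ in $A_p$. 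Then $\dot\sigma(t)=\dot s(t)\,u(t)+s(t)\,\dot u(t)$, where $s(t)\dot u(t)$ is tangent at $\sigma(t)$ to the $F_p$-sphere $S_0^+(s(t))$.

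Next I introduce the unit radial field $T(t):=d\exp_p|_{\sigma(t)}[u(t)]$, the velocity at $\alpha(t)$ of the radial geodesic $\tau\mapsto\exp_p(\tau u(t))$, so $F(T(t))=1$ by constant-speed parametrization of geodesics. Differentiating $\alpha=\exp_p\circ\sigma$ yields
\[\dot\alpha(t)=\dot s(t)\,T(t)+d\exp_p|_{\sigma(t)}\bigl[s(t)\,\dot u(t)\bigr].\]
Applied to the tangent-to-sphere vector $s(t)\dot u(t)$, the Gauss Lemma in Remark~\ref{gausslemma} (valid because $g$ is positive definite on $A$) makes the second summand $g_T$-orthogonal to $T$, giving
\[g_{T(t)}\bigl(T(t),\dot\alpha(t)\bigr)=\dot s(t)\,g_{T(t)}(T(t),T(t))=\dot s(t).\]

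Then I invoke the fundamental inequality at each $\alpha(t)$: by pointwise convexity of $A$ the segment from $T(t)$ to $\dot\alpha(t)$ inside $A_{\alpha(t)}$ lies in $A$, so Remark~\ref{fundineqRem} yields $\dot s(t)\le F(T(t))F(\dot\alpha(t))=F(\dot\alpha(t))$, with equality iff $\dot\alpha(t)=\lambda(t)T(t)$ for some $\lambda(t)\ge 0$. Integrating on $[a+\varepsilon,b]$, summing over smooth pieces, and letting $\varepsilon\to 0^+$ (using $s(a+\varepsilon)\to 0$ since $\exp_p^{-1}(\alpha(a+\varepsilon))\to 0$),
\[\ell_F(\alpha)\ge s(b)=F_p\bigl(\exp_p^{-1}(q)\bigr),\]
which is precisely the length of the radial geodesic from $p$ to $q$. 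If equality holds, the equality clause of the fundamental inequality forces $\dot\alpha\parallel T$ throughout, hence $\dot\sigma\parallel\sigma$, so $\sigma$ sweeps a single radial ray and $\alpha$ is a monotone reparametrization of the radial geodesic.

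The main subtleties I expect are: (a) justifying the Gauss Lemma in the generalized form of Remark~\ref{gausslemma}, which hinges on non-degeneracy of $g$ throughout $A$; (b) applying the fundamental inequality in the conic setting, where pointwise convexity of $A$ is essential to avoid the pathologies illustrated in Example~\ref{ex2185}; and (c) the behavior at the singular initial point $t=a$, handled by the $\varepsilon$-limit above together with the continuity of $\exp_p^{-1}$ down to $0$ on the closure of $B_0^+(r)$. The piecewise case splits cleanly at the breakpoints by additivity of length and of the monotone integral of $\dot s$.
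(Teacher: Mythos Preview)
Your argument is correct and follows essentially the same route as the paper's proof: a polar decomposition $\sigma(t)=s(t)u(t)$ of the lifted curve in $T_pM$, Gauss Lemma (Remark~\ref{gausslemma}) for the orthogonality $g_T(T,d\exp_p[s\dot u])=0$, the fundamental inequality (Remark~\ref{fundineqRem}, using pointwise convexity of $A$) to obtain $\dot s\leq F(\dot\alpha)$, integration with the $\varepsilon$-limit at the initial point, and the equality clause for uniqueness. Your treatment of the piecewise case and the singular endpoint is in fact slightly more explicit than the paper's, but the structure and the key ingredients are the same.
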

\begin{proof}
Given a point $q\in B_p^+(r)$, let $\tilde{r}$ be the length of
the radial geodesic from $p$ to $q$. Consider any smooth
admissible curve $c:[0,1]\rightarrow M$ from $p$ to $q$ contained
in $B_p^+(r)$. As $\exp_p$ is a diffemorphism on $B^+_0(r)$, there
exist   functions $s:[0,1]\rightarrow [0,r)$ and
$v:[0,1]\rightarrow S_0^+(1)\subset T_pM$ 
uniquely determined   (both smooth up to zero and $s$
continuous at zero) such that $c(u)=\exp_p(s(u)v(u))$ so that
$s(0)=0$ and $s(1)=\tilde{r}$. Then, define $\sigma: [0,r) \times [0,1]\rightarrow M$ as
$\sigma(t,u)=\exp_p(t v(u))$ and denote $T=\frac{\partial
\sigma}{\partial t}$ and $U=\frac{\partial \sigma}{\partial
u}=d\exp_p[t\dot v]$. We can express $c(u)=\sigma(s(u),u)$ for
every $u\in [0,1]$. Hence by the chain rule
\begin{equation}\label{cprime}\dot c(u)=\dot s(u)\, T+U.
\end{equation}
Moreover, by the Fundamental Inequality in \eqref{fundineq},
$g_T(T,\dot c)\leq F(T)F(\dot c)$, and using the previous identity
we get
\[\dot s\, g_T(T,T)+g_T(T,U)\leq F(T) F(\dot c), \quad \quad \forall s\in (0,1].\]
By Gauss Lemma (see  Remark \ref{gausslemma} above), $g_T(T,U)=0$,
and $F(T)=1$ because $T$ is the velocity of a geodesic with
initial velocity equal to $v(u)$ (recall that $F(v(u))=1$).
Moreover, $g_T(T,T)=F(T)^2=1$ and, therefore,  $\dot s\leq F(\dot
c)$. Applying the last inequality, we deduce that
\[\ell_F(c)=\int_0^1 F(\dot c)du\geq  \lim_{\epsilon \searrow 0} \int_\epsilon^1  \dot s(u)du= s(1)-s(0)=\tilde{r}.\]
As the length of the radial geodesic is exactly $\tilde{r}$, this
shows that it is a global minimizer between all the curves lying
in $B^+_p(r)$.

For the unicity up to reparametrization,  observe that the
equality in the Fundamental Inequality can happen just when
$T$ and $\dot c$ are proportional, but  looking at Eq.
\eqref{cprime}, this means that $U=0$ (recall that, by Gauss
Lemma, $U$ is $g_T$-orthogonal to $T$). As $\exp_p$ is a
diffeomorphism, $U=0$ implies that $\dot v=0$, and therefore, that
$c$ is a reparametrization of a radial geodesic.

\end{proof}

\begin{rem}\label{rm0}
 Regarding the absolute minimization:
\begin{enumerate}[(1)]
\item
In the standard Finsler case, the radial geodesic is an absolute
minimizer for the curves from $p$ to $q$ on all $M$. In fact, if
we consider a curve $\beta$ that goes out of $B^+_p(r)$, then its
length must also be greater or equal than $ r$, as the portion of
the curve in $B^+_p(r)$ from $p$ to the boundary of $B^+_p(r)$ has
already length greater or equal to $r$. Summing up, in a Finsler
metric, each radial geodesic segment of a geodesic ball as in
Proposition \ref{minimizeconic} is the unique curve in $M$ of
minimum  length  which connects its endpoints, up to affine
reparameterizations.

\item
This absolute minimization cannot be ensured in the conic Finsler
case. The reason is that shorter curves which cross the boundary
of $\exp_p(A_p)$ may appear. In fact, consider the following
example on $\R^2$. Let $p=(0,0), q=(0,1) \in \R^2$ and
$R_\epsilon$ the open rectangle of vertexes
$V_p^\pm(\epsilon)=p+(\pm \epsilon,-\epsilon)$,
$V_q^\pm(\epsilon)=q+(\pm \epsilon,\epsilon)$ and choose small
$\epsilon_1>\epsilon_0>0$. Consider a Riemannian metric $g=\Lambda
\langle\cdot,\cdot\rangle$  (conformal to the usual one
$\langle\cdot,\cdot\rangle$)  such that the conformal factor
$\Lambda (>0)$ is equal to some big constant $B$ on
$R_{\epsilon_0}$ and to 1 outside $R_{\epsilon_1}$. $B$ is chosen
so that there are curves $y\mapsto (x(y),y)$ from $p$ to $q$ which
go outside $R_{\epsilon_1}$ and have $g$-length much smaller than
$B$ (which is the $g$-length of the segment from $p$ to $q$). Now,
let $F$ be the conic Finsler metric associated to $g$ with domain
$A$ determined as follows. $A_{(x,y)}\equiv (x,y)+C_y$ where $C_y$
is: (a) the open cone delimited by $p$ and the vertexes
$V_q^\pm(\epsilon_o)$ for $(x,y)$ with $y\leq 0$, (b) the half
plane $y>0$ for $(x,y)$ with $y\geq \delta$ where $\delta\geq 0$
is  some prescribed small constant, and (c) a cone obtained by
opening the one in (a) until the half plane in (b) for
$0<y<\delta$. (Notice that the choice $\delta=0$ is permitted, and
then the case (c) will not occur, however, the possibility to
choose $\delta
>0$ stresses that a  choice of $C_y$ discontinuous on $y$, is
irrelevant here). Then, clearly $B_p^+(1+\epsilon_0)\subset
R_{\epsilon_0}$ but there are admissible curves from $p$ to $q$
shorter than the geodesic segment in the ball.

\item
As a consequence of (2),  some additional hypothesis must be
imposed in order to ensure the character of absolute minimizer for
radial geodesics. Taking into account (1), a sufficient hypothesis
would be: {\em the boundary of $B_p^+(r)$ in ${\rm I}^+(p)$ is equal to
$\exp_p(S_0^+(r))$.}
\end{enumerate}
\end{rem}

\begin{rem}\label{rm1}
 Regarding the domain:
\begin{enumerate}[(1)]
\item
 In Proposition \ref{minimizeconic} we have  assumed that the
domain A is pointwise convex. Even though this could be weakened
(only the fundamental inequality was required, and this holds
under more general hypotheses, see Remark \ref{fundineqRem}), if A
is not pointwise convex, some other pathologies may  happen. For
example,
for the conic Finsler norm of Example \ref{ex3},
   the separation from $(0,0)$ to any
point of the form $(0,-s)$, $s\geq 0$ is $0$, but there is no
geodesic joining them (recall part (3) of Example \ref{ex317}).
The underlying reason is that these points $(0,-s)$ lie in ${\rm
I}^+(0,0)$ but not in the affine ball $B_0^+(r)$ for any $r\geq
0$.

\item
In contrast with the behavior of Example \ref{ex3},
Proposition \ref{minimizeconic} plus part (3) of Remark \ref{rm1} imply:
{\em in a vector space $V$ endowed with a conic Minkowski norm
defined on a convex domain $A$, the $d_F$-balls of the Finslerian
separation coincide with the affine balls.} Indeed, the convexity
of each $A_p=A\cap T_pV$ implies now that $\exp_p(A_p)$ coincides
with ${\rm I}^+(p)$.

\item
As a further improvement, to
find general conditions on a conic Finsler metric $F$ so that
Proposition \ref{minimizeconic} can be applied for small enough
affine balls would  be very interesting. Proposition
\ref{topologia} suggests that the lower boundedness of $F$ might
be such a property. Recall that when the Finsler metric is not
lower bounded, the forward  balls may not constitute a topological
basis (see Example \ref{ex3} and the first one in \ref{ex2a}), and
it is not difficult to find cases where the exponential is not
defined even in small balls: consider the first Example \ref{ex2a}
(notice that $A$ is pointwise convex in this case), remove from
$\R^2$ the semiaxis $\{(x,0): x\leq 0\}$ and consider any ball
$B^+_{(x_0,y_0)}(r), r>0$ for $(x_0,y_0)$ with $y_0<0$.
\end{enumerate}

\end{rem}

 Other properties of local minimization and conjugate points of
geodesics in the conic Finsler case deserve to be studied,
although we will not go through them.

\subsection{Summary on geodesics and distance}
Let us summarize and compare in this subsection the properties of
geodesics and distance of the different types of Finsler metric
generalizations.

Recall first the following three well-known properties of a
Finsler manifold:
\begin{enumerate}[(i)]
\item
$d_F$ is a generalized distance, $d_F$ is continuous, and the open
forward  (resp. backward)  balls generate the topology of $M$.

\item
The equation of the geodesics is well defined and characterize
them univocally from initial data (the velocity at a point).
Moreover, the radial geodesic segments of a geodesic ball are
strict global minimizers  of the energy functional (see part (1)
of Remark \ref{rm0}).

\item
In particular, for a Minkowski norm the affine balls
and the $d_F$-balls agree,  and the geodesics as a Finsler
manifold coincide with affinely parametrized straight lines for a
Minkowski norm.

\end{enumerate}
Let us analyze how these three properties behave in the generalizations
of Finsler metrics we are studying.

 For a pseudo-Finsler non-Finsler manifold:
\begin{enumerate}[(1)]
\item Properties in (i) hold (see Theorem \ref{tballs}).

\item Properties in (ii) do not hold. In fact, the equation of the
geodesics is necessarily ill defined (as $g^{(p)}$ must be
degenerate at some directions, see part (5) of  Remark \ref{r1}).
Moreover, even in the directions where this equation is well
defined, geodesics do not minimize in any sense in general
(recall, for example, Remark \ref{pseudoaffineballs}). The
definition of geodesics as critical curves of the energy
functional makes sense, but their uniqueness from the initial
velocity does not hold in general (second Example \ref{ex2a}).

\item The first assertion in Property (iii) does not hold for
Minkowski pseudo-norms whenever $g$ is indefinite (see Remark
\ref{pseudoaffineballs}).  A straightforward computation shows
that the affine  lines are geodesics for any pseudo-norm, but
additional critical curves of the energy functional may appear
---in the case of norms, the uniqueness of the affine lines as
geodesics hold if the strict triangle inequality
hold.

\end{enumerate}

 For a conic Finsler manifold:
\begin{enumerate}[(1)]
\item The two first assertions in Property (i) do not hold in
general, as $d_F$ may reach the value $\infty$ and also can
be discontinuous in finite values (see Example \ref{discontinuity}).
The open forward balls do not
constitute a basis for the topology in general (first Example
\ref{ex2a} and \ref{ex3}) even though they do in particular cases
(Proposition \ref{topologia}).

\item The first assertion in Property  (ii)  holds, in the sense
that the equation of the geodesics is always well defined along
the admisible directions. The radial geodesic segments minimize
inside geodesic balls (see Proposition \ref{minimizeconic} and
also Remark \ref{rm0}, especially its part (2)).

\item  Properties in (iii) hold essentially, but  some subtleties
must be taken into account. For a conic Minkowski norm {\em with
convex $A$} the affine and $d_F$ balls coincide (see  part (2),
and also (1),  in Remark \ref{rm1}). The affinely parametrized
curves with velocity in $A$ coincide with the geodesics as a conic
Finsler manifold.

\end{enumerate}
 For a conic pseudo-Finsler manifold in general the previous
 properties do not hold, but Propositions \ref{openballs} and \ref{topologia} and relation $\prec$  may
yield some general information, which may be useful combined with
particular properties of different classes of examples.

\section{Constructing conic Finsler metrics}\label{ls3}

New, let us consider  the case when a new conic pseudo-Finsler
metric is constructed from a homogeneous combination of
pre-existing conic Finsler metrics and one-forms, as a
generalization (in several senses) of the known $(\alpha,\beta)$-metrics.

\subsection{General result and first consequences}\label{s3a}
 In the following,
$F_1,\ldots,F_n$ will denote conic Finsler metrics on the manifold
$M$  of dimension $N$, with fundamental tensors $g^1,\dots, g^n$ (so that $g^k_v$ is
the fundamental tensor of $F_k$ at the tangent vector $v$). Moreover,
the so-called {\em angular metrics} (see \cite[Eq. 3.10.1]{BaChSh00}) are defined
as
\begin{equation}\label{angularmetric}
h_v^k(w,w)=g_v^k(w,w)-\frac{1}{F_k(v)^2}g_v^k(v,w)^2,
\end{equation}
for any $v\in A\setminus 0$ and $w\in T_{\pi(v)}M$ and
$k=1,\ldots,n$. Due to Cauchy-Schwarz inequality, the angular
metric of a conic Finsler metric is always positive semi-definite
being the direction of $v$ the only degenerate direction.

The
intersection $$A=\cap_{k=1}^n A_k \subset TM$$ of their conic
domains $A_k$ is assumed to be non-empty at each point, i.e.,
$\pi(A)=M$. Moreover, $\beta_{n+1},\beta_{n+2},\ldots,\beta_{n+m}$
will denote $m$ one-forms on $M$. The indexes $k,l$ will run from
$1$ to $n$. The indexes which label the ordering of one-forms will
be denoted with Greek letters $\mu, \nu$ and run from $n+1$ to
$n+m$, while the indexes $r, s$ will run from $1$ to $n+m$.

Let $B$ be a conic open subset of $\R^{n+m}$ and consider a
continuous  function $L: B\times M\rightarrow \R$, which
satisfies:
\begin{itemize}
\item[(a)] $L$ is smooth and positive away from $0$, i.e, on $(B\times
M) \setminus (\{0\}\times M)$.

\item[(b)] $L$ is $B$-positively homogeneous of degree 2, i.e.,
$L(\lambda x, p)=\lambda^2 L(x,p)$ for all $\lambda>0$ and all
$(x,p)\in B\times M$.
\end{itemize}
Denote by ${\rm Hess}(L)$ the $B$-Hessian function matrix
associated to $L$, that is, the matrix  with coefficients the
functions $a_{rs}=L_{,rs}$, 
where the
comma denotes derivative with respect to the corresponding
coordinates of $\R^{n+m}$ (in particular, $L_{,rs}$ means the
second partial derivative of $L$ with respect to the $r$-th and
$s$-th variables). We say that ${\mathrm Hess}(L)$ is positive
semidefinite if so is the  matrix of functions at each $(x,p)\in
B\times M$.
Finally, consider the function $F^2: A\subseteq TM\rightarrow \R$
defined as: \be \label{ef2}
F^2(v)=L(F_1(v),F_2(v),\ldots,F_n(v),\beta_{n+1}(v),\beta_{n+2}(v),\ldots,\beta_{n+m}(v),\pi(v)).\ee
\begin{thm}\label{central}
For any $L$ satisfying (a) and (b) as above, and $F^2$ as in
\eqref{ef2}, the function $F:=\sqrt{F^2}$ 
is a conic pseudo-Finsler metric with domain $A$ and fundamental
tensor:
\begin{equation}\label{fundamentalTensor}
 g_v(w,w)=\frac 12\sum_k\frac{L_{,k}}{F_k(v)}h_v^k(w,w)+\frac 12
 P(v,w){\mathrm Hess}(L) P(v,w)^T,
\end{equation}
for all $v\in A\setminus{0}$,  $w\in T_{\pi(v)}M$, where the
superscript $^T$ denotes transpose and $P(v,w)$ the $(n+m)$-tuple
\be \label{epvw}
P(v,w)=(\frac{g_v^1(v,w)}{F_1(v)}, 
\ldots,\frac{g_v^n(v,w)}{F_n(v)},\beta_{n+1}(w),\ldots,\beta_{n+m}(w))
\quad  (\in \R^{n+m}),\ee  and the derivatives  $L_{,r},
L_{,rs},...$  of $L$ are computed on \be
\label{epbm}(F_1(v),F_2(v),\ldots,F_n(v),\beta_{n+1}(v),\beta_{n+2}(v),\ldots,\beta_{n+m}(v),\pi(v))
\; \in B\times M. \ee Moreover,  $g$ is positive semi-definite if
the following conditions hold on the points in \eqref{epbm}
obtained by taking any $v\in A\setminus{0}$:
\begin{itemize}
\item[(A)] $L_{,k}\geq 0$ for every $k=1,\ldots,n$, and \item[(B)]
${\mathrm Hess}(L)$ is positive semi-definite,
\end{itemize}
and it is positive definite (i.e., $F$ is a conic Finsler metric)
if, additionally:
\begin{itemize}
\item[(C)] $L_{,1}+\ldots+L_{,n}>0$.
\end{itemize}
\end{thm}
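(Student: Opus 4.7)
The plan is to verify directly that $F:=\sqrt{F^2}$ satisfies the defining properties of a conic pseudo-Finsler metric, and then to compute its fundamental tensor via formula \eqref{eg}. Positive homogeneity of degree $1$ for $F$ is immediate from hypothesis (b), since each $F_k$ and $\beta_\mu$ is positively homogeneous of degree $1$, making the first $n+m$ arguments of $L$ scale by $\lambda$ and hence $F^2$ by $\lambda^2$. Positivity on $A\setminus\{0\}$ follows from (a), because for $v\in A\setminus\{0\}$ one has $F_k(v)>0$ for every $k$ (each $F_k$ is a Minkowski conic norm on $A_k\supseteq A$), so the point \eqref{epbm} lies in $(B\times M)\setminus(\{0\}\times M)$. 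Smoothness away from the zero section is inherited from $L$, the $F_k$ and the $\beta_\mu$.

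For the fundamental tensor, I would compute
\[
g_v(w,w) \;=\; \left.\frac{\partial^2}{\partial s^2}\right|_{s=0}\! G(v+sw), \qquad G := \tfrac{1}{2}F^2,
\]
by the chain rule, and polarize afterwards. Proposition \ref{fundamentalprop} applied to each $F_k$ gives $\partial_s F_k(v+sw)|_{s=0} = g_v^k(v,w)/F_k(v)$, which combined with $\partial_s\beta_\mu(v+sw)|_{s=0}=\beta_\mu(w)$ assembles exactly the tuple $P(v,w)$ in \eqref{epvw}. The second derivative then splits into two pieces: one arising from the second partials of $L$, which produces $\tfrac12 P(v,w)\,\mathrm{Hess}(L)\,P(v,w)^T$, and a second piece coming from $\sum_k L_{,k}\,\partial_s^2 F_k(v+sw)|_{s=0}$. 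Differentiating the identity $F_k^2 = 2 G_k$ twice and using $\partial_s^2 G_k(v+sw)|_{s=0} = g_v^k(w,w)$ yields
\[
\partial_s^2 F_k(v+sw)|_{s=0} \;=\; \frac{1}{F_k(v)}\Bigl(g_v^k(w,w) - \tfrac{g_v^k(v,w)^2}{F_k(v)^2}\Bigr) \;=\; \frac{h_v^k(w,w)}{F_k(v)},
\]
precisely the angular metric \eqref{angularmetric} divided by $F_k(v)$. Assembling both contributions yields \eqref{fundamentalTensor}.

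Positive semi-definiteness under (A) and (B) is then immediate: each term $L_{,k}\,h_v^k(w,w)/F_k(v)$ in the first sum is nonnegative (the angular metric is positive semi-definite by Cauchy--Schwarz) and the quadratic form $P(v,w)\,\mathrm{Hess}(L)\,P(v,w)^T$ is nonnegative by (B). For positive definiteness under the additional hypothesis (C), suppose $g_v(w,w)=0$; both sums must vanish separately. The vanishing of the first sum together with (C) forces $h_v^k(w,w)=0$ for some $k$ with $L_{,k}>0$, and since the unique degenerate direction of $h_v^k$ is the one spanned by $v$, this forces $w=\lambda v$ for some $\lambda\in\R$. Substituting into $P$ and using $g_v^k(v,v)=F_k(v)^2$ gives $P(v,\lambda v) = \lambda\,(F_1(v),\dots,F_n(v),\beta_{n+1}(v),\dots,\beta_{n+m}(v))$, i.e.\ $\lambda$ times the point at which $\mathrm{Hess}(L)$ is evaluated. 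Euler's identity for the degree-$2$ homogeneous function $L$ then gives
\[
P(v,\lambda v)\,\mathrm{Hess}(L)\,P(v,\lambda v)^T \;=\; 2\lambda^2\,L \;=\; 2\lambda^2\, F^2(v),
\]
so the vanishing of the second term forces $\lambda=0$ and hence $w=0$.

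The only subtle point is the positive definite case: the vanishing of the angular-metric sum alone leaves open the possibility $w=\lambda v$, and one must exploit Euler's identity in the \emph{second} term to complete the argument. Everything else is a careful bookkeeping exercise with the chain rule and the explicit formula for $\partial_s^2 F_k$ in terms of the angular metric.
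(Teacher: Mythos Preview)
Your proof is correct and follows essentially the same approach as the paper: a chain-rule computation of the second derivative of $\tfrac12 F^2$ identifies the angular-metric and Hessian contributions, and conditions (A), (B), (C) are read off from the resulting decomposition. The only minor difference is in the positive-definite step: for $w=\lambda v$, the paper appeals directly to $g_v(v,v)=F^2(v)$ from Proposition~\ref{fundamentalprop}, whereas you recover the same value by applying Euler's identity to the Hessian term---an equivalent but slightly longer route.
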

\begin{proof}
Recall from Proposition \ref{fundamentalprop} that $g_v(v,w)=\frac
12\frac{\partial}{\partial s} F(v+sw)^2|_{s=0}$. As, clearly,
\[\left. \frac{\partial F_k(v+sw)}{\partial s}\right|_{s=0}=\left.\frac{\partial \sqrt{F_k(v+sw)^2}}{\partial s}\right|_{s=0}=\frac{1}{F_k(v)} g_v^k(v,w)\]
and $\left.\frac{\partial \beta_\mu(v+sw)}{\partial
s}\right|_{s=0}=\beta_\mu(w)$, we have
\begin{equation}\label{laanterior}
2 g_v(v,w)=\left.\frac{\partial F^2(v+sw)}{\partial s}\right|_{s=0}=\sum_k \frac{1}{F_k(v)} g^k_v(v,w) L_{,k}+\sum_\mu \beta_\mu(w)L_{,\mu}.
\end{equation}
Again 
from Proposition
\ref{fundamentalprop},
\[g_v(w,w)=\frac12 \left. \frac{\partial}{\partial t}\right|_{t=0} \left(  \frac{\partial }{\partial s}\left. F^2((v+tw)+sw)\right|_{s=0}
\right)=  \frac{\partial }{\partial t} \left.
g_{v+tw}(v+tw,w)\right|_{t=0}
\] and, then,  applying \eqref{laanterior}, we obtain
\begin{multline}
 2 g_v(w,w) 
 =\sum_k\frac{L_{,k}}{F_k(v)}\left(g_v^k(w,w)-\frac{1}{F_k(v)^2}g^k_v(v,w)^2\right) \\ + \sum_{k,l}\frac{L_{,kl}}{ F_k(v)F_l(v)}g_v^k(v,w)g_v^l(v,w)
+2\sum_{k,\mu}\frac{L_{,k\mu}}{F_k(v)}g_v^k(v,w)\beta_\mu(w)\\+\sum_{\mu,\nu}L_{,\mu\nu}
\beta_\mu(w)\beta_\nu(w).\label{fundamentalTensor2}
\end{multline}
So, the expression of $g_v$ in \eqref{fundamentalTensor} follows
 by using
 $F_k(v)^2=g^k_v(v,v)$, formula \eqref{angularmetric}, and that  the two last lines of
the formula above can be written as $P(v,w){\mathrm Hess}(L)
P(v,w)^T$ for $P(v,w)$ as in \eqref{epvw}.

For its positive semi-definiteness, recall that applying
hypothesis (A), plus  the fact that the angular metrics $h_v^k$ in
\eqref{angularmetric} are positive semi-definite:
$$Q_1(v,w):=
\sum_k  \frac{L_{,k}}{F_k(v)}h^k_v(w,w)
 \geq 0. $$ 
 So, the result follows by applying (B) to obtain:
$$2g_v(w,w)-Q_1(v,w)= P(v,w) {\mathrm Hess}(L)  P(v,w)^T\geq 0.$$

Finally, for the  strict positiveness of $g$, if $w=\lambda v$,
with $\lambda\not=0$, then $g_v(w,w)=\lambda^2
F^2(v)>0$ and, otherwise, 
 $Q_1(v,w)>0$ by hypothesis (C) and because $h^k_v$ is degenerate
 only in $v$.
\end{proof}
\begin{rem}
 Observe that the expression of the fundamental tensor in
\eqref{fundtensorsum} still holds when $F_1,\ldots,F_n$ are just
conic pseudo-Finsler metrics. Indeed, we can obtain more accurate
conditions to construct Finsler metrics. For example, we can
consider conic pseudo-Finsler metrics with positive semi-definite
fundamental tensor satisfying $(A)$ and $(B)$ and to check that
for every $w\in T_{\pi(v)}M, v\in A\setminus 0$, there is some
term strictly positive.
\end{rem}
Next, let us consider two particular cases. The first one is very
elementary, and we include it because, as far as we know, it does
not appear in the classic books on the subject.

\begin{cor}\label{sum}
 Let $F_1,F_2,\ldots,F_n$ be conic Finsler metrics on $M$ defined on the same conic domain $A$.
 Then $F=F_1+F_2+\cdots+F_n$ is also a conic Finsler metric and its fundamental tensor
 is given as
\begin{equation}\label{fundtensorsum}
g_v(w,w)=F(v)\sum_{k}
\frac{h_v^k(w,w)}{F_k(v)}
 +\left(\sum_k\frac{g^k_v(v,w)}{F_k(v)}\right)^2,
\end{equation}
for any $v\in A\setminus{0}$ and $w\in T_{\pi(v)}M$.
\end{cor}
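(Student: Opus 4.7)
The plan is to apply Theorem \ref{central} to the homogeneous combination determined by $m=0$ (no one-forms) and the function
\[
L(x_1,\ldots,x_n) = (x_1+\cdots+x_n)^2,
\]
defined on the positive orthant $B = \{x\in\R^n : x_k>0 \text{ for all } k\}$, which is a conic open subset of $\R^n$. With this choice, the function $F^2$ in \eqref{ef2} becomes exactly $(F_1+\cdots+F_n)^2$, so that $F=F_1+\cdots+F_n$ is the square root asserted in the statement, and its domain is the common conic domain $A$.

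First I would verify the hypotheses of Theorem \ref{central}: smoothness of $L$ on $B$ and its positive $2$-homogeneity are immediate from the explicit formula. The key algebraic observation is that
\[
L_{,k} = 2(x_1+\cdots+x_n), \qquad L_{,kl}=2 \text{ for all } k,l,
\]
so that, evaluated at $(F_1(v),\ldots,F_n(v))$ for $v\in A\setminus\{0\}$, one obtains $L_{,k}=2F(v)>0$ and $\mathrm{Hess}(L)= 2\,\mathbf{1}\,\mathbf{1}^{T}$, where $\mathbf{1}\in\R^{n}$ is the all-ones column vector. This is a rank-one positive semidefinite matrix. Hence condition (A) holds (strictly, in fact), (B) holds, and (C) holds because $L_{,1}+\cdots+L_{,n}=2nF(v)>0$. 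Theorem \ref{central} then yields that $F$ is a conic Finsler metric on $A$.

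For the explicit formula of the fundamental tensor, I would substitute into \eqref{fundamentalTensor}. The first sum becomes
\[
\frac{1}{2}\sum_{k}\frac{2F(v)}{F_k(v)}\,h^k_v(w,w) = F(v)\sum_k\frac{h^k_v(w,w)}{F_k(v)},
\]
while, since $\mathrm{Hess}(L)=2\,\mathbf{1}\,\mathbf{1}^{T}$, the quadratic form reduces to
\[
P(v,w)\,\mathrm{Hess}(L)\,P(v,w)^{T} = 2\Big(\sum_{k}\frac{g^k_v(v,w)}{F_k(v)}\Big)^{2},
\]
and multiplying by $1/2$ gives the second summand of \eqref{fundtensorsum}. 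There is no serious obstacle here: the whole content of the corollary lies in recognising that the sum of conic Finsler metrics fits the framework of Theorem \ref{central} for the particular choice $L=(x_1+\cdots+x_n)^{2}$, whose Hessian happens to be a nonnegative rank-one matrix.
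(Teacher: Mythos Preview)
Your proof is correct and follows essentially the same approach as the paper: both apply Theorem \ref{central} with $m=0$ and $L(x_1,\ldots,x_n)=(x_1+\cdots+x_n)^2$, compute $L_{,k}=2\sum_r x_r$ and $L_{,kl}=2$, and then read off \eqref{fundtensorsum} from \eqref{fundamentalTensor}. You simply spell out the verification of conditions (A)--(C) and the rank-one structure of $\mathrm{Hess}(L)$ in more detail than the paper does.
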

\begin{proof}
Apply Theorem \ref{central} to $L:\R^n\times M\rightarrow \R$,
defined as $L((x_1,\dots, x_n),p)= (\sum_r x_r)^2$ (recall
$L_{,k}=2(\sum_r x_r)$, thus $L_{,kl}=2$), and apply Eq.
\eqref{fundamentalTensor}.
\end{proof}
As a generalization to be compared with the results on Randers
metrics below:
\begin{cor}\label{rrandersr}
Consider $n$ conic Finsler metrics and $m$ one-forms as above.
Then,
\[R(v):=\left(\sum_{k=1}^n
F_k(v)^{q}+\sum_{\mu=k+1}^{m+n}
|\beta_\mu(v)|^{q}\right)^{\frac{1}{q}}\] is a conic
pseudo-Finsler metric with  domain $A$ if $q\geq 2$, and with
domain
\[A_R:=A\setminus\{v\in TM: \beta_\mu(v)= 0 \text{ for some $\mu=m+1,\ldots,m+n$}\},\]
for  $2>q\geq 1$. At each case, the  fundamental tensor $g$ is
given by:
\begin{align}\label{fundtensorsquare}
R(v)^{2q-2}g_v(w,w)=&  R(v)^q \sum_k F_k(v)^{q-2}
h_v^k(w,w)\nonumber\\& + \frac
12(q-1)\sum_{k,l}(F_k(v)F_l(v))^q\left(\frac{g_v^k(v,w)}{F_k(v)^2}-\frac{g_v^l(v,w)}{F_l(v)^2}\right)^2
\nonumber\\& + \frac
12(q-1)\sum_{\mu,\nu}|\beta_\mu(v)\beta_\nu(v)|^q\left(\frac{\beta_\mu(w)}{\beta_\mu(v)}-\frac{\beta_\nu(w)}{\beta_\nu(v)}\right)^2
\nonumber\\& +  (q-1) \sum_{k,\mu} |F_k(v)\beta_\mu(v)|^q
\left(\frac{g_v^k(v,w)}{F_k(v)^2}-\frac{\beta_\mu(w)}{\beta_\mu(v)}\right)^2
\nonumber\\& +\left(\sum_k F_k(v)^{q-2}g_v^k(v,w)+\sum_\mu
|\beta_\mu(v)|^{q-2}  \beta_\mu(v)  \beta_\mu(w)  \right)^2,
\end{align}
for all $v\neq 0$ in the conic domain and $w\in T_{\pi(v)}M$.

As a consequence,  $g$ is always positive semi-definite and, if
$n\geq 1$, $R$ is a conic Finsler metric.
\end{cor}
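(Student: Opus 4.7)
The plan is to apply Theorem~\ref{central} to the function
$$L(x_1,\ldots,x_{n+m},p) = \Bigl(\sum_{k=1}^n x_k^q + \sum_{\mu=n+1}^{n+m}|x_\mu|^q\Bigr)^{2/q},$$
defined on a suitable conic open $B\subseteq\R^{n+m}$. The first $n$ arguments are evaluated on $F_k(v)>0$, so the summands $x_k^q$ cause no regularity issue; however $x\mapsto|x|^q$ is $C^2$ on $\R$ only for $q\geq 2$, which forces the exclusion of $x_\mu=0$ when $1\leq q<2$, and this is precisely the passage from $A$ to $A_R$. Homogeneity of degree~$2$ and the identity $L(F_1(v),\ldots,\beta_{n+m}(v),\pi(v))=R(v)^2$ are immediate, so Theorem~\ref{central} yields that $R$ is a conic pseudo-Finsler metric on the stated domain.

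To derive the explicit formula \eqref{fundtensorsquare}, I would compute, with $S=\sum_k x_k^q+\sum_\mu|x_\mu|^q$ (which equals $R^q$ at the evaluation point \eqref{epbm}),
$$L_{,k}=2R^{2-q}F_k^{q-1},\qquad L_{,\mu}=2R^{2-q}|\beta_\mu|^{q-2}\beta_\mu,$$
and observe that $\mathrm{Hess}(L)$ decomposes as a diagonal matrix $D$ with entries $2(q-1)R^{2-q}F_k^{q-2}$ (resp.\ $2(q-1)R^{2-q}|\beta_\mu|^{q-2}$) plus the rank-one matrix $2(2-q)R^{2-2q}\,E\otimes E$, where the vector $E$ has components $F_k^{q-1}$ and $|\beta_\mu|^{q-2}\beta_\mu$. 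Substituting into \eqref{fundamentalTensor}, the $L_{,k}/F_k$ piece contributes $R^{2-q}\sum_k F_k^{q-2}h_v^k(w,w)$, which upon multiplication by $R^{2q-2}$ gives the first summand of \eqref{fundtensorsquare}. For the Hessian piece, set $p_r=F_k^q$ or $|\beta_\mu|^q$ and $A_r=g_v^k(v,w)/F_k^2$ or $\beta_\mu(w)/\beta_\mu(v)$, so that the components $P_r$ of $P(v,w)$ satisfy $E_rP_r=p_rA_r$. Then
$$P\,\mathrm{Hess}(L)\,P^T = 2(q-1)R^{2-q}\sum_r p_r A_r^2 + 2(2-q)R^{2-2q}\Bigl(\sum_r p_r A_r\Bigr)^2.$$
The crucial step is the elementary identity
$$\Bigl(\sum_r p_r\Bigr)\Bigl(\sum_r p_r A_r^2\Bigr) - \Bigl(\sum_r p_r A_r\Bigr)^2 = \tfrac{1}{2}\sum_{r,s}p_rp_s(A_r-A_s)^2,$$
which, using $\sum_r p_r=R^q$, converts $\sum_r p_rA_r^2$ into $R^{-q}\bigl[(\sum_rp_rA_r)^2+\tfrac{1}{2}\sum_{r,s}p_rp_s(A_r-A_s)^2\bigr]$. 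Collecting, the coefficient $2(q-1)+2(2-q)=2$ in front of $(\sum_r p_r A_r)^2$ produces, after the $1/2$ of \eqref{fundamentalTensor} and multiplication by $R^{2q-2}$, the final perfect square of \eqref{fundtensorsquare}, while splitting $\sum_{r,s}$ into the $(k,l)$, $(\mu,\nu)$ and $(k,\mu)$ blocks explains the coefficient $(q-1)/2$ on the two homogeneous sums and $(q-1)$ on the cross one (the latter counted twice by $\sum_{r,s}$).

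For positivity, every summand of \eqref{fundtensorsquare} is manifestly non-negative: $R>0$, the angular metrics $h_v^k$ are positive semi-definite, and the squared differences carry the factor $(q-1)\geq 0$; this gives positive semi-definiteness of $g$ at once. When $n\geq 1$ and $w\ne 0$, strict positivity of $g_v(w,w)$ follows by a dichotomy: if $w$ is not proportional to $v$, then $h_v^1(w,w)>0$ since $g^1$ is positive definite and its angular metric is degenerate only along $v$; if instead $w=\lambda v$ with $\lambda\ne 0$, every $A_r$ equals $\lambda$ so the squared differences vanish, but the final perfect square evaluates to $\lambda^2R^{2q}>0$. The principal obstacle is the algebraic repackaging of the Hessian quadratic into the form of \eqref{fundtensorsquare}, which is mechanical once the identity above is invoked but requires careful bookkeeping with the exponents and the three blocks of indices.
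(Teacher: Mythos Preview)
Your proof is correct and follows essentially the same route as the paper: both apply Theorem~\ref{central} to $L=(\sum|a_r|^q)^{2/q}$, compute the first and second partials, and rewrite the Hessian quadratic form as the sum of squared differences plus a single perfect square before substituting into \eqref{fundamentalTensor}. The only cosmetic difference is that the paper states the identity for $x\,\mathrm{Hess}(L)\,x^T$ directly, whereas you reach it via the diagonal-plus-rank-one decomposition and the Lagrange identity $(\sum p_r)(\sum p_rA_r^2)-(\sum p_rA_r)^2=\tfrac12\sum_{r,s}p_rp_s(A_r-A_s)^2$; your positivity dichotomy is also spelled out a bit more explicitly than in the paper.
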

\begin{proof}
Consider the function $L: B \times M\rightarrow \R$ with
\[B=\R^{n+m}\setminus\{(x_1,\dots , x_{n+m}): x_r=0  \text{ for some } r=1,\ldots,n+m\}\]
 defined as
\[L\left((a_1,a_2,\ldots,a_{n+m}),p\right)=\sqrt[q]{(|a_1|^q+|a_2|^q+\cdots+|a_{n+m}|^q)^2},\]
Let us denote $U=|a_1|^q+|a_2|^q+\cdots+|a_{n+m}|^q.$ Then
$L_{,r}=2 a_r |a_r|^{q-2} \, U^{\frac{2}{q}-1}$ and
\[L_{,rs}=2\delta_{rs}  (q-1) |a_r|^{q-2} \, U^{ \frac{2}{q}-1}
+2(2-q) a_ra_s |a_ra_s|^{q-2} U^{ \frac{2}{q}-2},\]
where $\delta_{rs}$ is  Kronecker's delta. Observe that if
$x=(x_1,x_2,\ldots,x_{n+m})\in\R^{n+m}$,
\[x{\rm Hess}(L) x^T=(q-1)U^{\frac{2}{q}-2}\sum_{r,s}|a_ra_s|^q\left(\frac{x_r}{a_r}-\frac{x_s}{a_s}\right)^2
+2U^{\frac{2}{q}-2}(\sum_r x_r  a_r|a_r|^{q-2}  )^2.\] The
expression of the fundamental tensor \eqref{fundtensorsquare}
follows easily by using the last identity to compute
\eqref{fundamentalTensor}
---and, then, the other assertions follow directly.
\end{proof}
\begin{rem}\label{consequences}
Last corollaries can be  useful in different situations. For
example,   let $F$ be a conic Finsler metric  on some open subset
$U$ of $M$, and $C$  a closed subset of $M$ included in $U$. If
$\tilde F$ is any conic Finsler metric on $M\setminus C$, by using
a partition of the unity, there exists a conic Finsler metric
$F^*$ defined on all $M$ which extends $F$ on $C$ and $\tilde F$
on $M\backslash U$ (compare with Proposition \ref{punidad} and
Remark \ref{runidad}). The following corollary develops a
different application for the isometry group ${\rm Iso}(M,F)$ of a
Finsler manifold.
\end{rem}

\begin{cor} \label{cisom} Let $F$ be a non-reversible Finsler metric. Then,
\begin{align*}
\tilde{F}(v)&=F(v)+F(-v),&
\hat{F}(v)&=\sqrt{F(v)^2+F(-v)^2},
\end{align*}
for $v\in TM$, are reversible  Finsler metrics and:
\[{\rm Iso}(M,F) \subseteq {\rm Iso}(M,\tilde{F})\cap {\rm Iso}(M,\hat{F}).\]
 Moreover, this inclusion becomes an equality for the
connected components of the identity of each side.
\end{cor}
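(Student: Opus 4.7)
My plan is as follows. The reversibility of $\tilde F$ and $\hat F$ is immediate from the definitions, since $v\mapsto -v$ exchanges $F(v)$ with $F(-v)$. To see that both are Finsler metrics, I would first observe that $\bar F(v):=F(-v)$ is itself a Finsler metric on $M$: its fundamental tensor at $v$ equals $g_{-v}$ (by the chain rule applied to $\tfrac12\bar F^2=\tfrac12 F^2\circ(-\mathrm{Id})$), which is positive definite. Then $\tilde F=F+\bar F$ is a Finsler metric by Corollary \ref{sum}, and $\hat F=\sqrt{F^2+\bar F^2}$ is a Finsler metric by Corollary \ref{rrandersr} applied with $n=2$, $m=0$, $q=2$.

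The inclusion ${\rm Iso}(M,F)\subseteq {\rm Iso}(M,\tilde F)\cap {\rm Iso}(M,\hat F)$ is a short check: if $\phi\in{\rm Iso}(M,F)$ then $F(d\phi\,v)=F(v)$ for every $v\in TM$, and evaluating at $-v$ gives $F(-d\phi\,v)=F(-v)$; both $\tilde F$ and $\hat F$ are therefore preserved.

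The substantive content is the equality of the identity components. The plan is to combine the two preservation identities algebraically. If $\phi\in{\rm Iso}(M,\tilde F)\cap {\rm Iso}(M,\hat F)$, then for every $v$ one has $F(d\phi\,v)+F(-d\phi\,v)=F(v)+F(-v)$ and $F(d\phi\,v)^2+F(-d\phi\,v)^2=F(v)^2+F(-v)^2$. Two nonnegative reals are determined by their sum and sum of squares, so this forces
\[\{F(d\phi\,v),\,F(-d\phi\,v)\}=\{F(v),\,F(-v)\},\]
which means that at every $v$ either $F(d\phi\,v)=F(v)$ or $F(d\phi\,v)=F(-v)$.

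The main obstacle, as I see it, is promoting this pointwise dichotomy to the globally consistent choice $F(d\phi\,v)=F(v)$; this is where the identity component and the non-reversibility of $F$ enter. Given $\phi$ in the identity component, I would fix a continuous path $\phi_t$ from $\phi_0=\mathrm{Id}$ to $\phi_1=\phi$ inside the intersection. For each fixed $v$, the sets $Z_+^v=\{t\in[0,1]:F(d\phi_t v)=F(v)\}$ and $Z_-^v=\{t\in[0,1]:F(d\phi_t v)=F(-v)\}$ are closed by continuity, their union is $[0,1]$, and $0\in Z_+^v$. When $F(v)\neq F(-v)$ these sets are disjoint, so the connectedness of $[0,1]$ forces $Z_+^v=[0,1]$; when $F(v)=F(-v)$, both alternatives of the dichotomy yield $F(d\phi_t v)=F(v)$ trivially. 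The non-reversibility hypothesis guarantees that the first case is genuinely present, making the conclusion non-trivial. Hence $F(d\phi_t v)=F(v)$ for every $t$ and every $v$, so the whole path lies in ${\rm Iso}(M,F)$ and $\phi$ is in its identity component.
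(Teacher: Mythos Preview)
Your proof is correct and follows essentially the same approach as the paper: both arguments use that preservation of $\tilde F$ and $\hat F$ determines the unordered pair $\{F(d\phi\,v),F(-d\phi\,v)\}=\{F(v),F(-v)\}$ (the paper writes this via the explicit formula $F=\tfrac12(\tilde F\pm\sqrt{2\hat F^2-\tilde F^2})$, you via ``sum and sum of squares determine the pair''), and then resolve the sign ambiguity by a continuity/connectedness argument along a path to the identity. Your version is slightly more explicit about the path argument and about why $\tilde F,\hat F$ are Finsler (citing Corollaries~\ref{sum} and~\ref{rrandersr}); the only point you use tacitly is that the identity component of the intersection is path-connected, which holds because it is a closed subgroup of the Lie group ${\rm Iso}(M,\tilde F)$.
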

\begin{proof}
The inclusion ${\rm Iso}(M,F) \subseteq {\rm Iso}(M,\tilde{F})\cap
{\rm Iso}(M,\hat{F})$ is straightforward.    Clearly, it also
holds  for the connected components of the identity and, for the
converse, let $\phi$ belong to the connected component of the
right hand side. Recall that $$F(v)=\frac{1}{2}
\left(\tilde{F}(v)\pm \sqrt{2\hat{F}(v)^2-\tilde{F}(v)^2}\right)$$
with positive  sign  if $F(v)-F(-v)\geq 0$ and with  negative  sign otherwise.
Both expressions at  right hand (with  both  signs) are
$\phi$-invariant, and the radicand of the root is equal to
$\left({F}(v)-{F}(-v)\right)^2$. Therefore, $|F(d\phi (v))-F(-
d\phi (v))|= |F(v)-F(-v)|$ for all $v$, and the equality holds
even if the absolute values are removed, as $\phi$ belongs to the
connected part of the identity. Thus, $F(d\phi(v))=F(v)$ is a
consequence of the expression above for $F$ (applied on $v$ and
$d\phi(v)$) corresponding to the sign of $F(v)-F(-v)$.
\end{proof}

\begin{rem}  (1)  Notice that the strict inclusion can hold. In fact,
for any non-reversible Minkowski norm $\|\cdot\|\equiv F$ on a
vector space $V$, minus the identity belongs to $\left({\rm
Iso}(V,\tilde F)\cap {\rm Iso}(V,\hat F)\right)\setminus {\rm
Iso}(V,F)$.

(2) Corollary \ref{cisom} reduces  the proof  that the group of
 isometries of any  Finsler metric is a Lie group, to the
 reversible case (simplifying, say,  the proof in \cite[Theorem
 3.2]{DengHou02}, which modifies Myers-Steenrod  \cite{MySt39} technique for -symmetric- distances).
An alternative proof has been also developed recently in \cite[Section 2]{MRTZ09}
by proving that ${\rm Iso}(M,F)$ is always a closed subgroup of
the isometry group of a suitable average Riemannian metric, see
also \cite{ricardo}.

\end{rem}

\subsection{The case of $(F_0,\beta)$ metrics}\label{s3b}
 Next, we will focus on the
application of Theorem \ref{central}  on just one Finsler metric
$F_0$ with conic domain $A_0\subseteq TM$ and one 1-form $\beta$.
A very important case is that of $(\alpha,\beta)$-metrics
introduced by M. Matsumoto in \cite{Mat72} (see also
\cite{Bucataru,KAM95,MSS10,Mat89,Mat91b,Mat91c,Mat92,SSI08}) as a
generalization of Randers, Kropina and Matsumoto metrics. Here
$\alpha$ denotes the square root of a Riemannian metric and
$\beta$ a one-form on $M$.
In our general setting, we will consider  combinations of the form
$F_0+\beta$, $F_0^2/\beta$ and $\frac{F_0}{F_0-\beta}$, which
generalize the Randers, Kropina and Matsumoto
$(\alpha,\beta)$-cases (compare 
also with \cite{Has88}). Such combinations  were named {\em
$\beta$-changes} in \cite{Shi84} but, in concordance  with our
approach, they will be called here $(F_0,\beta)$-metrics, that is,
a conic pseudo-Finsler  metric $F$ is an {\em
$(F_0,\beta)$-metric} if it is obtained as a positively
homogeneous combination of a conic Finsler metric $F_0$ and a
one-form $\beta$.

In order to characterize the conic domains for the strong
convexity of an $(F_0,\beta)$-metric, we will give first a
necessary condition.
\begin{prop}\label{reciproco}
 Assume that the $(F_0,\beta)$-metric $F=\sqrt{L(F_0(v),\beta(v),x)}$ is a conic Finsler metric and the dimension of $M$ is $N>2$.
Then $L_{,1}>0$. 
\end{prop}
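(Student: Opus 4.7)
The plan is to exploit a dimensional count: when $n=m=1$, the ``non-angular'' part of the fundamental tensor depends on $w$ only through the two linear functionals $w\mapsto g_v^0(v,w)/F_0(v)$ and $w\mapsto \beta(w)$, so it must vanish on a subspace of codimension at most $2$, which is nontrivial precisely when $N>2$.

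First, I would apply Theorem~\ref{central} with $n=m=1$ to write, for any fixed $v\in A\setminus\{0\}$ and any $w\in T_{\pi(v)}M$,
\begin{equation*}
g_v(w,w)=\frac{L_{,1}}{2F_0(v)}\,h_v^0(w,w)+\frac{1}{2}P(v,w)\,\mathrm{Hess}(L)\,P(v,w)^{T},
\end{equation*}
where $P(v,w)=\bigl(g_v^0(v,w)/F_0(v),\beta(w)\bigr)\in\R^{2}$ and all derivatives of $L$ are evaluated at $(F_0(v),\beta(v),\pi(v))$. Since $\mathrm{Hess}(L)$ is fixed at this point, the second term is a quadratic form in the pair $P(v,w)$.

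Next, I would observe that the linear map $w\mapsto P(v,w)$ from $T_{\pi(v)}M$ to $\R^{2}$ has a kernel $K$ of dimension at least $N-2\ge 1$, so there exists a nonzero $w_0\in K$. Because $g_v^0(v,v)=F_0(v)^{2}>0$, the vector $v$ itself does not lie in $K$, hence $w_0$ is not proportional to $v$; as the angular metric $h_v^0$ is positive semi-definite and degenerate only along the direction of $v$ (recall \eqref{angularmetric}), this forces $h_v^0(w_0,w_0)>0$. Plugging $w_0$ into the formula above, the Hessian term drops out and we are left with
\begin{equation*}
g_v(w_0,w_0)=\frac{L_{,1}}{2F_0(v)}\,h_v^0(w_0,w_0).
\end{equation*}

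Finally, since $F$ is a conic Finsler metric, $g_v$ is positive definite and $g_v(w_0,w_0)>0$; as $F_0(v)>0$ and $h_v^0(w_0,w_0)>0$, this compels $L_{,1}(F_0(v),\beta(v),\pi(v))>0$. Running this argument for each $v\in A\setminus\{0\}$ yields $L_{,1}>0$ on the relevant locus. I do not anticipate a serious obstacle here; the only subtlety worth double-checking is that the chosen $w_0$ is genuinely transverse to $v$, which is guaranteed by $g_v^0(v,\cdot)$ being non-zero on $v$ and hence independent of the condition $\beta(w_0)=0$. The hypothesis $N>2$ is used exactly to ensure $\dim K\ge 1$; in dimension $2$ the map $w\mapsto P(v,w)$ could be injective and the argument collapses.
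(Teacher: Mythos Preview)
Your proof is correct and takes essentially the same approach as the paper: both isolate a nonzero vector $w_0$ with $g_v^0(v,w_0)=0$ and $\beta(w_0)=0$, so that only the angular term survives and positive definiteness forces $L_{,1}>0$. The paper does this by choosing a $g_v^0$-orthonormal basis with $e_1=v/F_0(v)$ and the dual of $\beta$ in $\mathrm{span}\{e_1,e_2\}$, then taking $w_0=e_3$; your dimension count on the kernel of $w\mapsto P(v,w)$ is just the coordinate-free version of the same idea.
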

\begin{proof}
Let $v\in A\setminus 0$,
and denote as $z\in T_{\pi(v)}M$ the $g_v^0$-dual of
$\beta$, i.e. $\beta(w)=g_v^0(w,z)$ for all $w\in T_{\pi(v)}M$.
Consider an orthonormal basis $e_1,\ldots, e_N$ for $g_v^0$ such
that $e_1=v/F_0(v)$ and $z=\lambda^1 e_1+\lambda^2 e_2$ for some
$\lambda^1,\lambda^2\geq 0$. Putting
$w=\sum_{i=1}^N w^ie_i$ and using \eqref{fundamentalTensor}:
\begin{equation}\label{ejemejem}
2 g_v(w,w)= \frac{1}{F_0} L_{,1} \sum_{i=2}^N (w^i)^2 + L_{,11}
(w^1)^2  + 2L_{,12}  (\lambda^1 w^1+\lambda^2 w^2)w^1
+ L_{,22} (\lambda^1 w^1+\lambda^2 w^2)^2\\
\end{equation}
and the result follows by choosing $w=e_3$.

\end{proof}

\begin{rem}\label{rL1defposit2} Rewriting the terms in \eqref{ejemejem}
as $a (w^1)^2+2b w^1 w^2+c (w^2)^2+ d\sum_{i\geq 3} (w^i)^2$ the
metric $g_v$ is positive definite if and only if $d>0$, $c>0$ and
$ac-b^2> 0$, i.e., $L_{,1}>0$, $(\lambda^2)^2L_{,22}+\frac 1F_0
L_{,1}>0$ and
\[(L_{,11}+2\lambda^1 L_{,12}+(\lambda^1)^2 L_{,22})(L_{,22}(\lambda^2)^2+\frac 1F_0 L_{,1})-(\lambda^2)^2(L_{,12}+\lambda^1 L_{,22})^2 > 0.\]
  For $N=2$ the term in $d$ does not appear, and the  condition $L_{,1}>0$ is dropped.
\end{rem}

\subsubsection{Canonical form $F=F_0\cdot\phi(\beta/F_0)$} Next, for any $(F_0,\beta)$-metric $F=\sqrt{L(F_0,\beta)}$
we define $\phi(s)=\sqrt{L(1,s)}$ and, thus,
$F=F_0\cdot\phi(\beta/F_0)$. This change yieds convenient
expressions for $(F_0,\beta)$-metrics, including the Kropina,
Matsumoto and Randers ones. The next result is related to one by
Chern and Shen in \cite{ChSh05}, which is discussed as an Appendix
(Subsection \ref{sA}).
\begin{prop}\label{functionpsi}
Let $\phi:I\subseteq\R\rightarrow \R$ be a smooth function
 satisfying $\phi>0$. Given a conic Finsler metric
 $F_0:A_0\subseteq TM\rightarrow \R$ and a one-form $\beta$, consider
 the conic pseudo-Finsler metric
\[F(v)=F_0(v)\phi\left(\frac{\beta(v)}{F_0(v)}\right)\]
on  $A:=\{v\in A_0: \beta(v)/F_0(v)\in I \}$ with the
convention $0_p\in A_p(\subset A)$ if and only if
$T_pM\setminus\{0_p\}\subset A_p$, for each $p\in M$.  Put
$\psi(s):=\phi^2(s)$ for all $s\in I$ and define the functions:
\be\label{psipsi} \varphi_1:=2\psi-s\dot\psi \; (=2\phi(\phi
-s\dot \phi)), \quad \varphi_2:=2\psi\ddot\psi-\dot\psi^2 \;
(=4\ddot \phi \phi^3).\ee The fundamental tensor $g$ of $F$ is
determined by
\begin{multline}\label{alphabetatensor}
 2  g_v(w,w)= \varphi_1\left(\frac{\beta(v)}{F_0(v)}\right) h^0_v(w,w) \\
+\frac12 \psi\left(\frac{\beta(v)}{F_0(v)}\right)^{-1}\varphi_2\left(\frac{\beta(v)}{F_0(v)}\right)\left(\frac{\beta(v)}{F_0(v)^2}g_v^0(v,w)-\beta(w)\right)^2 \\
+  \frac12 \psi\left(\frac{\beta(v)}{F_0(v)}\right)^{-1}
\left(\varphi_1\left(\frac{\beta(v)}{F_0(v)}\right)\frac{g_v^0(v,w)}{F_0(v)}+\dot\psi\left(\frac{\beta(v)}{F_0(v)}\right)\beta(w)\right)^2,
\end{multline}
where $h_v^0$ is the $F_0$-angular metric defined in
\eqref{angularmetric},  $v\in A \setminus 0$ and $w\in
T_{\pi(v)}M$.

Moreover, $F$ is a conic Finsler metric on $A$ if
\begin{align}
 \varphi_1=2\psi-s\dot\psi>0 \; (\text{i.e.} \, \phi-s\dot\phi>0)
&&\text{and}&& \varphi_2=2\psi(s)\ddot\psi(s)-\dot\psi(s)^2 \geq
0 \; (\text{i.e.} \, \ddot\phi&\geq 0). \label{eqpsi}
\end{align}
\end{prop}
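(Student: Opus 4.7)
My plan is to recognize $F = F_0\phi(\beta/F_0)$ as an instance of Theorem~\ref{central} with $n = m = 1$, $F_1 = F_0$ and $\beta_2 = \beta$, by taking
\[
L(x_1,x_2,p) = x_1^2\,\psi(x_2/x_1),
\]
where $\psi = \phi^2 > 0$. One checks that $L$ is smooth, positive, and positively $2$-homogeneous on the conic domain $B\times M$ where $x_2/x_1 \in I$. Hence Theorem~\ref{central} directly yields that $F = \sqrt L$ is a conic pseudo-Finsler metric with domain $A = \{v\in A_0 : \beta(v)/F_0(v)\in I\}$, and supplies the formula for $g_v$ up to the explicit computation of the partial derivatives of $L$.

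Second, to reach the closed form \eqref{alphabetatensor}, I would compute these partials. Writing $s = x_2/x_1$ and using the chain rule,
\[
L_{,1} = x_1\,\varphi_1, \qquad L_{,2} = x_1\,\dot\psi,
\]
and, differentiating once more,
\[
L_{,11} = 2\psi - 2s\dot\psi + s^2\ddot\psi, \qquad L_{,12} = \dot\psi - s\ddot\psi, \qquad L_{,22} = \ddot\psi.
\]
Substituting in \eqref{fundamentalTensor}, the angular-metric term collapses to $\tfrac12\varphi_1 h_v^0(w,w)$ because $L_{,1}/F_0(v) = \varphi_1$ when $x_1 = F_0(v)$. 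For the Hessian term, set $a := g_v^0(v,w)/F_0(v)$ and $b := \beta(w)$, so that $P(v,w) = (a,b)$ and
\[
P(v,w)\,\mathrm{Hess}(L)\,P(v,w)^T = L_{,11}\,a^2 + 2L_{,12}\,ab + L_{,22}\,b^2.
\]
The decisive algebraic step is the completion of squares
\[
L_{,11}\,a^2 + 2L_{,12}\,ab + L_{,22}\,b^2 \;=\; \psi^{-1}\!\left[\tfrac12\,\varphi_2\,(sa - b)^2 + \tfrac12\,(\varphi_1\,a + \dot\psi\,b)^2\right],
\]
which I would verify by matching the coefficients of $a^2$, $ab$ and $b^2$; the $a^2$ coefficient reduces to $L_{,11}$ via $(2\psi - s\dot\psi)^2 + s^2(2\psi\ddot\psi - \dot\psi^2) = 2\psi(2\psi - 2s\dot\psi + s^2\ddot\psi)$, and the $b^2$ coefficient reduces to $L_{,22}$ via $\tfrac12(2\psi\ddot\psi - \dot\psi^2) + \tfrac12\dot\psi^2 = \psi\ddot\psi$. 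Substituting back $a$ and $b$ gives \eqref{alphabetatensor}.

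Third, the conic Finsler conclusion is read off directly from \eqref{alphabetatensor}. Under $\varphi_1 > 0$ and $\varphi_2 \geq 0$, the first summand is non-negative, and in fact strictly positive whenever $w$ is not collinear with $v$, since $h_v^0$ is positive-definite on the $g_v^0$-orthogonal complement of $v$ (strict Cauchy--Schwarz). The remaining two summands are squares multiplied by $(2\psi)^{-1} > 0$, hence non-negative. For $w = \lambda v$ with $\lambda \neq 0$, Proposition~\ref{fundamentalprop} immediately gives $g_v(w,w) = \lambda^2 F(v)^2 > 0$. Together these show $g_v$ is positive definite on $A\setminus\{0\}$.

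The principal obstacle is the completion-of-squares identity: it is mechanical but easy to mishandle, and its success hinges on the deliberate choice of $\varphi_1$ and $\varphi_2$ (the latter being, up to sign, the determinant of $\mathrm{Hess}(L)$). Once this identity is in hand, the positive-definiteness argument is essentially automatic and does not require the full strength of conditions (A)--(C) of Theorem~\ref{central}; it works purely from the shape of \eqref{alphabetatensor}.
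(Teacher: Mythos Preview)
Your proposal is correct and follows essentially the same route as the paper's own proof: both recognize $F$ as the special case $L(a,b)=a^2\psi(b/a)$ of Theorem~\ref{central}, compute the same partial derivatives of $L$, perform the same completion-of-squares identity for the Hessian quadratic form (which the paper records as \eqref{hessianoL}), and then read off positive definiteness directly from the resulting expression \eqref{alphabetatensor}. Your write-up is slightly more explicit in verifying the completion of squares and in handling the case $w=\lambda v$ separately, but the strategy is identical.
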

\begin{proof}
To compute $g$, put $L(a,b)=a^2\psi(b/a)$ defined in
\[\{(a,b)\in
\R^2\setminus \{(0,s):s\in\R\}: b/a\in I\}.\] Then,
\begin{align*}
 L_{,1}(a,b)&=2a\, \psi\left(b/a\right)-b\,\dot\psi\left(b/a\right),&\\
L_{,11}(a,b)&=2\, \psi\left(b/a\right)-2\frac{b}{a}\,\dot\psi\left(b/a\right)+\frac{b^2}{a^2}\,\ddot\psi\left(b/a\right),&\\
L_{,12}(a,b)&=\dot\psi\left(b/a\right)-\frac{b}{a}\,\ddot\psi\left(b/a\right),&\\
L_{,22}(a,b)&=\ddot\psi\left(b/a\right).
\end{align*}
The expression of $g_v$ follows easily from
\eqref{fundamentalTensor} and the expressions for $L_{,1},L_{,rs}$
above, recalling that
\begin{multline}\label{hessianoL}
\left(\begin{array}{cc} x_1 & x_2
\end{array}\right){\rm Hess (L)}
\left(\begin{array}{c}
x_1\\
x_2
\end{array}\right)=\frac{2\psi(s)\ddot\psi(s)-\dot\psi(s)^2}{2\psi(s)}(s x_1-x_2)^2\\+\frac{1}{2\psi(s)}((2\psi(s)-s\dot\psi(s))x_1+\dot\psi(s) x_2)^2,
\end{multline}
where  $s=b/a$ and $x_1,x_2\in\R$. The positive definiteness of $g_v$  under conditions \eqref{eqpsi}
is immediate from expression \eqref{alphabetatensor}.


\end{proof}

\begin{rem} \label{rL1defposit} In the set of sufficient conditions $\varphi_1>0,
\varphi\geq 0$  (Eq. \eqref{eqpsi}), the first one is also
necessary (to obtain the positive definiteness of $g$) when $N>2$,
as $L_{,1}(F_0(v),\beta(v)) = F_0(v) \varphi_1(s)$ for
$s=\beta(v)/F(v)$ (recall Proposition \ref{reciproco}).
\end{rem}

\subsubsection{Kropina type metrics}
 Kropina metrics are
$(\alpha, \beta)$-metrics of the form $\alpha/\beta$,  which were
first studied by Kropina in \cite{Kro59} (see also
\cite{Mat91a,Shi78,SiSi85,YoOk07}). In our extension, we consider
not only an $(F_0,\beta)$-metric but also introduce an arbitrary
exponent $q>0$. We remark that in the paper \cite{SPK03}, the
authors study when two Finsler metrics $F$ and $F_0$ are related
by $F=F_0/\beta$ for some one-form $\beta$ and in \cite[formula
(6)]{Bogos}, the authors consider a quadratic Finsler metric
of this type. 

\begin{cor}\label{cKropina} Consider  $F=F_0^{q+1}/|\beta|^{q}$
and $A=\{v\in A_0 :  \beta(v)\not=0\}.$
 Then $F$ is a conic Finsler metric
defined on $A$ for every $q>0$, with fundamental tensor determined
 by
\begin{multline}\label{gkropina}
\left|\frac{\beta(v)}{F_0(v)}\right|^{2q} g_v(w,w)=(q+1)h^0_v(w,w) \nonumber\\
 +q(q+1)
 \left(\frac{g_v^0(v,w)}{F_0(v)}-\frac{F_0(v)}{\beta(v)}\beta(w)\right)^2\\
 +  \left((q+1)\frac{g_v^0(v,w)}{F_0(v)}-q\frac{F_0(v)}{\beta(v)}\beta(w)\right)^2\nonumber
\end{multline}
where $v\in A$ and $w\in T_{\pi(v)}M$.  In particular, any Kropina
metric $F=\alpha/|\beta|$ is strongly convex in its natural domain
of definition.
\end{cor}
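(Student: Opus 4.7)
The plan is to apply Proposition~\ref{functionpsi} by rewriting $F$ in the canonical form $F_0\cdot\phi(\beta/F_0)$. Since $F=F_0^{q+1}/|\beta|^q=F_0\cdot(F_0/|\beta|)^q$, the correct choice is $\phi(s)=|s|^{-q}$, which gives $\psi(s)=\phi(s)^2=|s|^{-2q}$. On the domain $A=\{v\in A_0:\beta(v)\neq 0\}$ the argument $s=\beta(v)/F_0(v)$ never vanishes, so $\psi$ is smooth and nonzero there.

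The first step is to verify the sufficient condition \eqref{eqpsi} of Proposition~\ref{functionpsi}. Writing $|s|^{-2q-2}=(s^2)^{-q-1}$ to avoid sign issues, differentiation gives $\dot\psi(s)=-2q|s|^{-2q}/s$ and $\ddot\psi(s)=2q(2q+1)|s|^{-2q-2}$, whence
\[
\varphi_1(s)=2\psi-s\dot\psi=2(q+1)|s|^{-2q}>0,\qquad
\varphi_2(s)=2\psi\ddot\psi-\dot\psi^{2}=4q(q+1)|s|^{-4q-2}\geq 0
\]
for all $s\neq 0$ and $q>0$. Thus both conditions of \eqref{eqpsi} hold throughout $A$, and Proposition~\ref{functionpsi} immediately yields that $F$ is a conic Finsler metric on $A$.

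The explicit formula for $g_v$ is then obtained by substituting $\varphi_1,\varphi_2,\psi^{-1}=|s|^{2q}$ and $\dot\psi$ into \eqref{alphabetatensor} and multiplying through by $|s|^{2q}=|\beta(v)/F_0(v)|^{2q}$. The crucial simplification is the identity $|s|^{2q}\dot\psi(s)=-2q/s$: absorbing the factor $|s|^{4q}$ arising in the third term of \eqref{alphabetatensor} produces the perfect square $((q+1)g_v^0(v,w)/F_0(v)-q(F_0(v)/\beta(v))\beta(w))^2$; the second term simplifies analogously after factoring $s^2$ out of $(sg_v^0(v,w)/F_0(v)-\beta(w))^2$, yielding the $q(q+1)$-coefficient square; and the first term collapses to $(q+1)h_v^0(w,w)$. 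The main obstacle is the bookkeeping during this algebraic simplification; no new conceptual idea is required beyond Proposition~\ref{functionpsi} and the above identity for $|s|^{2q}\dot\psi$. Finally, the classical Kropina case corresponds to $q=1$ with $F_0=\alpha$ the square root of a Riemannian metric (itself a standard Finsler metric on $TM\setminus\{\text{zero section}\}$), so its strong convexity on its natural domain follows at once as a special case of the general statement.
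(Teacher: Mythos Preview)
Your proof is correct and follows essentially the same approach as the paper: both rewrite $F=F_0\phi(\beta/F_0)$ with $\phi(s)=|s|^{-q}$, compute $\varphi_1=2(q+1)|s|^{-2q}$ and $\varphi_2=4q(q+1)|s|^{-4q-2}$ to verify \eqref{eqpsi}, and then substitute into \eqref{alphabetatensor} using the identities $|s|^{2q}\psi^{-1}(s)=|s|^{4q}$ and $\dot\psi(s)=-2q|s|^{-2q}s^{-1}$.
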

\begin{proof}
Observe that $F=F_0^{q+1}/|\beta|^{q}=F_0
\left|F_0/\beta\right|^{q}$. Now if $\phi(s)=|s|^{-q}$,
$s\in\R\setminus \{0\}$, then $F=F_0\phi\left(\beta/F_0\right)$.
Moreover,  inequalities
\eqref{eqpsi} become
\begin{align*}
\varphi_1(s)=2(q+1)|s|^{-2q}&>0,&
\varphi_2(s)=4q(q+1)|s|^{-4q-2}&\geq 0,
\end{align*}
which hold whenever $q>0$.  Therefore, the formula for $g_v$ 
follows from \eqref{alphabetatensor} (use
$|s|^{2q}\psi^{-1}(s)=|s|^{4q}; \dot \psi(s)=-2q|s|^{-2q}
s^{-1}$), and
 positive definiteness is obvious from the expression of $g_v$.
\end{proof}

\subsubsection{Matsumoto type metrics}
Matsumoto metrics were found by this author when measuring the
walking time when there is a slope \cite{Mat89b}. They are
$(\alpha,\beta)$-metrics of the form $\alpha/(\alpha-\beta)$. We extend
them in the same way as Kropina's.

\begin{cor}\label{GenMatsumoto}
For any $q\neq 0$, consider the conic pseudo-Finsler metric:
$$F=F_0^{q+1}/|F_0-\beta|^q, \quad  A=\{v\in A_0 :
F_0(v)\neq \beta(v)\}.  $$ Then, its
 fundamental tensor is determined by
\begin{multline}
 \left|\frac{F_0(v)-\beta(v)}{F_0(v)}\right|^{2q+2}  g_v(w,w)=\\
\frac{(F_0(v)-\beta(v))(F_0(v)-(q+1)\beta(v))}{F_0(v)^2}h^0_v(w,w) \nonumber\\
+q(q+1)\left(\frac{\beta(v)}{F_0(v)^2}g_v^0(v,w)-\beta(w)\right)^2\nonumber\\
 +\left(\frac{F_0(v)-(q+1)\beta(v)}{F_0(v)^2}g_v^0(v,w)+q\beta(w)\right)^2,
\end{multline}
for any $v\in A$ and $w\in T_{\pi(v)}M$.

 When $q>0$ or $q\leq-1$ the restriction of $F$ to
  $$A^*=\{v\in A_0 : (F_0(v)-(q+1)\beta(v))(F_0(v)-\beta(v))>0\}$$
is conic Finsler and,  if $N>2$,   $g$ is not strongly convex at
any point of $A\setminus
 A^*$.
\end{cor}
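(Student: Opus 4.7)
The strategy is to bring $F$ into the canonical form of Proposition \ref{functionpsi} and then apply the positivity criteria \eqref{eqpsi} together with the necessary condition of Proposition \ref{reciproco}. Observe that
\[F(v)=F_{0}(v)\,\phi\!\left(\frac{\beta(v)}{F_{0}(v)}\right),\qquad \phi(s)=|1-s|^{-q},\]
so that $\psi(s):=\phi(s)^{2}=\bigl((1-s)^{2}\bigr)^{-q}$ is smooth on $\{s\neq 1\}$, which translates exactly to the domain $A=\{v\in A_{0}:F_{0}(v)\neq\beta(v)\}$.

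First, I would compute
\[\dot\psi(s)=2q(1-s)\bigl((1-s)^{2}\bigr)^{-q-1},\qquad \ddot\psi(s)=2q(2q+1)\bigl((1-s)^{2}\bigr)^{-q-1},\]
and assemble the auxiliary functions in \eqref{psipsi}:
\[\varphi_{1}(s)=2\bigl((1-s)^{2}\bigr)^{-q-1}(1-s)\bigl(1-(q+1)s\bigr),\qquad \varphi_{2}(s)=4q(q+1)\bigl((1-s)^{2}\bigr)^{-2q-1}.\]
Substituting into \eqref{alphabetatensor}, using $s=\beta(v)/F_{0}(v)$ together with $1-s=(F_{0}-\beta)/F_{0}$ and $1-(q+1)s=(F_{0}-(q+1)\beta)/F_{0}$, and clearing the common factor $2\bigl((1-s)^{2}\bigr)^{-q-1}$ produces precisely the stated formula for $\left|(F_{0}-\beta)/F_{0}\right|^{2q+2}g_{v}(w,w)$; the only care needed is that the last summand in \eqref{alphabetatensor} carries an extra $(1-s)^{2}$ coming from $\dot\psi$ and $\varphi_{1}$, which cancels one power of $\bigl((1-s)^{2}\bigr)^{-1}$ and matches the absolute-value exponent $2q+2$ on both sides.

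Second, for the strong convexity of $F|_{A^{*}}$ when $q>0$ or $q\leq-1$: since $\bigl((1-s)^{2}\bigr)^{-q-1}>0$, the sign of $\varphi_{1}(s)$ is the sign of $(1-s)\bigl(1-(q+1)s\bigr)=(F_{0}-\beta)(F_{0}-(q+1)\beta)/F_{0}^{2}$, which is strictly positive exactly on $A^{*}$. Likewise $\varphi_{2}(s)\geq 0$ iff $q(q+1)\geq 0$, i.e.\ $q>0$ or $q\leq-1$. Hence both conditions of \eqref{eqpsi} hold throughout $A^{*}$ and Proposition \ref{functionpsi} yields that $F|_{A^{*}}$ is a conic Finsler metric.

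Finally, for the failure of strong convexity on $A\setminus A^{*}$ when $N>2$: from $L(a,b)=a^{2}\psi(b/a)$ a direct differentiation gives
\[L_{,1}(F_{0}(v),\beta(v))=2F_{0}(v)\psi(s)-\beta(v)\dot\psi(s)=F_{0}(v)\,\varphi_{1}(s),\]
with $s=\beta(v)/F_{0}(v)$. Proposition \ref{reciproco} says that if $g_{v}$ is positive definite and $N>2$ then $L_{,1}>0$ at $(F_{0}(v),\beta(v))$; since $F_{0}>0$, this forces $\varphi_{1}(s)>0$ and hence $v\in A^{*}$. Contrapositively, at each $v\in A\setminus A^{*}$ one has $\varphi_{1}\leq 0$ and $g_{v}$ cannot be positive definite. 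The main obstacle in the whole argument is bookkeeping with the noninteger powers of $1-s$; writing every derivative through $\bigl((1-s)^{2}\bigr)^{\alpha}$ keeps the expressions manifestly smooth and sign-controlled and avoids any case split on the sign of $F_{0}(v)-\beta(v)$.
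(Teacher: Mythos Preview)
Your proof is correct and follows essentially the same route as the paper: write $F=F_0\phi(\beta/F_0)$ with $\phi(s)=|1-s|^{-q}$, compute $\varphi_1$ and $\varphi_2$, read off the fundamental tensor from \eqref{alphabetatensor}, verify the sufficient conditions \eqref{eqpsi} on $A^*$ for $q>0$ or $q\leq -1$, and invoke Proposition~\ref{reciproco} (via $L_{,1}=F_0\varphi_1$) for the necessity part when $N>2$. Your use of $\bigl((1-s)^2\bigr)^{\alpha}$ rather than $|1-s|^{2\alpha}$ is a tidy way to keep smoothness and signs under control, but otherwise the arguments are identical.
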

\begin{proof}
As $F=F_0^{q+1}/|F_0-\beta|^{q}=F_0
\left(F_0/|F_0-\beta|\right)^{q}$, putting $\phi(s)=|1-s|^{-q}$,
$s\not=1$, then $F=F_0 \phi \left(\beta/F_0\right)$. So,
inequalities \eqref{eqpsi} read
\begin{align*}
 \varphi_1(s)=  2 |1-s|^{-2q-2}(1-s)(1-(1+q)s)&>0,\\
\varphi_2(s)= 4 q(q+1) |1-s|^{-4q-2}&\geq 0, \end{align*}
which hold if and only if $(1-s)(1-(1+q)s)>0$ and  either
$q\geq 0$ or $q\leq -1$.   So, the formula for $g_v$ 
follows from \eqref{alphabetatensor} (use
$\psi(s)^{-1}\varphi_2(s) =4q(q+1)/|1-s|^{2q+2}; \dot
 \psi(s)=-2q|1-s|^{-2q} \, (1-s)^{-1}$), and
 strong convexity in $A^*$ from the expression of $g_v$.
 For the last assertion, apply Remark \ref{rL1defposit}
 noticing that $A^*$ determines the region where $L_{,1}>0$.
\end{proof}
\begin{rem}
(1) In the case $N=2$,  the maximal domain $A^*$ where such a
generalized Matsumoto metric is a conic Finsler one, becomes more
involved (see Remark \ref{rL1defposit2}). As a particular case of
Corollary \ref{characterization} in the Appendix, the following
necessary and sufficient condition to define $A^*$ is obtained
(for simplicity, we put  $r=1$):
\[3\beta(v)<2\|\beta \|^2_{g_v^0} +1 \quad \quad \hbox{for any} \, v\in A \; \hbox{with} \; F_0(v)=1. \]

(2)  The class of metrics of the last corollary with $F_0=\alpha$
and $r<-1$, were considered in \cite{Zhou10} in order to obtain
Finsler metrics with constant flag curvature.
\end{rem}
In particular, a known result on Matsumoto metric is recovered.
\begin{cor}\label{matsustrongly} Any
 Matsumoto metric $F=\alpha^2/|\alpha-\beta|$ is  strongly convex  in
\[A^* =\{v\in A_0 :  (\alpha(v)-2\beta(v))(\alpha(v)-\beta(v))>0\}.\]
\end{cor}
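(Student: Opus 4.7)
The plan is to derive this as an immediate specialization of Corollary~\ref{GenMatsumoto}. The classical Matsumoto metric $F=\alpha^2/|\alpha-\beta|$ fits the template $F = F_0^{q+1}/|F_0-\beta|^q$ of Corollary~\ref{GenMatsumoto} upon setting $F_0 = \alpha$ (which is a (conic) Finsler metric, being the square root of a Riemannian metric) and $q=1$. Since $q=1>0$, the hypothesis ``$q>0$ or $q\leq -1$'' in Corollary~\ref{GenMatsumoto} is satisfied, so the strong-convexity statement there applies.

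Substituting $q=1$ into the definition of the set $A^{*}$ given in Corollary~\ref{GenMatsumoto},
\[
A^{*} = \{v\in A_0 : (F_0(v)-(q+1)\beta(v))(F_0(v)-\beta(v))>0\},
\]
yields exactly
\[
A^{*} = \{v\in A_0 : (\alpha(v)-2\beta(v))(\alpha(v)-\beta(v))>0\},
\]
which matches the domain in the statement. Therefore the restriction of $F$ to $A^{*}$ is a conic Finsler metric, i.e.\ its fundamental tensor $g$ is positive definite on $A^{*}$, which is the claim.

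There is no real obstacle here: the only thing to check is the bookkeeping that $\alpha^{2}/|\alpha-\beta| = \alpha^{1+1}/|\alpha-\beta|^{1}$ and the identification of the conic domain $A^{*}$ by plugging $q=1$ into the general formula. If one wanted to be fully explicit, one could also write down the formula for $g_v(w,w)$ by inserting $q=1$ and $F_0=\alpha$ in the expression of Corollary~\ref{GenMatsumoto}, but this is not needed for the statement of strong convexity.
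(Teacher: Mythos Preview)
Your proposal is correct and matches the paper's approach exactly: the paper states this corollary immediately after Corollary~\ref{GenMatsumoto} with the remark ``In particular, a known result on Matsumoto metric is recovered'' and no further proof, so it is indeed meant as the specialization $F_0=\alpha$, $q=1$ that you carry out.
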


 As a last consequence, we consider a class of metrics which
include those  in the references \cite{CuSh09,LiSh07,ShCi08}.

\begin{cor}Let us define $F=(F_0+\beta)^2/F_0$ and
\[A=\{v\in A_0 : F_0(v)^2>\beta(v)^2\}.\]
Then $F$ is strongly convex in $A$.
\end{cor}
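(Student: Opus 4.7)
The plan is to apply Proposition \ref{functionpsi} directly. First I would rewrite the metric in its canonical form. Since
\[
F = \frac{(F_0+\beta)^2}{F_0} = F_0\left(1+\frac{\beta}{F_0}\right)^2 = F_0\,\phi\!\left(\frac{\beta}{F_0}\right),
\]
the natural choice is $\phi(s)=(1+s)^2$ on the interval $I=(-1,1)$. The domain $A=\{v\in A_0 : F_0(v)^2>\beta(v)^2\}$ is exactly $\{v\in A_0: \beta(v)/F_0(v)\in I\}$, which is the admissible domain in Proposition \ref{functionpsi}. On $I$ one has $\phi(s)=(1+s)^2>0$, so the positivity hypothesis on $\phi$ is also satisfied.

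Next I would verify the two sufficient conditions in \eqref{eqpsi}. Computing $\dot\phi(s)=2(1+s)$ and $\ddot\phi(s)=2$, one finds
\[
\phi(s)-s\dot\phi(s) = (1+s)^2 - 2s(1+s) = (1+s)(1-s) = 1-s^2,
\]
which is strictly positive for $s\in I$; and $\ddot\phi(s)=2\geq 0$ trivially. Therefore both inequalities $\varphi_1>0$ and $\varphi_2\geq 0$ hold on the entire range $\beta/F_0\in I$, so Proposition \ref{functionpsi} immediately yields that $F$ is a conic Finsler metric on $A$.

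There is no real obstacle here: the proof is a direct, almost mechanical, specialization of the previous results. If an explicit formula for the fundamental tensor is desired, one may simply substitute $\psi(s)=(1+s)^4$, $\dot\psi(s)=4(1+s)^3$, $\varphi_1(s)=2(1+s)^3(1-s)$ and $\varphi_2(s)=8(1+s)^6$ into \eqref{alphabetatensor}, but this is not needed for the convexity statement itself. It is worth noting that the boundary $F_0^2=\beta^2$ of $A$ is precisely where $\phi-s\dot\phi$ vanishes, so by Remark \ref{rL1defposit} the domain $A$ is the maximal one on which strong convexity can hold (at least when $\dim M>2$).
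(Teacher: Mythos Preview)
Your proof is correct and essentially follows the same route as the paper, which applies Corollary~\ref{GenMatsumoto} with $q=-2$ (noting that $(F_0+\beta)^2/F_0$ is the Matsumoto-type metric for $-\beta$ and $q=-2$); since that corollary is itself derived from Proposition~\ref{functionpsi}, your direct verification of \eqref{eqpsi} for $\phi(s)=(1+s)^2$ amounts to the same computation carried out without the intermediate step. One small caveat on your closing remark: at the boundary value $s=-1$ it is $\phi$ itself, not $\varphi_1$, that vanishes (so $F=0$ there), whereas at $s=1$ indeed $\varphi_1=0$; thus the maximality claim via Remark~\ref{rL1defposit} applies cleanly only at the $s=1$ part of $\partial A$.
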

\begin{proof}
Apply Corollary \ref{GenMatsumoto} with $q=-2$.
\end{proof}

\subsubsection{Randers type metrics}  Randers metrics are $(\alpha,\beta)$-metrics defined
by $\alpha+\beta$ (they are named after the work of G. Randers
\cite{Ran41} about electromagnetism). Next we will consider
Finsler metrics of the form $F_0+\beta$, which generalize Randers
construction. In particular our result can be used to prove strong
convexity of Randers metric in a more direct way than in
\cite[pag. 284]{BaChSh00}. Moreover, it follows that the
deformations of Kropina metrics considered in \cite{Rom06} are
also strongly convex.
\begin{cor}\label{randers}
 Any  metric $F=F_0+\beta$, where $A=\{v\in A_0 : F_0(v)+
\beta(v)>0\}$, is a conic Finsler metric on all $A$ with
fundamental tensor
\[g_v(w,w)=\frac{F_0(v)+\beta(v)}{F_0(v)}h^0_v(w,w)+
\left(\frac{g_v^0(v,w)}{F_0(v)}+\beta(w)\right)^2,\]
for any $v\in A\setminus 0$ and $w\in T_{\pi(v)}M$.  
\end{cor}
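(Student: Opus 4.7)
The plan is to recognize Corollary \ref{randers} as a direct instance of Proposition \ref{functionpsi} applied to the affine choice $\phi(s)=1+s$, so that $F(v)=F_0(v)+\beta(v)=F_0(v)\phi(\beta(v)/F_0(v))$ on the natural domain where $\phi$ is positive, namely the set $A=\{v\in A_0:F_0(v)+\beta(v)>0\}$.

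First I would check that the hypotheses of Proposition \ref{functionpsi} are satisfied. With $\psi(s)=\phi(s)^2=(1+s)^2$, a one-line computation gives $\dot\psi(s)=2(1+s)$ and $\ddot\psi(s)=2$, hence
\[
\varphi_1(s)=2\psi(s)-s\dot\psi(s)=2(1+s),\qquad \varphi_2(s)=2\psi(s)\ddot\psi(s)-\dot\psi(s)^2=0.
\]
On the prescribed domain $A$ the quantity $1+s=(F_0(v)+\beta(v))/F_0(v)$ is strictly positive, so $\varphi_1>0$; and $\varphi_2\equiv 0$ trivially satisfies $\varphi_2\geq 0$. Thus conditions \eqref{eqpsi} hold and Proposition \ref{functionpsi} guarantees that $F$ is a conic Finsler metric on $A$.

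For the explicit expression of the fundamental tensor I would simply substitute these values into formula \eqref{alphabetatensor}. The middle term disappears because $\varphi_2=0$; the coefficient of $h_v^0(w,w)$ reduces to $\varphi_1(s)/2=(F_0+\beta)/F_0$ after dividing by $2$; and in the last term the factor $1/(2\psi(s))$ combines with $\varphi_1(s)$ and $\dot\psi(s)$ (which here are both equal to $2(1+s)$) to yield exactly $((g_v^0(v,w)/F_0)+\beta(w))^2$. Assembling these three contributions gives the claimed formula
\[
g_v(w,w)=\frac{F_0(v)+\beta(v)}{F_0(v)}\,h_v^0(w,w)+\left(\frac{g_v^0(v,w)}{F_0(v)}+\beta(w)\right)^2.
\]

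There is no substantial obstacle: the argument is essentially a one-line verification that $\phi(s)=1+s$ is $\varphi$-admissible, followed by a clean algebraic collapse in \eqref{alphabetatensor}. The only points worth spelling out are (i) the identification of the natural domain $A$ as exactly the set where $\phi(\beta/F_0)>0$ (so that $F>0$ and smooth), and (ii) the observation that positive definiteness of $g$ on all of $A$ follows at once from the final formula, since $(F_0+\beta)/F_0>0$ on $A$, $h_v^0$ is positive semidefinite with null direction spanned by $v$, and on that null direction the second square $(g_v^0(v,v)/F_0+\beta(v))^2=(F_0+\beta)^2$ is strictly positive.
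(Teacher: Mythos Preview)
Your proposal is correct and follows essentially the same approach as the paper: both apply Proposition~\ref{functionpsi} with $\phi(s)=1+s$, note that $\varphi_1=2(1+s)>0$ on $A$ and $\varphi_2\equiv 0$, and read off the fundamental tensor from \eqref{alphabetatensor}. Your write-up is simply more detailed than the paper's two-line proof, including the explicit algebraic collapse and the direct verification of positive definiteness from the final formula.
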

\begin{proof}
 Put $F=F_0\left(1+\frac{\beta}{F_0}\right)$ and define
$\phi(s)=1+s$. As $\varphi_2\equiv 0$, inequalities \eqref{eqpsi}
reduce to $\varphi_1=2(1+s)>0$, and the proof follows from
Proposition \ref{functionpsi}.
 \end{proof}  Notice that, essentially, this result can be
also  regarded as a particular case of Corollary
\ref{rrandersr}.

\subsubsection{A further generalization: $(F_1,F_2)$-metrics}
From now on, $F_1$ and $F_2$ will be two conic Finsler metrics
defined in a common conic domain $A_0$. In order to show the power
of our computations, let us explore the consequences of changing
$\beta$  into a second Finsler metric in $(F_0,\beta)$-metrics.
\begin{prop}\label{functionpsi2}
Let $\phi:I\subset\R\rightarrow \R$, $\phi>0$ be a smooth
function, and put $\psi (= \phi^2)$, $\varphi_1$, $ \varphi_2$
as in Proposition \ref{functionpsi}. Define the conic
pseudo-Finsler metric:
\[F(v)=F_1(v)\phi\left(\frac{F_2(v)}{F_1(v)}\right)\]
on $A=\{v\in A_0: F_2(v)/F_1(v)\in I\}$, with the convention
$0_p\in A_p(\subset A)$ if and only if
$T_pM\setminus\{0_p\}\subset A_p$, for each $p\in M$. The
fundamental tensor $g$ of $F$ is:
\begin{multline}\label{alphabetatensor2}
 2 g_v(w,w)=\varphi_1\left(\frac{F_2(v)}{F_1(v)}\right)
h^1_v(w,w)  +
 \frac{F_1(v)}{F_2(v)} \dot\psi\left(\frac{F_2(v)}{F_1(v)}\right)h^2_v(w,w)\\
+\frac
12\psi\left(\frac{F_2(v)}{F_1(v)}\right)^{-1}\varphi_2\left(\frac{F_2(v)}{F_1(v)}\right)
\left(\frac{F_2(v)}{F_1(v)^2}g^1_v(v,w)-\frac{g^2_v(v,w)}{F_2(v)}\right)^2\\
+\frac 12\psi\left(\frac{F_2(v)}{F_1(v)}\right)^{-1}
\left(\varphi_1\left(\frac{F_2(v)}{F_1(v)}\right)\frac{g^1_v(v,w)}{F_1(v)}+\dot\psi\left(\frac{F_2(v)}{F_1(v)}\right)\frac{g^2_v(v,w)}{F_2(v)}\right)^2,
\end{multline}
for any $v\in A \setminus 0$ and $w\in T_{\pi(v)}M$.
 Moreover, $F$ is a conic Finsler metric on $A$, if $\dot \psi \geq 0$ and Eq.
 \eqref{eqpsi} is satisfied (i.e. $\varphi_1>0, \varphi_2\geq 0$).

\end{prop}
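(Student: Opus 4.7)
The plan is to apply Theorem \ref{central} directly in the case $n=2$, $m=0$, with the function $L(a,b) = a^2\psi(b/a)$ which is positively homogeneous of degree $2$ on the conic domain $\{(a,b)\in\R^2\setminus\{(0,s):s\in\R\} : b/a\in I\}$ and smooth there (since $\psi=\phi^2>0$). Then $F^2(v) = L(F_1(v),F_2(v))$ exactly as required, and the conic domain $A=\{v\in A_0 : F_2(v)/F_1(v)\in I\}$ is the pullback through $(F_1,F_2)$ of that domain, with the zero convention imposed at points where $A_p$ already contains all nonzero vectors.

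First I would reuse the derivatives of $L$ computed in the proof of Proposition \ref{functionpsi}, namely $L_{,1}(a,b) = 2a\psi(s)-b\dot\psi(s)=a\varphi_1(s)$, $L_{,2}(a,b)=a\dot\psi(s)$, and the same $L_{,11}, L_{,12}, L_{,22}$, with $s=b/a$. Substituting into the master formula \eqref{fundamentalTensor} with $n=2, m=0$ gives
\[
2g_v(w,w) = \frac{L_{,1}}{F_1(v)}h_v^1(w,w) + \frac{L_{,2}}{F_2(v)}h_v^2(w,w) + P(v,w)\,\mathrm{Hess}(L)\,P(v,w)^T,
\]
where $P(v,w) = (g_v^1(v,w)/F_1(v),\, g_v^2(v,w)/F_2(v))$. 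The first coefficient is $\varphi_1(s)$ and the second is $(F_1(v)/F_2(v))\dot\psi(s)$, yielding the first line of \eqref{alphabetatensor2}. For the Hessian quadratic form I would invoke the algebraic identity \eqref{hessianoL} from the proof of Proposition \ref{functionpsi} (valid for the same $L$), applied to $(x_1,x_2) = (g_v^1(v,w)/F_1(v), g_v^2(v,w)/F_2(v))$: the combination $sx_1 - x_2$ becomes $F_2(v)g_v^1(v,w)/F_1(v)^2 - g_v^2(v,w)/F_2(v)$, producing the second and third lines of \eqref{alphabetatensor2}.

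For the positive definiteness under $\dot\psi\geq 0$, $\varphi_1>0$, $\varphi_2\geq 0$, I would verify conditions $(A)$, $(B)$, $(C)$ of Theorem \ref{central}. Condition $(A)$: $L_{,1} = F_1(v)\varphi_1 > 0$ and $L_{,2} = F_1(v)\dot\psi\geq 0$. Condition $(B)$: the identity \eqref{hessianoL} displays $\mathrm{Hess}(L)$ as a sum of two squares scaled by $\varphi_2/(2\psi)\geq 0$ and $1/(2\psi)>0$, hence positive semi-definite. Condition $(C)$: $L_{,1} + L_{,2} = F_1(v)(\varphi_1 + \dot\psi) > 0$. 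Strict positive-definiteness of $g_v(w,w)$ then follows from the argument given in the last paragraph of the proof of Theorem \ref{central} (if $w$ is not proportional to $v$ then $h_v^1(w,w)>0$ makes the first term strictly positive).

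The only genuinely new ingredient compared with Proposition \ref{functionpsi} is the extra angular-metric term $\frac{F_1(v)}{F_2(v)}\dot\psi\, h_v^2(w,w)$, which arises because $F_2$ now contributes to the first sum of \eqref{fundamentalTensor} (whereas the one-form $\beta$ in Proposition \ref{functionpsi} contributed only through $P(v,w)$ and the Hessian). This is exactly what forces the additional hypothesis $\dot\psi\geq 0$, and identifying this point is the only subtlety; the rest of the proof is a direct transcription of the computation for $(F_0,\beta)$-metrics.
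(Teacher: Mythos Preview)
Your proposal is correct and follows exactly the approach indicated in the paper: apply Theorem \ref{central} with $n=2$, $m=0$ and the same function $L(a,b)=a^2\psi(b/a)$ as in Proposition \ref{functionpsi}, noting the additional derivative $L_{,2}(a,b)=a\dot\psi(b/a)$ which produces the new angular term in $h^2_v$ and forces the extra hypothesis $\dot\psi\geq 0$. The paper's proof is a two-line remark to this effect, and you have simply spelled out the details it leaves implicit.
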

\begin{proof}
The proof is analogous to that of Proposition
\ref{functionpsi}. Just observe that $L_{,2}(a,b)=a\dot\psi(b/a)$
and apply Theorem \ref{central}  as in Proposition
\ref{functionpsi}.
\end{proof}

\begin{cor}\label{GenMatsumoto2}
 For any $q\neq 0$, consider the conic pseudo-Finsler metric:
$$F=F_1^{q+1}/|F_1-F_2|^q, \quad A=\{v\in A_0 :
F_1(v)\neq F_2(v)\}.$$
 Then, its
 fundamental tensor is given by

\begin{multline}
\left(\frac{F_1(v)-F_2(v)}{F_1(v)}\right)^{2q+2}g_v(w,w)=\\
\frac{(F_1(v)-F_2(v))(F_1(v)-(q+1)F_2(v))}{F_1(v)^2}h^1_v(w,w)\nonumber\\
+q \frac{F_1(v)(F_1(v)-F_2(v))}{F_2(v)^2}  h^2_v(w,w)\\
+q(q+1)\left(\frac{F_2(v)}{F_1(v)^2}g^1_v(v,w)-\frac{g^2_v(v,w)}{F_2(v)}\right)^2\nonumber\\
 +\left(\frac{F_1(v)-(q+1)F_2(v)}{F_1(v)^2}g^1_v(v,w)+q\frac{g^2_v(v,w)}{F_2(v)}\right)^2,
\end{multline}
where $v\in A$ and $w\in
T_{\pi(v)}M$.

 When $q>0$ 
 the restriction of $F$ to
$$A^*=\{v\in A_0 : F_1(v)-(q+1)F_2(v)>0 \, \hbox{and} \, F_1(v)-F_2(v)>0\}.$$
is conic Finsler. 
\end{cor}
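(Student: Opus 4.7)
The plan is to reduce the corollary to a direct application of Proposition \ref{functionpsi2}, in perfect analogy with the passage from Proposition \ref{functionpsi} to Corollary \ref{GenMatsumoto} (the one-form Matsumoto case). First I would rewrite
\[F = \frac{F_1^{q+1}}{|F_1-F_2|^q} = F_1\cdot\left|1-\frac{F_2}{F_1}\right|^{-q},\]
so that $F = F_1\,\phi(F_2/F_1)$ for the choice $\phi(s) = |1-s|^{-q}$, defined on $I = \R\setminus\{1\}$. Correspondingly $\psi(s) = \phi(s)^2 = |1-s|^{-2q}$, and a routine differentiation together with the definitions \eqref{psipsi} gives
\[\dot\psi(s) = \frac{2q(1-s)}{|1-s|^{2q+2}},\quad \varphi_1(s) = \frac{2(1-s)(1-(q+1)s)}{|1-s|^{2q+2}},\quad \varphi_2(s) = \frac{4q(q+1)}{|1-s|^{4q+2}}.\]

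Next I would substitute these expressions into the fundamental-tensor formula \eqref{alphabetatensor2} of Proposition \ref{functionpsi2} at $s = F_2(v)/F_1(v)$, and multiply both sides by the factor $|1-s|^{2q+2} = ((F_1(v)-F_2(v))/F_1(v))^{2q+2}$. The four summands of \eqref{alphabetatensor2} then produce, after cancellation of powers of $|1-s|$, the four terms displayed in the statement: the $h^1_v$-coefficient comes from $\varphi_1$, the $h^2_v$-coefficient from $(F_1/F_2)\dot\psi$, the first squared bracket from the $\psi^{-1}\varphi_2$ piece, and the second squared bracket from the $\psi^{-1}(\varphi_1\cdot + \dot\psi\cdot)^2$ piece. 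This step is purely computational bookkeeping and contains no conceptual difficulty.

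For the strong convexity on $A^*$, I would simply verify the sufficient conditions of Proposition \ref{functionpsi2}, namely $\varphi_1 > 0$, $\varphi_2 \geq 0$ and $\dot\psi \geq 0$. When $q > 0$, the definition of $A^*$ forces $s = F_2/F_1 \in (0, 1/(q+1))$, so in particular $1-s > 0$ and $1-(q+1)s > 0$; inspection of the three formulas above yields $\dot\psi(s) > 0$, $\varphi_2(s) > 0$, and $\varphi_1(s) > 0$ simultaneously. Proposition \ref{functionpsi2} therefore asserts that $g_v$ is positive definite on $A^*$, and $F|_{A^*}$ is a conic Finsler metric. The main (though minor) obstacle I anticipate is tracking the sign of $1-s$ under the absolute value: however, $F$ depends on $|F_1-F_2|^q$ so is insensitive to this sign, and on $A^*$ the condition $F_1 > F_2$ fixes the branch $1-s>0$, so the issue dissolves.
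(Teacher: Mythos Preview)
Your proposal is correct and follows exactly the approach the paper takes: applying Proposition \ref{functionpsi2} with $\phi(s)=|1-s|^{-q}$ in direct analogy with Corollary \ref{GenMatsumoto}, and then checking that on $A^*$ (for $q>0$) the three sufficient conditions $\varphi_1>0$, $\varphi_2\geq 0$, $\dot\psi\geq 0$ hold. The paper's own proof is just a two-sentence pointer to this computation, singling out (as you do) the extra constraint $\dot\psi\geq 0$---equivalently $F_1>F_2$---which did not appear in the $(F_0,\beta)$ case.
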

\begin{proof}
Apply Proposition \ref{functionpsi2} following the same lines as
in Corollary \ref{GenMatsumoto}. Notice that we have to ensure now
$F_1>F_2$ in the definition of $A^*$ because, otherwise,
$\dot\psi(s)$ is not positive (recall the third line in the
expression of $g_v$).
\end{proof}

\subsection{Appendix}\label{sA}
Let us remark that in \cite[Lemma 1.1.2]{ChSh05}, the authors
obtained a result closely related to our Proposition
\ref{functionpsi} for $(\alpha,\beta)$-metrics, concretely:

\begin{crit}[Chern and Shen, \cite{ChSh05}]\label{shenlemma}
$F=\alpha \phi(\beta/\alpha)$  is a Minkowski
norm for any Riemannian metric
$\alpha$ and 1-form $\beta$ with $\|\beta\|_\alpha<b_0$ if and
only if $\phi=\phi(s)>0$ satisfies the following conditions:
\begin{equation}
(\phi(s)-s\dot\phi(s))+(b^2-s^2)\ddot\phi(s)>0,
\label{crit}\end{equation} where $s$ and $b$ are arbitrary numbers
with $|s|\leq b<b_0$, and $\phi$ is defined in a symmetric interval
$(-b_0,b_0)$.
\end{crit}

\begin{rem}\label{rcrit} To compare with  Proposition \ref{functionpsi}, recall
that there,  the hypotheses \eqref{eqpsi} on $\phi$ were
$\phi(s)-s\dot\phi(s)>0$ and $\ddot \phi(s)\geq 0$. Clearly, these
two hypotheses imply \eqref{crit}. Conversely, \eqref{crit}
implies the first of the two hypotheses (choose $b=s$, and notice
that any $b\leq \|\beta\|_\alpha$ can be chosen), but it is
somewhat weaker than the second one. The reason is that the
criterion above assumes
$\|\beta\|_\alpha<b_0$ but no assumption on $\beta$ was done in
Proposition \ref{functionpsi}. However, even in this case, the
criterion can be applied to give our sufficient condition. Namely,
taking an increasing sequence of compact neighborhoods $\{K_j\}$
which exhausts $M$, for each $K_j$ there exists a constant $b_j$
which plays the role of $b_0$ and, if $\{b_j\}\rightarrow \infty$
then condition \eqref{crit} splits into the two conditions of
Proposition \ref{functionpsi}.

For the sake of completeness, Criterion \ref{shenlemma} will be
reobtained next for $(F_0,\beta)$-metric, and its hypothseses will
be stated  in a more accurate way (see Remark \ref{rult}). We will
follow the approach in previous references on the topic computing
the determinant of the matrix $(g_v)_{ij}$ explicitly.
\end{rem}

Let us fix a
coordinate system $\varphi:U\rightarrow \bar{U}\subset \R^N$ and
denote $\frac{\partial}{\partial
x^1},\ldots,\frac{\partial}{\partial x^N}$, the vector fields
associated to the system. Moreover, $g_{ij}(v)$ and $g^0_{ij}(v)$
will denote respectively the coordinates of the fundamental
tensors $g_v$ and $g_v^0$ associated to $F=\sqrt{L(F_0,\beta)}$
and $F_0$ respectively. Then from \eqref{fundamentalTensor} it
follows that
\begin{equation}\label{FTcoordinates}
2g_{ij}(v)=\frac{L_{,1}}{F_0}\left(g^0_{ij}(v)-\frac{1}{F_0(v)^2}v_iv_j\right)+\frac{L_{,11}}{F_0(v)^2}v_iv_j+\frac{L_{,12}}{F_0(v)}(v_ib_j+v_jb_i)+L_{,22}b_ib_j,
\end{equation}
where $v=\sum_{i=1}^N v^i\frac{\partial}{\partial x^i}$, $v_i=\sum_{j=1}^N
g^0_{ij}(v)v^j$ and $b_i=\omega(\frac{\partial}{\partial x^i})$, $i=1,\ldots,N$.
Our goal is to compute the determinant of the matrix
$(g_{ij}(v))$. We will need the following algebraic result whose
proof can be found in \cite[Proposition 30.1]{Mat86} (for
symmetric matrices) or in \cite[Proposition 11.2.1]{BaChSh00}.
\begin{lemma}\label{algebraiclemma}
Let $P=(p_{ij})$ and $Q=(q_{ij})$ be $n\times n$ real matrices and
$C=(c_i)$ be an $n$-vector. Assume that $Q$ is invertible with
$Q^{-1}=(q^{ij})$, and $p_{ij}=q_{ij}+\delta c_ic_j.$ Then
\[\det(p_{ij})=(1+\delta c^2)\det (q_{ij}),\]
where $c:=\sqrt{\sum_{i,j=1}^nq^{ij}c_ic_j}$. Therefore, if $1+\delta
c^2\not=0$, then $P$ is invertible,  and its  inverse matrix
$P^{-1}=(p^{ij})$ is given by
\[p^{ij}=q^{ij}-\frac{\delta c^ic^j}{1+\delta c^2},\]
where $c^i:=\sum_{j=1}^n q^{ij}c_j$.
\end{lemma}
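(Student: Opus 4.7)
The plan is to recognize this as the classical matrix determinant lemma together with the Sherman--Morrison formula. Writing $C$ as a column vector, the hypothesis $p_{ij}=q_{ij}+\delta c_ic_j$ is exactly $P=Q+\delta\, CC^T$, i.e.\ $P$ is a rank-one update of $Q$. The scalar $c^2=\sum_{i,j}q^{ij}c_ic_j$ is the quadratic form $C^TQ^{-1}C$, and the vector $(c^i)=(\sum_j q^{ij}c_j)$ is $Q^{-1}C$.

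For the determinant part, I would factor $P=Q\,(I+\delta Q^{-1}CC^T)$, so that $\det P=\det Q\cdot\det(I+\delta Q^{-1}CC^T)$. The factor on the right is a rank-one perturbation of the identity and can be evaluated by the standard identity $\det(I_n+uv^T)=1+v^Tu$ (for instance, via the Weinstein--Aronszajn formula $\det(I_n+AB)=\det(I_m+BA)$ applied to $A=\delta Q^{-1}C$ and $B=C^T$, reducing to a $1\times 1$ determinant). With $u=\delta Q^{-1}C$ and $v=C$ this gives
\[
\det(I+\delta Q^{-1}CC^T)=1+\delta\, C^TQ^{-1}C=1+\delta c^2,
\]
whence $\det P=(1+\delta c^2)\det Q$, as claimed.

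For the inverse, I would simply verify that the proposed matrix works, using the ansatz $p^{ij}=q^{ij}-\dfrac{\delta c^i c^j}{1+\delta c^2}$ suggested by Sherman--Morrison. Multiplying out,
\[
\sum_{j}p_{ij}p^{jk}=\sum_{j}(q_{ij}+\delta c_ic_j)\Bigl(q^{jk}-\tfrac{\delta c^j c^k}{1+\delta c^2}\Bigr),
\]
and the computation collapses upon using the three identities $\sum_j q_{ij}q^{jk}=\delta_i^{\,k}$, $\sum_j q_{ij}c^j=c_i$ (which follows from $c^j=\sum_\ell q^{j\ell}c_\ell$), and $\sum_j c_jq^{jk}=c^k$, together with $\sum_j c_j c^j=c^2$. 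One obtains
\[
\delta_i^{\,k}-\tfrac{\delta c_i c^k}{1+\delta c^2}+\delta c_ic^k-\tfrac{\delta^2 c^2\, c_ic^k}{1+\delta c^2}=\delta_i^{\,k},
\]
after a one-line simplification of the three non-Kronecker terms over the common denominator $1+\delta c^2$.

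There is nothing conceptually delicate here; the only real obstacle is keeping the index bookkeeping straight and distinguishing cleanly between the covariant components $c_i$ and the contravariant ones $c^i=\sum_j q^{ij}c_j$. Once one notices that $C^TQ^{-1}C$ is precisely $c^2$ and that $Q^{-1}C$ has components $c^i$, both parts of the statement reduce to the standard rank-one update formulas, and a direct verification suffices; no non-degeneracy of $\delta$ or $C$ is needed for the determinant identity, only $1+\delta c^2\neq 0$ for the inversion.
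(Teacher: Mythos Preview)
Your proof is correct. The paper does not give its own proof of this lemma; it simply cites \cite[Proposition~30.1]{Mat86} and \cite[Proposition~11.2.1]{BaChSh00}. Your argument via the factorization $P=Q(I+\delta Q^{-1}CC^T)$ together with the identity $\det(I+uv^T)=1+v^Tu$, followed by a direct verification of the Sherman--Morrison formula, is the standard self-contained route and is fully adequate here.
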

We remark that in \cite{SaShi,SaShiErr} similar computations to
those of the following lemma have been carried out for
$(\alpha,\beta)$-metrics.
\begin{lemma}\label{determinant}
Denote $\Delta_L=\det( {\rm Hess}(L))$. Then
\begin{multline}\label{detgij}
\det
(g_{ij}(v))=\frac{L_{,1}^{N-2}}{2^NF_0(v)^{N+1}}\left(F_0(v)\Delta_L(F_0(v)^2
\|\beta\|_{g_v^0}^2-\beta(v)^2) +2L_{,1} L\right) \det
(g^0_{ij}(v)).
\end{multline}
\end{lemma}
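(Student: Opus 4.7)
The strategy is to view the formula \eqref{FTcoordinates} for $2g_{ij}(v)$ as a rank-two perturbation of $\frac{L_{,1}}{F_0}\,g^0_{ij}(v)$, and then apply the matrix determinant lemma (Sylvester's identity) to reduce the problem to computing a $2\times 2$ determinant, with Euler's homogeneity relations for $L$ doing the heavy algebraic simplification.

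More precisely, I would first rewrite \eqref{FTcoordinates} in the form
\[
2g_{ij}(v) \;=\; \frac{L_{,1}}{F_0(v)}\,g^0_{ij}(v) \;+\; (U\, M\, U^{T})_{ij},
\]
where $U$ is the $N\times 2$ matrix whose columns are $(v_i)$ and $(b_i)$, and $M$ is the symmetric $2\times 2$ matrix
\[
M \;=\; \begin{pmatrix} \dfrac{L_{,11}}{F_0^{2}}-\dfrac{L_{,1}}{F_0^{3}} & \dfrac{L_{,12}}{F_0} \\[2mm] \dfrac{L_{,12}}{F_0} & L_{,22} \end{pmatrix}.
\]
Then I would factor out $\frac{L_{,1}}{F_0}$ and apply Sylvester's identity $\det(I_N+AB)=\det(I_k+BA)$ with $k=2$, which gives
\[
\det(2g_{ij}) \;=\; \left(\tfrac{L_{,1}}{F_0}\right)^{\!N}\det(g^0_{ij})\,\det\!\left(I_2+\tfrac{F_0}{L_{,1}}\,M\,Q\right),
\]
where $Q = U^{T}(g^0)^{-1}U$. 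The entries of $Q$ are precisely the $g_v^0$-dual pairings of the covectors $v_i$ and $b_i$, namely
\[
Q \;=\; \begin{pmatrix} F_0(v)^{2} & \beta(v) \\ \beta(v) & \|\beta\|_{g_v^0}^{2}\end{pmatrix}.
\]

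Next I would expand the $2\times 2$ determinant via $\det(I_2+P)=1+\operatorname{tr}(P)+\det(P)$ with $P=\frac{F_0}{L_{,1}}MQ$. For this I use the homogeneity of $L$, i.e.\ Euler's relations
\[
F_0 L_{,1}+\beta(v) L_{,2}=2L,\qquad F_0 L_{,11}+\beta(v) L_{,12}=L_{,1},\qquad F_0 L_{,12}+\beta(v) L_{,22}=L_{,2},
\]
which are just derivatives of the homogeneity condition (b). These collapse the trace term into $\tfrac{2L}{L_{,1}F_0}-\tfrac{L_{,22}}{L_{,1}F_0}(F_0^{2}\|\beta\|_{g_v^0}^{2}-\beta(v)^{2})$, while the determinant term gives $\det(M)\det(Q)(F_0/L_{,1})^{2}$; computing $\det(M)=\frac{\Delta_L}{F_0^{2}}-\frac{L_{,1}L_{,22}}{F_0^{3}}$ shows that the unwanted $L_{,22}$-contributions cancel, leaving
\[
\det(I_2+P) \;=\; \frac{1}{L_{,1}^{2}F_0}\Bigl(2LL_{,1}+F_0\Delta_L\bigl(F_0^{2}\|\beta\|_{g_v^0}^{2}-\beta(v)^{2}\bigr)\Bigr).
\]
Substituting back and dividing by $2^N$ produces \eqref{detgij}.

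The main obstacle is the bookkeeping in the $2\times 2$ expansion: one has to track the $L_{,22}$-terms coming from the trace and the determinant carefully so that they cancel, and one must apply Euler's identity at exactly the right two places (once to kill the $P_{11}$ entry and once to recognize $F_0L_{,1}+\beta(v)L_{,2}=2L$). Everything else is formal matrix algebra once the rank-two structure of \eqref{FTcoordinates} is made explicit.
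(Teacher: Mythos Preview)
Your argument is correct and yields \eqref{detgij}; it differs from the paper's proof only in organization. The paper first completes the square in the rank-two perturbation, writing
\[
g_{ij}(v)=\tfrac{L_{,1}}{2F_0}\bigl(g^0_{ij}+\delta\,c_ic_j+\mu\,v_iv_j\bigr),\qquad c_i=b_i+k\,v_i,
\]
with scalars $\delta,k,\mu$ built from the $L_{,rs}$, and then applies the rank-one Lemma~\ref{algebraiclemma} twice in succession, finishing with the single homogeneity identity $F_0^2L_{,11}+2F_0\beta L_{,12}+\beta^2L_{,22}=2L$. You instead keep the rank-two structure intact and apply Sylvester's identity once, letting the three Euler relations for $L$ do the simplification inside the resulting $2\times 2$ determinant. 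Your route avoids the preliminary square-completion and the introduction of the auxiliary quantities $\delta,k,\mu$; the paper's route needs only the elementary rank-one determinant formula and a single use of homogeneity. Both require $L_{,1}\neq 0$ as a formal hypothesis for the factoring step.

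One small slip in your narrative: the trace of $P$ is actually $\tfrac{2L}{L_{,1}F_0}-1+\tfrac{L_{,22}}{L_{,1}F_0}(F_0^2\|\beta\|_{g_v^0}^2-\beta^2)$, not the expression you wrote (the sign on the $L_{,22}$-term is $+$, and there is a $-1$ that cancels the leading $1$ in $1+\operatorname{tr}P+\det P$). This does not affect the outcome, since the $L_{,22}$-contributions from $\operatorname{tr}P$ and $\det P$ still cancel and the final formula for $\det(I_2+P)$ is exactly as you state.
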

\begin{proof}
First observe that
\[g_{ij}(v)=\frac{L_{,1}}{2F_0(v)}(g^0_{ij}(v)+\delta(v) (b_i+k(v) v_i)(b_j+k(v) v_j)+\mu(v) v_iv_j),\]
where
\begin{align*}
&\delta(v)=\frac{L_{,22} F_0(v)}{L_{,1}},&
k(v)=\frac{L_{,12}}{L_{,22}F_0(v)}&& \text{and}
&&\mu(v)=\frac{\Delta_L}{L_{,1}L_{,22}F_0(v)}-\frac{1}{F_0(v)^2}.
\end{align*}
If we call $c_i=b_i+k(v) v_i$ and
$g^0(v)^{ij}=(g^0_{ij})^{-1}(v)$, applying twice Lemma
\ref{algebraiclemma}, we obtain that
\begin{multline*}
\det(g_{ij}(v))=\frac{
L_{,1}^N}{2^NF_0(v)^N}((1+\mu(v)\sum_{i,j= 1}^N
g^0(v)^{ij}v_iv_j)(1+\delta(v)
c^2)\\-\mu(v)\delta(v)\sum_{i,j=1}^Nc^ic^jv_iv_j) \det (g^0_{ij}(v)),
\end{multline*}
where
\begin{multline*}
 c^2=\sum_{i,j=1}^N g^0(v)^{ij}c_ic_j=\sum_{i,j=1}^Ng^0(v)^{ij}(b_i+k(v) v_i)(b_j+k(v) v_j)\\=\|\beta\|^2_{g_v^0}+2k(v) \beta(v)+k(v)^2 F_0(v)^2,
 \end{multline*}
$\sum_{i,j= 1}^N g^0(v)^{ij} v_iv_j=F_0(v)^2$ and
\[\sum_{i,j=1}^N c^ic^j v_iv_j=\sum_{i,j=1}^N(b^i+k(v) v^i)(b^j+k(v) v^j)v_iv_j=
(\beta(v)+k(v) F_0(v)^2)^2.\]
Then, formula \eqref{detgij}
follows
 substituting $\delta(v)$, $k(v)$ and $\mu(v)$ by their values,
and making some straightforward computations (use
\[F_0(v)^2 L_{,11}+\beta(v)^2L_{,22}+2\beta(v)F_0(v)L_{,12}=2L,\]
which follows from evaluating \eqref{fundamentalTensor} in $w=v$ and
recalling \eqref{propfundtensor}).
\end{proof}
\begin{prop}\label{pdetg}
If $F=F_0\phi(\beta/F_0)$, then $\det(g_{ij}(v))$ is equal to
\begin{multline}\label{detphi}
\left(\phi-\frac{\beta(v)}{F_0(v)}\dot\phi\right)^{N-2}\left(\left(\|\beta\|_{g_v^0}^2-\frac{\beta(v)^2}{F_0(v)^2}\right)\ddot\phi 
+\phi-\frac{\beta(v)}{F_0(v)}\dot\phi\right)\phi^{N+1} \det
(g^0_{ij}(v)).
\end{multline}
\end{prop}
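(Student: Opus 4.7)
The plan is to apply Lemma \ref{determinant} directly to $L(a,b) = a^2 \psi(b/a)$ with $\psi = \phi^2$, evaluated at $a = F_0(v)$, $b = \beta(v)$, and then simplify using the definitions of $\varphi_1$ and $\varphi_2$ from \eqref{psipsi}.

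First, I would record the partial derivatives already computed in the proof of Proposition \ref{functionpsi}: with $s = b/a$,
\[
L_{,1} = a\bigl(2\psi - s\dot\psi\bigr) = a\,\varphi_1(s), \quad L_{,11} = 2\psi - 2s\dot\psi + s^2\ddot\psi, \quad L_{,12} = \dot\psi - s\ddot\psi, \quad L_{,22} = \ddot\psi.
\]
A direct expansion gives
\[
\Delta_L = L_{,11}L_{,22} - L_{,12}^{\,2} = 2\psi\ddot\psi - \dot\psi^{\,2} = \varphi_2(s),
\]
(the cross terms cancel), and the relation $\varphi_2 = 4\phi^3\ddot\phi$ is already noted in \eqref{psipsi}. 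In particular $L_{,1} = 2F_0\phi(\phi - s\dot\phi)$ and $L = F_0^{\,2}\phi^2$ at the point under consideration.

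Next I would substitute these four quantities into the formula of Lemma \ref{determinant}. The prefactor becomes
\[
\frac{L_{,1}^{N-2}}{2^N F_0^{N+1}} = \frac{\bigl(2F_0\phi(\phi - s\dot\phi)\bigr)^{N-2}}{2^N F_0^{N+1}} = \frac{\phi^{N-2}(\phi - s\dot\phi)^{N-2}}{4\, F_0^{\,3}},
\]
while the bracket factor can be rewritten, after pulling out $4F_0^{\,3}\phi^3$, as
\[
F_0\Delta_L\bigl(F_0^{\,2}\|\beta\|_{g_v^0}^{2} - \beta(v)^2\bigr) + 2L_{,1}L = 4F_0^{\,3}\phi^3\Bigl[\ddot\phi\bigl(\|\beta\|_{g_v^0}^{2} - s^2\bigr) + (\phi - s\dot\phi)\Bigr].
\]
Multiplying the two, the $4F_0^{\,3}$ factors cancel and one reaches the stated expression \eqref{detphi} upon restoring $s = \beta(v)/F_0(v)$.

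The only conceptual step is recognizing the two key collapses: $\Delta_L = \varphi_2 = 4\phi^3\ddot\phi$, and the clean factorization $L_{,1} = 2F_0\phi(\phi - s\dot\phi)$ which lets the powers of $\phi$ and $(\phi - s\dot\phi)$ aggregate exactly to $\phi^{N+1}$ and $(\phi - s\dot\phi)^{N-2}$. I do not foresee any obstacle; the rest is routine bookkeeping of exponents and homogeneity.
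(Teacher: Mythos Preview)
Your proposal is correct and follows essentially the same approach as the paper: both proofs plug the partial derivatives of $L$ computed in Proposition~\ref{functionpsi} into the determinant formula of Lemma~\ref{determinant} and simplify. The paper writes the intermediate expression first in terms of $\psi$ and then substitutes $\psi=\phi^2$, $\dot\psi=2\phi\dot\phi$, $\ddot\psi=2(\dot\phi^2+\phi\ddot\phi)$, whereas you pass to $\phi$ slightly earlier via the identities $\varphi_1=2\phi(\phi-s\dot\phi)$ and $\varphi_2=4\phi^3\ddot\phi$; this is only a difference in bookkeeping.
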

\begin{proof}
 Using the expressions of the partial derivatives of $L$ computed
in Proposition \ref{functionpsi}, Equation \eqref{detgij} becomes,
 in terms of $\psi=\phi^2$,
\begin{multline*}
\det (g_{ij}(v))=\frac{(2F_0(v)\psi-\beta
\dot\psi)^2}{2^NF_0(v)^{N+1}}((2\psi\ddot\psi-(\dot\psi)^2)F_0(v)(F_0(v)^2\|\beta\|^2_{g_v^0}-\beta(v)^2)\\+(2F_0(v)\psi-\beta(v)\dot\psi)2\psi
F_0(v)^2).
\end{multline*}
Then substituting $\psi=\phi^2$, $\dot\psi=2\phi\dot\phi$ and
$\ddot\psi=2((\dot\phi)^2+\phi\ddot\phi)$ and making some
straightforward computations we obtain \eqref{detphi}.
\end{proof}
\begin{cor}\label{characterization}
Let $F=F_0\phi(\beta/F_0)$, choose $v_0\in A\setminus 0$ (see
Proposition \ref{functionpsi}), and put $s_0 =\beta(v_0)/F_0(v_0)$.
In the case of dimension $N>2$, the fundamental tensor $g_{v_0}$
is positive definite if and only if
\begin{align}\label{etomashen}
\phi(s_0)-s_0\dot\phi(s_0)>0,\\
\ddot\phi(s_0)\left(\|\beta\|_{g_{v_0}^0}^2-s_0^2\right) +\phi(s_0)-s_0
\dot\phi(s_0)>0
\end{align}
and, in the case of dimension $N=2$, $g_{v_0}$ is positive
definite if and only if the second  inequality holds.
\end{cor}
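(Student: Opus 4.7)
The plan is to diagonalize the structure of $g_{v_0}$ by choosing an adapted basis, reducing positive definiteness to two elementary conditions on a $2\times 2$ block and on an $(N-2)\times(N-2)$ block. First, I would pick a $g^0_{v_0}$-orthonormal basis $e_1,\dots,e_N$ of $T_{\pi(v_0)}M$ with $e_1=v_0/F_0(v_0)$ and with the $g^0_{v_0}$-dual $z$ of $\beta$ contained in $\mathrm{span}\{e_1,e_2\}$. Writing $z=\lambda^1 e_1+\lambda^2 e_2$, one has $\lambda^1=\beta(e_1)=s_0$ and $(\lambda^2)^2=\|\beta\|_{g^0_{v_0}}^2-s_0^2$. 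Substituting into formula \eqref{ejemejem} from the proof of Proposition \ref{reciproco} (equivalently, specializing \eqref{alphabetatensor}), the matrix $(2g_{v_0})_{ij}$ in this basis is block diagonal: a $2\times 2$ block $M$ on $\mathrm{span}\{e_1,e_2\}$ and the scalar block $(L_{,1}/F_0(v_0))\, I_{N-2}$ on $\mathrm{span}\{e_3,\dots,e_N\}$.

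Next I would read off the two critical quantities. Since $L=F_0^{\,2}\psi(\beta/F_0)$ with $\psi=\phi^{2}$, a short computation gives $L_{,1}/F_0=2\psi-s\dot\psi=2\phi(\phi-s\dot\phi)$, so the scalar block is positive iff condition (i), $\phi(s_0)-s_0\dot\phi(s_0)>0$, holds. Moreover, because $g_{v_0}(v_0,v_0)=F(v_0)^2=F_0(v_0)^2\phi(s_0)^2$, one has $M_{11}=2g_{v_0}(e_1,e_1)=2\phi(s_0)^2>0$ automatically.

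The key step is to compute $\det(M)$, which I would obtain without expansion by invoking Proposition \ref{pdetg} rather than plodding through the $L_{,ij}$. Since $\det(g^0_{ij}(v_0))=1$ in the chosen basis and the full matrix is block diagonal,
\[
2^{N}\det(g_{ij}(v_0))=\det(M)\cdot\bigl(L_{,1}/F_0(v_0)\bigr)^{N-2}.
\]
Plugging in \eqref{detphi} and $L_{,1}/F_0=2\phi(\phi-s_0\dot\phi)$, the factor $(\phi-s_0\dot\phi)^{N-2}$ cancels and one arrives at
\[
\det(M)=4\phi(s_0)^{3}\Bigl[\bigl(\|\beta\|_{g^0_{v_0}}^{2}-s_0^{2}\bigr)\ddot\phi(s_0)+\phi(s_0)-s_0\dot\phi(s_0)\Bigr],
\]
so $\det(M)>0$ is exactly the second inequality in \eqref{etomashen}. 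By Sylvester's criterion, $M$ is positive definite iff $M_{11}>0$ and $\det(M)>0$; since $M_{11}=2\phi^2>0$ always, $M$ is positive definite iff condition (ii) holds.

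The conclusion then follows by combining the blocks: for $N>2$, $g_{v_0}$ is positive definite iff both $M$ and the scalar block $(L_{,1}/F_0)I_{N-2}$ are positive definite, which is equivalent to (i) and (ii); for $N=2$ the scalar block disappears and only (ii) is needed. The only delicate point I foresee is bookkeeping the cancellations when connecting $\det(M)$ to formula \eqref{detphi}, but this is mechanical once the block decomposition above has been set up; no new analytic input is required, since all the identities used have been established in Propositions \ref{functionpsi} and \ref{pdetg}.
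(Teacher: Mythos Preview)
Your proof is correct and takes a genuinely different route from the paper's. The paper argues via a homotopy: it deforms $\phi$ through $\phi_t=(1-t)+t\phi$, observes that the two inequalities in \eqref{etomashen} (being affine in $\phi$ and its derivatives) remain valid for every $\phi_t$, and then uses Proposition~\ref{pdetg} to see that $\det(g^t_{ij}(v_0))>0$ for all $t\in[0,1]$. Since $g^0_{v_0}=g^0_{v_0}$ is positive definite and the signature cannot change along a path of non\-degenerate forms, $g_{v_0}=g^1_{v_0}$ is positive definite. The converse in the paper is closer to yours: Proposition~\ref{reciproco} forces the first inequality when $N>2$, and positivity of $\det(g_{ij}(v_0))$ forces the second.

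Your approach is more elementary and fully explicit: the adapted $g^0_{v_0}$-orthonormal basis makes the block structure visible, so Sylvester on the $2\times2$ block plus positivity of the scalar block gives the equivalence directly, with no deformation needed. You also use Proposition~\ref{pdetg}, but only to identify $\det(M)$ cleanly rather than to track a family. The paper's method has the conceptual advantage that it avoids computing individual entries of $M$ and would transplant to situations where diagonalization is less transparent; your method has the advantage of being a single computation with no continuity argument. One small remark: the cancellation of $(\phi-s_0\dot\phi)^{N-2}$ when extracting $\det(M)$ from \eqref{detphi} is legitimate in every case you actually need it (for $N>2$ the scalar block being positive already gives $\phi-s_0\dot\phi>0$; for $N=2$ there is nothing to cancel), so the ``delicate point'' you flag is in fact harmless.
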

\begin{proof}
Define $\phi_t(s)=1-t+t\phi(s)$, in the same domain as $\phi$ for
$t\in[0,1]$. Then, in dimension $N>2$
\begin{align*}
\phi_t(s_0)-s_0\dot\phi_t(s_0)=1-t+t(\phi(s_0)-s_0\dot\phi(s_0))>0,
\end{align*}
and in any dimension $N\geq 2$
\begin{multline*}
\ddot\phi_t(s_0)\left(\|\beta\|_{g_{v_0}^0}^2-s_0^2\right)
+\phi_t(s_0)-s_0\dot\phi_t(s_0)\\=1-t+t\left[\ddot\phi(s_0)\left(\|\beta\|_{g_{v_0}^0}^2-s_0^2\right)
+\phi(s_0)-s_0\dot\phi(s_0)\right]>0.
\end{multline*}
Then, let $F_t(v_0)=F_0(v_0)\phi_t(\beta(v_0)/F_0(v_0))$,
$g_{v_0}^t$,  the fundamental tensor associated to $F_t$, and
$(g_{ij}^t(v_0))$  the matrix of coordinates of $g_{v_0}^t$ in the
 coordinate chart  previously fixed.  Applying
Proposition \ref{pdetg} to each $\phi_t$, one has
$\det(g_{ij}^t(v_0))>0$   for every $t\in [0,1]$. Observe that
for $t=0$, $F_t=F_0$, and, consequently, $(g_{ij}^0(v_0))$ is
positive definite. Then, as $\det(g_{ij}^t(v_0))>0$ for every
$t\in [0,1]$, $g_{ij}^1(v_0)$
must be positive definite. 

For the converse, observe that Proposition \ref{reciproco} implies
that $\phi(s_0)-s_0\dot\phi(s_0)>0$ for $N>2$, and the other inequality
follows for any $N\geq 2$ by using  \eqref{detphi}, as
$\det(g_{ij}(v_0))$ must be positive.
\end{proof}

\begin{rem}\label{rult}
The inequalities in the previous corollary characterize in a
precise way the maximal conic domain $A^*$ where  $F$ is conic
Finsler. In the second inequality, observe that
$\|\beta\|_{g_{v_0}^0}^2-s_0^2\geq 0$ (this  follows directly from
the definition of the norm $\|\beta\|_{g_{v_0}^0}$ and the
equality $F_0(v_0)=\sqrt{g_{v_0}^0(v_0,v_0)}$).

In the case that $F_0=\sqrt{\alpha}$ (i.e. $F$ is a conic
$(\alpha, \beta)$-metric) then $\|\beta\|_{g_{v_0}^0}$ depends
only on the point $p_0=\pi(v_0)$ and we write
$\|\beta\|_{\alpha_{p_0}}$ instead. Moreover, if $F$ is an
$(\alpha, \beta)$-metric (with $A=TM$), then an $\alpha$-unit
vector $v_0$ can be chosen both, in the kernel of $\beta$ and such
that $\|\beta\|_{\alpha_{p_0}} = \beta(v_0)$, and as a
consequence,   $s^2=\beta(v)^2/\alpha(v)^2$, $v\in
A_{p_0}\setminus \{0\}$, takes all the values in the interval
$[0,\|\beta\|_{\alpha_{p_0}}^2]$ (notice that, in this case, $0$
must belong to the domain $I$ of definition of $\phi$). Thus, the
following criterion (to be compared with Criterion
\ref{shenlemma}, taking into account Remark \ref{rcrit})  is
obtained: {\em an $(\alpha,\beta$)-metric $F=\alpha
\phi(\beta/\alpha)$ is a Finsler one if and only if the
inequalities in Corollary \ref{characterization} hold for all
$s\in \R$ such that $0\leq s^2 \leq \|\beta\|_\alpha:=$ {\em
Sup}$_{p\in M}\|\beta\|_{\alpha_{p}}$.}, i.e.:
\begin{align}\label{etomashen2}
\phi(s)-s\dot\phi(s)>0,\\
\ddot\phi(s)\left(\|\beta\|_{\alpha}^2-s^2\right) +\phi(s)-s
\dot\phi(s)>0,
\end{align}
the latter under the convention that, if $\|\beta\|_\alpha=\infty$
then $\ddot \phi\geq 0$ on $I$.

\end{rem}

\section*{Acknowledgments}

The authors warmly acknowledge  Professors R. Bryant, E. Caponio
and G. Siciliano for helpful conversations on the topics of this
paper and the anonimous referee for his interesting comments.

Both authors are partially supported by Regional J. Andaluc\'{\i}a
Grant P09-FQM-4496 with FEDER funds. MAJ  is also partially
supported by MICINN project MTM2009-10418 and Fundaci\'on S\'eneca
project 04540/GERM/06 and MS by MICINN-FEDER Grant MTM2010--18099.

\end{document}